 \def\botcaption#1#2{\medskip\centerline{{\scshape #1.}\kern8pt
 {\rm #2}}\bigskip}
 \newcounter{enunciato}[subsection]
 \newtheorem{ittheorem}{Theorem}
 \newtheorem{itlemma}{Lemma}
 \newtheorem{itproposition}{Proposition}
 \newtheorem{itdefinition}{Definition}
 \newtheorem{itassumption}{Assumption}
 \newtheorem{itremark}{Remark}
 \newtheorem{itclaim}{Claim}
 \newenvironment{theorem}{\addtocounter{enunciato}{1}
 \begin{ittheorem}}{\end{ittheorem}}
 \newenvironment{lemma}{\addtocounter{enunciato}{1}
 \begin{itlemma}}{\end{itlemma}}
 \newenvironment{proposition}{\addtocounter{enunciato}{1}
 \begin{itproposition}}{\end{itproposition}}
 \newenvironment{definition}{\addtocounter{enunciato}{1}
 \begin{itdefinition}}{\end{itdefinition}}
  \newenvironment{assumption}{\addtocounter{enunciato}{1}
 \begin{itassumption}}{\end{itassumption}}
 \newenvironment{remark}{\addtocounter{enunciato}{1}
 \begin{itremark}}{\end{itremark}}
 \newenvironment{claim}{\addtocounter{enunciato}{1}
 \begin{itclaim}}{\end{itclaim}}
 \newenvironment{proof}{\noindent {\bf Proof.\,}
 }{\hspace*{\fill}$\square$\medskip}
 \newcommand{\be}[1]{\begin{equation}\label{#1}}
 \newcommand{\ee}{\end{equation}}
 \newcommand{\bl}[1]{\begin{lemma}\label{#1}}
 \newcommand{\el}{\end{lemma}}
 \newcommand{\br}[1]{\begin{remark}\label{#1}}
 \newcommand{\er}{\end{remark}}
 \newcommand{\bt}[1]{\begin{theorem}\label{#1}}
 \newcommand{\et}{\end{theorem}}
 \newcommand{\bd}[1]{\begin{definition}\label{#1}}
 \newcommand{\ed}{\end{definition}}
 \newcommand{\bcl}[1]{\begin{claim}\label{#1}}
 \newcommand{\ecl}{\end{claim}}
 \newcommand{\bp}[1]{\begin{proposition}\label{#1}}
 \newcommand{\ep}{\end{proposition}}
 \newcommand{\bc}[1]{\begin{corollary}\label{#1}}
 \newcommand{\ec}{\end{corollary}}
 \newcommand{\bpr}{\begin{proof}}
 \newcommand{\epr}{\end{proof}}
 \newcommand{\bi}{\begin{itemize}}
 \newcommand{\ei}{\end{itemize}}
 \newcommand{\ben}{\begin{enumerate}}
 \newcommand{\een}{\end{enumerate}}
 \def\botcaption#1#2{\medskip\centerline{{\scshape #1.}\kern8pt
 {\rm #2}}\bigskip}
 \def \ba {\begin{array}}
 \def \ea {\end{array}}
 \def \Z {{\mathbb Z}}
 \def \R {{\mathbb R}}
 \def \N {{\mathbb N}}
 \def \P {{\mathbb P}}
 \def \E {{\mathbb E}}
 \def \da {\downarrow}
 \def \cO {{\mathcal O}}
 \def \cW {{\mathcal W}}
 \def \cD {{\mathcal D}}
 \def \cL {{\mathcal L}}
 \def \cI {{\mathcal I}}
 \def \cJ {{\mathcal J}}
 \def \cR {{\mathcal R}}
 \def \cA {{\mathcal A}}
 \def \cN {{\mathcal N}}
 \def \cE {{\mathcal E}}
 \def \cP {{\mathcal P}}
 \def \cQ {{\mathcal Q}}
 \def \cF {{\mathcal F}}
 \def \cV {{\mathcal V}}
 \def \cH {{\mathcal H}}
 \def \cT {{\mathcal T}}
 \def \cZ {{\mathcal Z}}
 \def \cS {{\mathcal S}}
 \def \ind {{1}}
 \def \gep {{\varepsilon}}
 \def \DOM {{\hbox{\footnotesize\rm DOM}}}
 \def \CONE {{\hbox{\footnotesize\rm CONE}}}
\begin{document}

\title{On the localized phase of a copolymer in an emulsion:
supercritical percolation regime}

\author{\renewcommand{\thefootnote}{\arabic{footnote}}
F.\ den Hollander
\footnotemark[1]\,\,\,\footnotemark[2]
\\
\renewcommand{\thefootnote}{\arabic{footnote}}
N.\ P\'etr\'elis
\footnotemark[2]
}

\footnotetext[1]
{Mathematical Institute, Leiden University, P.O.\ Box 9512,
2300 RA Leiden, The Netherlands}\,

\footnotetext[2]
{EURANDOM, P.O.\ Box 513, 5600 MB Eindhoven, The Netherlands}

\maketitle

\begin{abstract}
In this paper we study a two-dimensional directed self-avoiding walk model of
a random copolymer in a random emulsion. The copolymer is a random concatenation
of monomers of two types, $A$ and $B$, each occurring with density $\frac{1}{2}$.
The emulsion is a random mixture of liquids of two types, $A$ and $B$, organised
in large square blocks occurring with density $p$ and $1-p$, respectively, where
$p \in (0,1)$. The copolymer in the emulsion has an energy that is minus $\alpha$
times the number of $AA$-matches minus $\beta$ times the number of $BB$-matches,
where without loss of generality the interaction parameters can be taken from
the cone $\{(\alpha,\beta)\in\R^2\colon\,\alpha\geq |\beta|\}$. To make the model
mathematically tractable, we assume that the copolymer is directed and can only
enter and exit a pair of neighbouring blocks at diagonally opposite corners.

In \cite{dHW06}, a variational expression was derived for the quenched free energy
per monomer in the limit as the length $n$ of the copolymer tends to infinity and
the blocks in the emulsion have size $L_n$ such that $L_n \to \infty$ and $L_n/n
\to 0$. Under this restriction, the free energy is self-averaging with respect to
both types of randomness. It was found that in the supercritical percolation regime
$p \geq p_c$, with $p_c$ the critical probability for directed bond percolation on
the square lattice, the free energy has a phase transition along a curve in the cone
that is independent of $p$. At this critical curve, there is a transition from a phase
where the copolymer is fully delocalized into the $A$-blocks to a phase where it is
partially localized near the $AB$-interface. In the present paper we prove three
theorems that complete the analysis of the phase diagram : (1) the critical
curve is strictly increasing; (2) the phase transition is second order; (3) the
free energy is infinitely differentiable throughout the partially localized phase.

In the subcritical percolation regime $p<p_c$, the phase diagram is much more
complex. This regime will be treated in a forthcoming paper.

\vskip 0.5truecm
\noindent
\emph{AMS} 2000 \emph{subject classifications.} 60F10, 60K37, 82B27.\\
\emph{Key words and phrases.} Random copolymer, random emulsion, localization,
delocalization, phase transition, percolation, large deviations.\\
{\it Acknowledgment.} NP is supported by a postdoctoral fellowship from the
Netherlands Organization for Scientific Research (grant 613.000.438).
FdH and NP are grateful to the Pacific Institute for the Mathematical
Sciences and the Mathematics Department of the University of British Columbia,
Vancouver, Canada, for hospitality: FdH from January to August 2006, NP from
mid-March to mid-April 2006 when the work in this paper started.
\end{abstract}


\section{Introduction and main results}
\label{S1}

\subsection{Background}
\label{S1.1}

The problem considered in this paper is the \emph{localization transition} of
a random copolymer near a random interface. Suppose that we have two immiscible
liquids, say, oil and water, and a copolymer chain consisting of two types of
monomer, say, hydrophobic and hydrophilic. Suppose that it is energetically
favourable for monomers of one type to be in one liquid and for monomers of the
other type to be in the other liquid. At high temperatures the copolymer will
delocalize into one of the liquids in order to maximise its entropy, while at
low temperatures energetic effects will dominate and the copolymer will localize
close to the interface between the two liquids, because in this way it is able
to place more than half of its monomers in their preferred liquid. In the
limit as the copolymer becomes long, we may expect a phase transition.

In the literature most attention has focussed on models with a \emph{single
flat infinite} interface or an \emph{infinite array of parallel flat infinite}
interfaces. Relevant references can be found in P\'etr\'elis \cite{P06}. In the
present paper we continue the analysis of a model introduced in den Hollander
and Whittington \cite{dHW06}, where the interface has a \emph{random shape}.
In particular, the situation was considered in which the square lattice is divided
into large blocks, and each block is independently labelled $A$ (oil) or $B$
(water) with probability $p$ and $1-p$, respectively, i.e., the interface has
a \emph{percolation type structure}. This is a primitive model of an emulsion,
consisting of oil droplets dispersed in water (see Figure \ref{fig-copolemul}).

\begin{figure}
\begin{center}
\includegraphics[scale = 0.3]{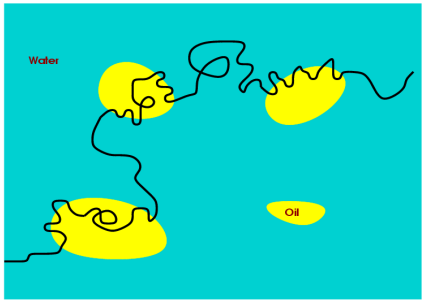}
\end{center}
\caption{An undirected copolymer in an emulsion.}
\label{fig-copolemul}
\end{figure}

The copolymer consists of an i.i.d.\ random concatenation of monomers of type
$A$ (hydrophobic) and $B$ (hydrophilic). It is energetically favourable for
monomers of type $A$ to be in the $A$-blocks and for monomers of type $B$ to
be in the $B$-blocks. Under the restriction that \emph{the copolymer is directed
and can only enter and exit a pair of neighbouring blocks at diagonally opposite
corners}, it was shown that there are phase transitions between phases where
the copolymer is \emph{fully delocalized} away from the interface and phases
where it is \emph{partially localized} near the interface. It turns out that
the phase diagram \emph{does not} depend on $p$ when $p \geq p_c$, the critical
value for directed bond percolation on $\Z^2$, while it \emph{does} depend on
$p$ when $p<p_c$. In the present paper we focus on the supercritical percolation
regime.

Our paper is organised as follows. In the rest of Section \ref{S1} we recall the
definition of the model, state the relevant results from \cite{dHW06}, and formulate
three theorems for the supercritical percolation regime. These theorems are proved in
Sections \ref{S3}, \ref{S4} and \ref{S5}, respectively. Section \ref{S2} recalls
the key variational formula for the free energy, as well as some basic facts
about block pair free energies and path entropies needed along the way.

\subsection{The model}
\label{S1.2}

Each positive integer is randomly labelled $A$ or $B$, with probability $\frac{1}{2}$
each, independently for different integers. The resulting labelling is denoted by
\be{bondlabel}
\omega = \{\omega_i \colon\, i \in \N\} \in \{A,B\}^\N
\ee
and represents the \emph{randomness of the copolymer}, with $A$ denoting a hydrophobic
monomer and $B$ a hydrophilic monomer. Fix $p \in (0,1)$ and $L_n \in \N$. Partition
$\R^2$ into square blocks of size $L_n$:
\be{blocks}
\R^2 = \bigcup_{x \in \Z^2} \Lambda_{L_n}(x), \qquad
\Lambda_{L_n}(x) = xL_n + (0,L_n]^2.
\ee
Each block is randomly labelled $A$ or $B$, with probability $p$, respectively,
$1-p$, independently for different blocks. The resulting labelling is denoted by
\be{blocklabel}
\Omega = \{\Omega(x) \colon\, x \in \Z^2\} \in \{A,B\}^{\Z^2}
\ee
and represents the \emph{randomness of the emulsion}, with $A$ denoting oil and
$B$ denoting water.

Let
\begin{itemize}
\item
$\cW_n =$ the set of $n$-step \emph{directed self-avoiding paths} starting
at the origin and being allowed to move \emph{upwards, downwards and to the right}.
\item
$\cW_{n,L_n} =$ the subset of $\cW_n$ consisting of those paths that enter blocks
at a corner, exit blocks at one of the two corners \emph{diagonally opposite} the
one where it entered, and in between \emph{stay confined} to the two blocks that
are seen upon entering (see Figure \ref{fig-copolemulblock}).
\end{itemize}
The corner restriction, which is unphysical, is put in to make the model mathematically
tractable. We will see that, despite this restriction, the model has physically
relevant behaviour.

\begin{figure}
\begin{center}
\includegraphics[scale = 0.4]{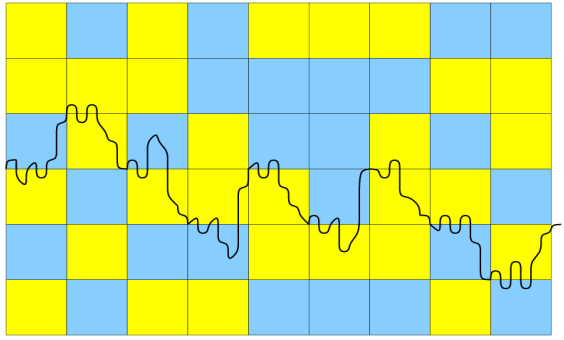}
\end{center}
\caption{A directed self-avoiding path crossing blocks of oil and water diagonally.
The light-shaded blocks are oil, the dark-shaded blocks are water. Each block is
$L_n$ lattice spacings wide in both directions. The path carries hydrophobic and
hydrophilic monomers on the lattice scale, which are not indicated.}
\label{fig-copolemulblock}
\end{figure}


Given $\omega,\Omega$ and $n$, with each path $\pi \in \cW_{n,L_n}$ we associate
an \emph{energy} given by the Hamiltonian
\be{Hamiltonian}
H_{n,L_n}^{\omega,\Omega}(\pi)
= - \sum_{i=1}^n \Big(\alpha\, 1\left\{\omega_i=\Omega^{L_n}_{(\pi_{i-1},\pi_i)}=A\right\}
+ \beta\, 1\left\{\omega_i=\Omega^{L_n}_{(\pi_{i-1},\pi_i)}=B\right\}\Big),
\ee
where $(\pi_{i-1},\pi_i)$ denotes the $i$-th step of the path and $\Omega^{L_n}_{(\pi_{i-1},
\pi_i)}$ denotes the label of the block this step lies in. What this Hamiltonian
does is count the number of $AA$-matches and $BB$-matches and assign them
energy $-\alpha$ and $-\beta$, respectively, where $\alpha,\beta\in\R$. (Note
that the interaction is assigned to bonds rather than to sites: we identify the
monomers with the steps of the path). As we will recall in Section \ref{S2.1},
without loss of generality we may restrict the interaction parameters to the cone
\be{defcone}
\CONE = \{(\alpha,\beta)\in\R^2\colon\,\alpha\geq |\beta|\}.
\ee

Given $\omega,\Omega$ and $n$, we define the \emph{quenched free energy per step}
as
\be{fedef}
\begin{aligned}
f_{n,L_n}^{\omega,\Omega}
&= \frac{1}{n} \log Z_{n,L_n}^{\omega,\Omega},\\
Z_{n,L_n}^{\omega,\Omega}
&= \sum\limits_{\pi\in\cW_{n,L_n}}
\exp\left[-H_{n,L_n}^{\omega,\Omega}(\pi)\right].
\end{aligned}
\ee
We are interested in the limit $n \to \infty$ subject to the restriction
\be{Ln}
L_n \to \infty \qquad \mbox{ and } \qquad \frac{1}{n}L_n \to 0.
\ee
This is a \emph{coarse-graining} limit where the path spends a long time in each
single block yet visits many blocks. In this limit, there is a separation between
a \emph{copolymer scale} and an \emph{emulsion scale}.

In \cite{dHW06}, Theorem 1.3.1, it was shown that
\be{flim}
\lim_{n\to\infty} f_{n,L_n}^{\omega,\Omega}
= f = f(\alpha,\beta;p)
\ee
exists $\omega,\Omega$-a.s.\ and in mean, is finite and non-random, and can be
expressed as a variational problem involving the free energies of the copolymer
in each of the four block pairs it may encounter and the frequencies at which
the copolymer visits each of these block pairs on the coarse-grained block scale.
This variational problem, which is recalled in Section \ref{S2.1}, will be the
starting point of our analysis.

\subsection{Phase diagram for $p \geq p_c$}
\label{S1.3}

In the supercritical regime the \emph{oil blocks percolate}, and so the
coarse-grained path can choose between moving into the oil or running along
the interface between the oil and the water (see Figure \ref{fig-copolemulinfcl}).
We begin by recalling from den Hollander and Whittington \cite{dHW06} the two
main theorems for the supercritical percolation regime (see Figure
\ref{fig-supcritphd}).

\begin{figure}
\begin{center}
\includegraphics[scale = 0.4]{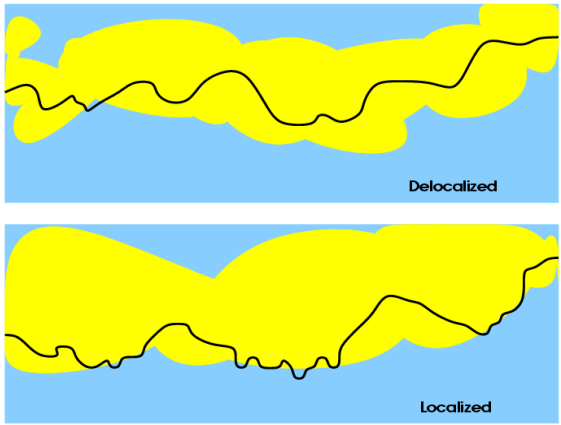}
\end{center}
\caption{Two possible strategies when the oil percolates.}
\label{fig-copolemulinfcl}
\end{figure}

\bt{fesupcr} {\rm (\cite{dHW06}, Theorem 1.4.1)} Let $p \geq p_c$. Then
$(\alpha,\beta)\mapsto f(\alpha,\beta;p)$ is non-analytic along the curve
in $\CONE$ separating the two regions
\be{DL}
\begin{aligned}
\cD &= \hbox{ delocalized phase}
    &= \left\{(\alpha,\beta)\in\CONE \colon f(\alpha,\beta;p)
     = \textstyle{\frac12}\alpha + \varpi\right\},\\
\cL &= \hbox{ localized phase}
    &= \left\{(\alpha,\beta)\in\CONE \colon f(\alpha,\beta;p)
    > \textstyle{\frac12}\alpha + \varpi\right\}.
\end{aligned}
\ee
Here, $\varpi = \lim_{n\to\infty} \frac{1}{n} \log |\cW_{n,L_n}| =
\frac12\log 5$ is the entropy per step of the walk subject to {\rm
(\ref{Ln})}.
\et

\bt{phtrcurve} {\rm (\cite{dHW06}, Theorem 1.4.3)} Let $p \geq p_c$.\\
(i) For every $\alpha \geq 0$ there exists a $\beta_c(\alpha) \in
[0,\alpha]$ such that the copolymer is
\be{betac}
\ba{lll}
&\mbox{delocalized} &\mbox{if }\, -\alpha \leq \beta \leq
\beta_c(\alpha),\\
&\mbox{localized}   &\mbox{if }\, \beta_c(\alpha) < \beta \leq \alpha.
\ea
\ee
(ii) $\alpha\mapsto\beta_c(\alpha)$ is independent of $p$, continuous,
non-decreasing and concave on $[0,\infty)$. There exist $\alpha^* \in
(0,\infty)$ and $\beta^* \in [\alpha^*,\infty)$ such that
\be{phtrlims}
\ba{lll}
&\beta_c(\alpha) = \alpha &\mbox{if }\,\alpha \leq \alpha^*,\\
&\beta_c(\alpha) < \alpha &\mbox{if }\,\alpha > \alpha^*,
\ea
\ee
and
\be{slopedis}
\lim_{\alpha\da\alpha^*} \frac{\alpha-\beta_c(\alpha)}{\alpha-\alpha^*}
\in [0,1), \qquad \lim_{\alpha \to \infty} \beta_c(\alpha) = \beta^*.
\ee
\et

The intuition behind Theorem \ref{fesupcr} is as follows (see Figure
\ref{fig-copolemulinfcl}). Suppose that $p>p_c$. Then the $A$-blocks percolate.
Therefore the copolymer has the option of moving to the infinite cluster of
$A$-blocks and staying inside that infinite cluster forever, thus seeing only
$AA$-blocks. In doing so, it loses an entropy of at most $o(n/L_n) = o(n)$ (on
the coarse-grained scale), it gains an energy $\frac{1}{2}\alpha n+o(n)$ (on
the lattice scale, because only half of its monomers are matched), and it gains
an entropy $\varpi n+o(n)$ (on the lattice scale, because it crosses blocks
diagonally). Alternatively, the path has the option of running along the boundary
of the infinite cluster (at least part of the time), during which it sees
$AB$-blocks and (when $\beta\geq 0$) gains more energy by matching more than
half of its monomers. Consequently,
\be{feineq}
f(\alpha,\beta;p) \geq \textstyle{\frac12}\alpha + \varpi.
\ee
The boundary between the two regimes in (\ref{DL}) corresponds to the crossover
from \emph{full delocalization} into the $A$-blocks to \emph{partial localization}
near the $AB$-interfaces. The critical curve does \emph{not} depend on $p$ as long
as $p>p_c$. Because $p\mapsto f(\alpha,\beta;p)$ is continuous (see Theorem
\ref{feiden}(iii) in Section \ref{S2.1}), the same critical curve occurs at $p=p_c$.

\begin{figure}
\begin{center}
\setlength{\unitlength}{0.45cm}
\begin{picture}(12,12)(0,-3)
\put(0,0){\line(12,0){12}}
\put(0,0){\line(0,8){8}}
\put(0,0){\line(0,-3){3}}
\put(0,0){\line(-4,0){4}}
{\thicklines
\qbezier(2,2)(4,3.2)(9,4.5)
\thicklines
\qbezier(0,0)(1,1)(2,2)
}
\qbezier[60](2,2)(4.5,3.5)(9.5,6.5)
\qbezier[60](0,5.5)(5,5.5)(10,5.5)
\qbezier[60](2,2)(4.5,4.5)(7,7)
\qbezier[15](2,0)(2,1)(2,2)
\qbezier[70](4,-4)(2,-2)(0,0)
\put(-.7,-1){$0$}
\put(12.5,-0.2){$\alpha$}
\put(-0.1,8.5){$\beta$}
\put(2,2){\circle*{.3}}
\put(1.7,-1){$\alpha^*$}
\put(-1.4,5.4){$\beta^*$}
\put(10.5,4.5){$\beta_c(\alpha)$}
\put(5.6,4.3){$\cal L$}
\put(6,2.3){$\cal D$}
\end{picture}
\end{center}
\caption{Qualitative picture of $\alpha\mapsto\beta_c(\alpha)$ for $p \geq p_c$.}
\label{fig-supcritphd}
\end{figure}
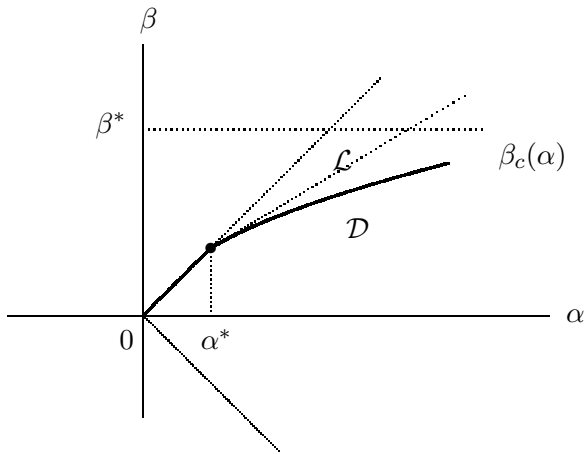


The proof of Theorem \ref{phtrcurve} relies on a representation of $\cD$ and $\cL$
in terms of the single interface (!) free energy (see Proposition \ref{p:phtrinfchar}
in Section \ref{S2.3}). This representation, which is key to the analysis of the
critical curve, expresses the fact that localization occurs for the emulsion free
energy only when the single interface free energy is \emph{sufficiently deep inside}
its localized phase. This gap is needed to compensate for the loss of entropy
associated with running along the interface and crossing at a steeper angle.

The intuition behind Theorem \ref{phtrcurve} is as follows (see Figure
\ref{fig-supcritphd}). Pick a point $(\alpha,\beta)$ inside $\cD$. Then the
copolymer spends almost all of its time deep inside the $A$-blocks. Increase
$\beta$ while keeping $\alpha$ fixed. Then there will be a larger energetic
advantage for the copolymer to move some of its monomers from the $A$-blocks to
the $B$-blocks by \emph{crossing the interface inside the $AB$-block pairs}. There
is some entropy loss associated with doing so, but if $\beta$ is large enough, then
the energetic advantage will dominate, so that $AB$-localization sets in. The
value at which this happens depends on $\alpha$ and is strictly positive.
Since the entropy loss is finite, for $\alpha$ large enough the energy-entropy
competition plays out not only below the diagonal, but also below a horizontal
asymptote. On the other hand, for $\alpha$ small enough the loss of entropy
dominates the energetic advantage, which is why the critical curve has
a piece that lies on the diagonal. The larger the value of $\alpha$ the
larger the value of $\beta$ where $AB$-localization sets in. This explains
why the critical curve is non-decreasing. At the critical curve the single
interface free energy is already inside its localized phase. This explains why
the critical curve has a slope discontinuity at $\alpha^*$.

\subsection{Main results}
\label{S1.4}

In the present paper we prove three theorems, which complete the analysis of the
phase diagram in Figure \ref{fig-supcritphd}.

\bt{phtrmon}
Let $p \geq p_c$. Then $\alpha\mapsto\beta_c(\alpha)$ is strictly
increasing on
$[0,\infty)$.
\et

\bt{phtr2nd}
Let $p \geq p_c$. Then for every $\alpha \in (\alpha^*,\infty)$ there exist
$0<C_1<C_2<\infty$ and $\delta_0>0$ (depending on $p$ and $\alpha$) such that
\be{f2ndbds}
C_1\,\delta^2 \leq f\left(\alpha,\beta_c(\alpha)+\delta;p\right)
- f\left(\alpha,\beta_c(\alpha);p\right)
\leq C_2\,\delta^2 \qquad \forall\,\delta \in (0,\delta_0].
\ee
\et

\bt{locinfdiff}
Let $p \geq p_c$. Then, under Assumption {\rm \ref{assum}}, $(\alpha,\beta)\mapsto
f(\alpha,\beta;p)$ is infinitely differentiable throughout $\cL$.
\et

\noindent
Assumption \ref{assum} states that a certain intermediate single-interface free
energy has a finite curvature. We believe this assumption to be true, but have
not managed to prove it. See the end of Section \ref{Sustrconv} for a motivation
and for a way to weaken it.

Theorem \ref{phtrmon} implies that the critical curve never reaches the horizontal
asymptote, which in turn implies that $\alpha^*<\beta^*$ and that the slope in
(\ref{slopedis}) is $>0$. Theorem \ref{phtr2nd} shows that the phase transition
is \emph{second order off the diagonal}. (In contrast, we know that the phase
transition is \emph{first order on the diagonal}. Indeed, the free energy equals
$\frac12\alpha+\varpi$ on and below the diagonal segment between $(0,0)$ and
$(\alpha^*,\alpha^*)$, and equals $\frac12\beta+\varpi$ on and above this segment
as is evident from interchanging $\alpha$ and $\beta$.) Theorem \ref{locinfdiff}
tells us that the critical curve is the only location in $\CONE$ where a phase
transition of finite order occurs. Theorems \ref{phtrmon}, \ref{phtr2nd} and
\ref{locinfdiff} are proved in Sections \ref{S3}, \ref{S4} and \ref{S5}, respectively.
Their proofs rely on \emph{perturbation arguments}, in combination with \emph{exponential
tightness of the excursions away from the interface} inside the localized phase.

The analogues of Theorems \ref{phtr2nd} and \ref{locinfdiff} for the single
flat infinite interface were derived in Giacomin and Toninelli \cite{GT06a},
\cite{GT06b}. For that model the phase transition is shown to be \emph{at least
of second order}, i.e., only the quadratic upper bound is proved. Numerical
simulation indicates that the transition may well be of higher order.

The mechanisms behind the phase transition in the two models are different. While
for the single interface model the copolymer makes long excursions away from the
interface and dips below the interface during a fraction of time that is at most
of order $\delta^2$, in our emulsion model the copolymer runs along the interface
during a fraction of time that is of order $\delta$, and in doing so stays close
to the interface. Morover, because near the critical curve for the emulsion model
the single interface model is already inside its localized phase, there is a
variation of order $\delta$ in the single interface free energy. Thus, the $\delta^2$
in the emulsion model is the product of two factors $\delta$, one coming from the
time spent running along the interface and one coming from the variation of the
constituent single interface free energy away from its critical curve. See Section
\ref{S4} for more details.

In the proof of Theorem \ref{locinfdiff} we use some of the ingredients of the
proof in Giacomin and Toninelli \cite{GT06b} of the analogous result for the single
interface model. However, in the emulsion model there is an extra complication, namely,
the speed per step to move one unit of space forward may vary (because steps are
up, down and to the right), while in the single interface model this is fixed at
one (because steps are up-right and down-right). We need to control the infinite
differentiability with respect to this speed variable. This is done by considering
the Fenchel-Legendre transform of the free energy, in which the dual of the speed
variable enters into the Hamiltonian rather than in the set of paths. Moreover,
since the block pair free energies and the total free energy are both given by
variational problems, we need to show \emph{uniqueness of maximisers} and prove
\emph{non-degeneracy of the Jacobian matrix at these maximisers} in order to be
able to apply implicit function theorems. See Section \ref{S5} for more details.


\section{Preparations}
\label{S2}

In Sections \ref{S2.1}--\ref{S2.3} we recall a few key facts from den Hollander and
Whittington \cite{dHW06} that will be crucial for the proofs. Section \ref{S2.1} gives
the variational formula for the free energy, Section \ref{S2.2} states two elementary
lemmas about path entropies, while Section \ref{S2.3} states two lemmas for the
block pair free energies and a proposition \emph{characterising the localized phase
of the emulsion free energy in terms of the single interface free energy}. Section
\ref{S2.4} states a lemma about the tail behaviour of the single interface free energy
and the block pair free energies, showing that long paths wash out the effect of entropy.

\subsection{Variational formula for the free energy}
\label{S2.1}

To formulate the key variational formula for the free energy that serves as our
starting point, we need three ingredients.

\medskip\noindent
{\bf I.} For $L\in\N$ and $a \geq 2$ (with $aL$ integer), let $\cW_{aL,L}$ denote
the set of $aL$-step directed self-avoiding paths starting at $(0,0)$, ending
at $(L,L)$, and in between not leaving the two adjacent blocks of size $L$
labelled $(0,0)$ and $(-1,0)$ (see Figure \ref{fig-blockcross}). For $k,l\in\{A,B\}$,
let
\be{feklaL}
\begin{aligned}
\psi^\omega_{kl}(aL,L) &= \frac{1}{aL} \log Z^{\omega}_{aL,L},\\
Z^{\omega}_{aL,L} &= \sum_{\pi\in\cW_{aL,L}}
\exp\big[-H^{\omega,\Omega}_{aL,L}(\pi)\big]
\hbox{ when } \Omega(0,0)=k \hbox{ and } \Omega(0,-1)=l,\\
\end{aligned}
\ee
denote the free energy per step in a $kl$-block when the number of steps inside
the block is $a$ times the size of the block. Let
\be{fekl}
\lim_{L\to\infty} \psi^\omega_{kl}(aL,L) = \psi_{kl}(a)
= \psi_{kl}(\alpha,\beta;a).
\ee
Note here that $k$ labels the type of the block that is diagonally crossed, while
$l$ labels the type of the block that appears as its neighbour at the starting
corner (see Figure \ref{fig-blockcross}). We will recall in Section \ref{S2.3}
that the limit exists $\omega$-a.s.\ and in mean, and is non-random. Both $\psi_{AA}$
and $\psi_{BB}$ take on a simple form, whereas $\psi_{AB}$ and $\psi_{BA}$ do not.

\begin{figure}
\vspace{0.5cm}
\begin{center}
\setlength{\unitlength}{0.25cm}
\begin{picture}(10,10)(0,-6)
{\thicklines
\qbezier(0,6)(3,6)(6,6)
\qbezier(6,0)(6,3)(6,6)
\qbezier(0,0)(3,0)(6,0)
\qbezier(6,-6)(6,-3)(6,0)
}
\qbezier[40](0,0)(0,3)(0,6)
\qbezier[40](0,0)(0,-3)(0,-6)
\qbezier[40](0,-6)(3,-6)(6,-6)
\put(6,6){\circle*{0.5}}
\put(6,-6){\circle*{0.5}}
\put(0,0){\circle*{0.5}}
\put(-4,0){$(0,0)$}
\put(4,7){$(L,L)$}
\put(4,-7.8){$(L,-L)$}
\qbezier[45](0,0)(3,3)(6,6)
{\thicklines
\qbezier(2.5,3)(2.75,3)(3,3)
\qbezier(3,2.5)(3,2.75)(3,3)
}
\end{picture}
\end{center}
\caption{Two neighbouring blocks. The dashed line with arrow indicates that the
coarse-grained path makes a step diagonally upwards. The path enters at $(0,0)$,
exits at $(L,L)$, and in between stays confined to the two blocks.}
\label{fig-blockcross}
\end{figure}
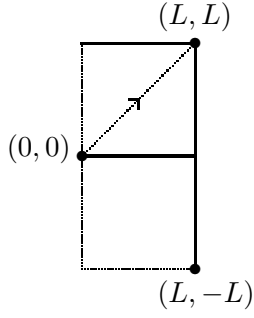

\medskip\noindent
{\bf II.} Let $\cW$ denote the class of all \emph{coarse-grained paths} $\Pi=
\{\Pi_j\colon\,j\in\N\}$ that step diagonally from corner to corner (see Figure 4,
where each dashed line with arrow denotes a single step of $\Pi$). For $n\in\N$,
$\Pi\in\cW$ and $k,l\in\{A,B\}$, let
\be{fracdef}
\rho^\Omega_{kl}(\Pi,n) = \frac{1}{n}\sum_{j=1}^n
1\,\left\{
\begin{array}{l}
\hbox{$(\Pi_{j-1},\Pi_j)$ diagonally crosses a $k$-block in $\Omega$ that has
an $l$-block}\\
\hbox{in $\Omega$ appearing as its neighbour at the starting corner}
\end{array}
\right\}.
\ee
Abbreviate
\be{rhoklp}
\rho^\Omega(\Pi,n) =
\left(\rho^\Omega_{kl}(\Pi,n)\right)_{k,l\in\{A,B\}},
\ee
which is a $2 \times 2$ matrix with non-negative elements that sum up to 1.
Let $\cR^\Omega(\Pi)$ denote the set of all limits points of the sequence
$\{\rho^\Omega(\Pi,n)\colon\,n\in\N\}$, and put
\be{Ldef}
\cR^\Omega = \mbox{the closure of the set } \bigcup_{\Pi\in\cW}
\cR^\Omega(\Pi).
\ee
Clearly, $\cR^\Omega$ exists for all $\Omega$. Moreover, since $\Omega$ has
a trivial sigma-field at infinity (i.e., all events not depending on finitely
many coordinates of $\Omega$ have probability 0 or 1) and $\cR^\Omega$ is measurable
with respect to this sigma-field, we have
\be{Rdef}
\cR^\Omega = \cR(p) \qquad \Omega-a.s.
\ee
for some \emph{non-random closed} set $\cR(p)$. This set, which depends on
the parameter $p$ controlling $\Omega$, is the set of all possible limit points
of the frequencies at which the four pairs of adjacent blocks can be seen along
an infinite coarse-grained path. The elements of $\cR(p)$ are matrices
\be{matr}
\left(\ba{ll} \rho_{AA} &\rho_{AB}\\ \rho_{BA} &\rho_{BB} \ea\right)
\ee
whose elements are non-negative and sum up to 1. In \cite{dHW06}, Proposition 3.2.1,
it was shown that $p \mapsto \cR(p)$ is continuous in the Hausdorff metric and that,
for $p \geq p_c$, $\cR(p)$ contains matrices of the form
\be{Rpa}
M_\gamma = \left(\ba{ll} 1-\gamma &\gamma\\ 0 &0 \ea\right)
\quad \mbox{ for } \gamma\in C \subset (0,1) \mbox{ closed}.
\ee

\begin{figure}
\vspace{1cm}
\begin{center}
\setlength{\unitlength}{0.3cm}
\begin{picture}(10,10)(4,0)
{\thicklines
\qbezier(0,16)(8,16)(16,16)
\qbezier(16,0)(16,8)(16,16)
\qbezier(0,0)(8,0)(16,0)
\qbezier(0,0)(0,8)(0,16)
\qbezier(4,0)(4,8)(4,16)
\qbezier(8,0)(8,8)(8,16)
\qbezier(12,0)(12,8)(12,16)
\qbezier(0,4)(8,4)(16,4)
\qbezier(0,8)(8,8)(16,8)
\qbezier(0,12)(8,12)(16,12)
}
\qbezier[30](0,4)(2,6)(4,8)
\qbezier[30](4,8)(6,10)(8,12)
\qbezier[30](8,12)(10,10)(12,8)
\qbezier[30](12,8)(14,6)(16,4)
{\thicklines
\qbezier(1.7,6)(1.85,6)(2,6)
\qbezier(2,5.7)(2,5.85)(2,6)
\qbezier(5.7,10)(5.85,10)(6,10)
\qbezier(6,9.7)(6,9.85)(6,10)
\qbezier(9.9,9.8)(10.05,9.8)(10.2,9.8)
\qbezier(10.2,10.1)(10.2,9.95)(10.2,9.8)
\qbezier(13.9,5.8)(14.05,5.8)(14.2,5.8)
\qbezier(14.2,6.1)(14.2,5.95)(14.2,5.8)
}
\put(1,14.5){$A$}
\put(5,14.5){$B$}
\put(9,14.5){$A$}
\put(13,14.5){$A$}
\put(1,10.5){$B$}
\put(5,10.5){$A$}
\put(9,10.5){$B$}
\put(13,10.5){$B$}
\put(1,6.5){$B$}
\put(5,6.5){$A$}
\put(9,6.5){$A$}
\put(13,6.5){$A$}
\put(1,2.5){$B$}
\put(5,2.5){$B$}
\put(9,2.5){$A$}
\put(13,2.5){$B$}
\put(0.05,4){\circle*{.6}}
\put(0.05,12){\circle*{.6}}
\put(4.05,0){\circle*{.6}}
\put(4.05,8){\circle*{.6}}
\put(4.05,16){\circle*{.6}}
\put(8.05,4){\circle*{.6}}
\put(8.05,12){\circle*{.6}}
\put(12.05,0){\circle*{.6}}
\put(12.05,8){\circle*{.6}}
\put(12.05,16){\circle*{.6}}
\put(16.05,4){\circle*{.6}}
\put(16.05,12){\circle*{.6}}
\end{picture}
\end{center}
\caption{$\Pi$ sampling $\Omega$. The dashed lines with arrows indicate the
steps of $\Pi$. The block pairs encountered in this example are $BB$, $AA$,
$BA$ and $AB$.}
\label{fig-coarsesampl}
\end{figure}

\medskip\noindent
{\bf III.} Let $\cA$ be the set of $2 \times 2$ matrices whose elements are $\geq 2$.
The elements of these matrices are used to record the average number of steps made
by the path inside the four block pairs divided by the block size.

\medskip
With I--III in hand, we can state the variational formula for the free energy.
Define
\be{fctV}
V\colon\,\big((\rho_{kl}),(a_{kl})\big) \in \cR(p) \times \cA
\mapsto\frac{\sum_{kl} \rho_{kl} a_{kl} \psi_{kl}(a_{kl})}
{\sum_{kl} \rho_{kl} a_{kl}}.
\ee

\bt{feiden} {\rm (\cite{dHW06}, Theorem 1.3.1)}\\
(i) For all $(\alpha,\beta)\in\R^2$ and $p \in (0,1)$,
\be{sa}
\lim_{n\to\infty} f_{n,L_n}^{\omega,\Omega} = f = f(\alpha,\beta;p)
\ee
exists $\omega,\Omega$-a.s.\ and in mean, is finite and non-random, and is
given by
\be{fevar}
f = \sup_{(\rho_{kl}) \in \cR(p)}\,\sup_{(a_{kl}) \in \cA}\,
V\big((\rho_{kl}),(a_{kl})\big).
\ee
(ii) $(\alpha,\beta)\mapsto f(\alpha,\beta;p)$ is convex on $\R^2$
for all $p \in (0,1)$.\\
(iii) $p\mapsto f(\alpha,\beta;p)$ is continuous on $(0,1)$ for all
$(\alpha,\beta)\in\R^2$.\\
(iv) For all $(\alpha,\beta)\in\R^2$ and $p \in (0,1)$,
\be{symms}
\begin{aligned}
f(\alpha,\beta;p) &= f(\beta,\alpha;1-p),\\
f(\alpha,\beta;p) &= \textstyle{\frac12}(\alpha+\beta) +
f(-\beta,-\alpha;p).
\end{aligned}
\ee
\et

\noindent
Part (iv) is the reason why without loss of generality we may restrict the parameters
to the cone in (\ref{defcone}).

The behaviour of $f$ as a function of $(\alpha,\beta)$ is different for $p \geq p_c$
and $p < p_c$, where $p_c \approx 0.64$ is \emph{the critical percolation density
for directed bond percolation on the square lattice}. The reason is that the
coarse-grained paths $\Pi$, which determine the set $\cR(p)$, sample $\Omega$
just like paths in directed bond percolation on the square lattice rotated by 45
degrees sample the percolation configuration (see Figure \ref{fig-coarsesampl}).

\subsection{Path entropies}
\label{S2.2}

The two lemmas in this section identify the path entropies associated with crossing
a block and running along an interface. They are based on straightforward computations
and are crucial for the analysis of the model.

Let
\be{DOMdef}
\DOM = \{(a,b)\colon\, a \geq 1+b, b\geq 0\}.
\ee
For $(a,b) \in \DOM$, let $N_L(a,b)$ denote the number of $aL$-step self-avoiding
directed paths from $(0,0)$ to $(bL,L)$ whose vertical displacement stays within
$(-L,L]$ ($aL$ and $bL$ are integer). Let
\be{kappa}
\kappa(a,b) = \lim_{L\to\infty} \frac{1}{aL} \log N_L(a,b).
\ee

\bl{l:ka} {\rm (\cite{dHW06}, Lemma 2.1.1)}\\
(i) $\kappa(a,b)$ exists and is finite for all $(a,b)\in\DOM$.\\
(ii) $(a,b) \mapsto a\kappa(a,b)$ is continuous and strictly concave
on $\DOM$ and analytic on the interior of $\DOM$.\\
(iii) For all $a \geq 2$,
\be{ka}
a \kappa(a,1) = \log 2 + \textstyle{\frac 12} \left[a \log a - (a-2)
\log (a-2)\right].
\ee
(iv) $\sup_{a\geq 2} \kappa(a,1) = \kappa(a^*,1) = \frac12 \log 5$ with unique maximiser
$a^* = \frac52$.\\
(v) $(\frac{\partial}{\partial a}\kappa)(a^*,1)=0$ and $a^*(\frac{\partial}{\partial b}
\kappa)(a^*,1)=\frac12\log\frac95$.\\
(vi) $(\frac{\partial^2}{\partial a^2}\kappa)(a^*,1)=-\textstyle{\frac{8}{25}}$,
$(\frac{\partial^2}{\partial b^2}\kappa)(a^*,1)=-\textstyle{\frac{262}{225}}$ and
$(\frac{\partial^2}{\partial a \partial b}\kappa)(a^*,1)=-\textstyle{\frac{2}{25}}
\log\textstyle{\frac95}+\textstyle{\frac{44}{75}}$.
\el

\noindent
Part (vi), which was not stated in \cite{dHW06}, follows from a direct computation via
\cite{dHW06}, Equations (2.1.5), (2.1.8) and (2.1.9).

For $\mu \geq 1$, let $\hat N_L(\mu)$ denote the number of $\mu L$-step
self-avoiding paths from $(0,0)$ to $(L,0)$ with no restriction on the
vertical displacement ($\mu L$ is integer). Let
\be{kappamudef}
\hat \kappa(\mu) = \lim_{L\to\infty} \frac{1}{\mu L} \log \hat
N_L(\mu).
\ee

\bl{l:kamu} {\rm (\cite{dHW06}, Lemma 2.1.2)}\\
(i) $\hat\kappa(\mu)$ exists and is finite for all $\mu\geq 1$.\\
(ii) $\mu\mapsto\mu\hat\kappa(\mu)$ is continuous and strictly concave
on $[1,\infty)$ and analytic on $(1,\infty)$.\\
(iii) $\hat\kappa(1)=0$ and $\mu\hat\kappa(\mu)\sim\log\mu$ as
$\mu\to\infty$.\\
(iv) $\sup_{\mu\geq 1} \mu[\hat\kappa(\mu)-\frac12\log 5]<\frac12\log\frac95$.
\el

\subsection{Free energies per pair of blocks}
\label{S2.3}

In this section we identify the block pair free energies. In \cite{dHW06},
Proposition 2.2.1, we showed that $\omega$-a.s.\ and in mean,
\be{psiAABB}
\psi_{AA}(a) = \textstyle{\frac12}\alpha+\kappa(a,1)
\qquad \mbox{ and } \qquad
\psi_{BB}(a) = \textstyle{\frac12}\beta+\kappa(a,1).
\ee
Both are easy expressions, because $AA$-blocks and $BB$-blocks have no interface.

To compute $\psi_{AB}(a)$ and $\psi_{BA}(a)$, we first consider the free energy
per step when the path moves in the vicinity of a \emph{single linear interface}
$\cI$ separating a liquid $A$ in the upper halfplane from a liquid $B$ in the lower
halfplane including the interface itself. To that end, for $c \geq b >0$, let
$\cW_{cL,bL}$ denote the set of $cL$-step directed self-avoiding paths starting at
$(0,0)$ and ending at $(bL,0)$. Define
\be{feinf}
\psi^{\omega,\cI}_L(c,b) = \frac{1}{cL} \log Z^{\omega,\cI}_{cL,bL}
\ee
with
\be{Zinf}
\begin{aligned}
Z^{\omega,\cI}_{cL,bL}
&= \sum_{\pi\in\cW_{cL,bL}}
\exp\left[-H^{\omega,\cI}_{cL}(\pi)\right],\\
H^{\omega,\cI}_{cL}(\pi)
&= - \sum_{i=1}^{cL}\Big(\alpha\, 1\{\omega_i=A,(\pi_{i-1},\pi_i)>0\}
+\beta\, 1\{\omega_i=B,(\pi_{i-1},\pi_i) \leq 0\}\Big),
\end{aligned}
\ee
where $(\pi_{i-1},\pi_i)>0$ means that the $i$-th step lies in the upper halfplane and
$(\pi_{i-1},\pi_i) \leq 0$ means that the $i$-th step lies in the lower halfplane or
in the interface (see Figure \ref{fig-linintdef}).

\begin{figure}
\begin{center}
\includegraphics[scale = 0.5]{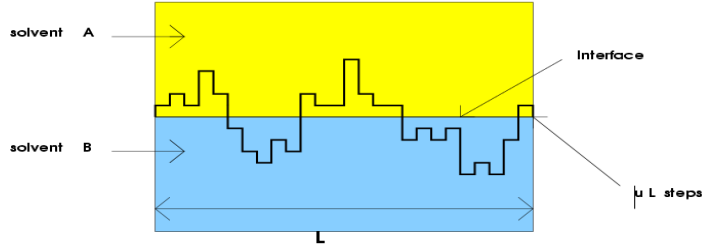}
\end{center}
\caption{Illustration of (\ref{feinf}--\ref{Zinf}) for $c=\mu$ and $b=1$.}
\label{fig-linintdef}
\end{figure}

For $a \in [2,\infty)$, let
\be{DOMadef}
\DOM(a) = \{(c,b)\in\R^2\colon\, 0\leq b \leq 1,\,c\geq b,\,a-c \geq 2-b\}.
\ee

\bl{l:feinflim} {\rm (\cite{dHW06}, Lemma 2.2.1)} For all $(\alpha,\beta)\in\R^2$
and $c \geq b > 0$,
\be{fesainf}
\lim_{L\to\infty} \psi^{\omega,\cI}_L(c,b) = \phi^{\cI}(c/b)
= \phi^{\cI}(\alpha,\beta;c/b)
\ee
exists $\omega$-a.s.\ and in mean, and is non-random.
\el

\bl{l:linkinf}{\rm (\cite{dHW06}, Lemma 2.2.2)} For all $(\alpha,\beta)\in\R^2$
and $a\geq 2$,
\be{psiinflink}
\begin{aligned}
a\psi_{AB}(a) &= a\psi_{AB}(\alpha,\beta;a)\\
&= \sup_{(c,b) \in \DOM(a)}
\big\{ c\phi^{\cI}(c/b)+(a-c)\left[\tfrac{1}{2}\alpha+\kappa(a-c,1-b)\right] \big\}.
\end{aligned}
\ee
\el

\bl{psiprop} {\rm (\cite{dHW06}, Lemma 2.2.3)} Let $k,l\in\{A,B\}$.\\
(i) For all $(\alpha,\beta)\in\R^2$, $a\mapsto a\psi_{kl}(\alpha,\beta;a)$ is
continuous and concave on $[2,\infty)$.\\
(ii) For all $a\in [2,\infty)$,
$\alpha\mapsto\psi_{kl}(\alpha,\beta;a)$ and $\beta\mapsto\psi_{kl}(\alpha,\beta;a)$
are continuous and non-decreasing on $\R$.
\el

\noindent
The idea behind Lemma \ref{l:linkinf} is that the copolymer follows the $AB$-interface
over a distance $bL$ during $cL$ steps and then wanders away from the $AB$-interface to
the diagonally opposite corner over a distance $(1-b)L$ during $(a-c)L$ steps. The optimal
strategy is obtained by maximising over $b$ and $c$ (see Figure \ref{fig-twoblockstrats}).
A similar expression holds for $\psi_{BA}$.

\begin{figure}
\vspace{1cm}
\begin{center}
\setlength{\unitlength}{0.3cm}
\begin{picture}(10,10)(4,-5)
{\thicklines
\qbezier(0,6)(3,6)(6,6)
\qbezier(6,0)(6,3)(6,6)
\qbezier(0,0)(3,0)(6,0)
\qbezier(6,-6)(6,-3)(6,0)
}
\qbezier[40](0,0)(0,3)(0,6)
\qbezier[40](0,0)(0,-3)(0,-6)
\qbezier[40](0,-6)(3,-6)(6,-6)
{\thicklines
\qbezier(2.7,3)(2.85,3)(3,3)
\qbezier(3,2.7)(3,2.85)(3,3)
\qbezier(13.15,2.9)(13.3,2.95)(13.45,3)
\qbezier(13.45,2.7)(13.45,2.85)(13.45,3)
}
\put(1.5,4.5){$A$}
\put(1.5,-1.5){$B$}
\qbezier[60](0,0)(3,3)(6,6)
{\thicklines
\qbezier(9,6)(12,6)(15,6)
\qbezier(15,0)(15,3)(15,6)
\qbezier(9,0)(12,0)(15,0)
\qbezier(15,-6)(15,-3)(15,0)
}
\qbezier[40](9,0)(9,3)(9,6)
\qbezier[40](9,0)(9,-3)(9,-6)
\qbezier[40](9,-6)(12,-6)(15,-6)
\put(10.5,4.5){$A$}
\put(10.5,-1.5){$B$}
\qbezier[50](12,0.2)(13.5,3.1)(15,6)
\qbezier[30](9,.2)(10.5,.2)(12,.2)
\put(0,0){\circle*{.5}}
\put(6,6){\circle*{.5}}
\put(6,-6){\circle*{.5}}
\put(9,0){\circle*{.5}}
\put(15,6){\circle*{.5}}
\put(15,-6){\circle*{.5}}
\end{picture}
\end{center}
\caption{Two possible strategies inside an $AB$-block: The path can either move
straight across or move along the interface for awhile and then move across.
Both strategies correspond to a coarse-grained step diagonally upwards as in
Figure \ref{fig-coarsesampl}.}
\label{fig-twoblockstrats}
\end{figure}
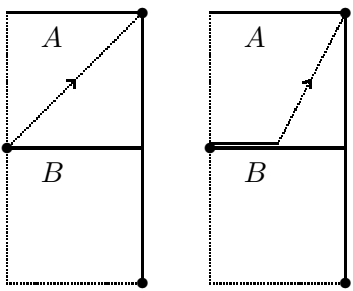

The key result behind the analysis of the critical curve in Figure \ref{fig-supcritphd}
is the following proposition, whose proof relies on Lemmas \ref{l:feinflim}--\ref{psiprop}.

\bp{p:phtrinfchar} {\rm (\cite{dHW06}, Proposition 2.3.1)}\\
Let $p\geq p_c$. Then $(\alpha,\beta)\in\cL$ if and only if
\be{phinfcr}
\sup_{\mu \geq 1}
\mu\left[\phi^{\cI}(\alpha,\beta;\mu)-\tfrac12\alpha-\tfrac12\log 5\right]
> \tfrac12\log\tfrac95.
\ee
\ep

\noindent
Note that $\frac12\alpha+\frac12\log 5$ is the free energy per step when the
copolymer diagonally crosses an $A$-block. What Proposition \ref{p:phtrinfchar}
says is that for the copolymer in the emulsion to localize, the excess free energy
of the copolymer along the interface must be sufficiently large to compensate for
the loss of entropy of the copolymer coming from the fact that it must diagonally
cross the block at a steeper angle (see Figure \ref{fig-twoblockstrats}).

We have
\be{phiublb}
\tfrac12\alpha +\hat\kappa(\mu) \leq \phi^\cI(\mu) \leq \alpha + \hat\kappa(\mu),
\ee
where $\hat\kappa(\mu)$ is the entropy defined in (\ref{kappamudef}). The upper
bound and the gap in Lemma \ref{l:kamu}(iv) are responsible for the linear piece
of the critical curve in Figure \ref{fig-supcritphd}. In analogy with Lemma \ref{l:kamu},
we note further that, for all $(\alpha,\beta)\in\R^2$, $\phi^\cI(\mu)$ is finite
for all $\mu\geq 1$, $\mu\mapsto \mu \phi^\cI(\mu)$ is continuous and concave on
$[1,\infty)$, and $\phi^{\cI}(1)=\tfrac12\beta$.

\subsection{Tail behaviour of free energies for long paths}
\label{S2.4}

In this section we show that long paths wash out the effect of entropy. This will
be needed later for compactification arguments.

Let $\P_{\mu L}^{\omega,\cI}$ denote the law of the copolymer of length $\mu L$
in the single interface model with the energy shifted by $-\frac{\alpha}{2}$,
i.e.,
\be{colawdef}
\P_{\mu L}^{\omega,\cI}(\pi) = \frac{1}{Z_{\mu L,L} ^{\omega,\cI}}\,
\exp\left[-H_{\mu L}^{\omega,\cI}(\pi)\right], \qquad \pi \in \cW_{\mu L,L},
\ee
with
\be{newham}
H^{\omega,\cI}_{\mu L}(\pi)
= - \sum_{i=1}^{\mu L}\Big(-\alpha\, 1\{\omega_i=A\}
+\beta\, 1\{\omega_i=B\}\Big)\,1\{(\pi_{i-1},\pi_i)\leq0\}.
\ee
Let
\be{freeengs}
\phi^{\cI}(\mu) = \phi^{\cI}(\alpha,\beta;\mu)
= \lim_{L\to\infty} \phi_{\mu L}^{\omega,\cI} \quad \omega-a.s.
\quad \mbox{ with } \quad
\phi_{\mu L}^{\omega,\cI} = \phi_{\mu L}^{\omega,\cI}(\alpha,\beta)
= \frac{1}{\mu L} \log Z_{\mu L,L} ^{\omega,\cI}
\ee
(compare with (\ref{Zinf})). Henceforth we adopt this shift, but we retain the same
notation. The reader must keep this in mind throughout the sequel!

\bl{mulim}
For any $\beta_0>0$,\\
(i) $\lim_{\mu\to\infty} \phi^\cI(\alpha,\beta;\mu)=0$,\\
(ii) $\lim_{a\to\infty} \psi_{AB}(\alpha,\beta;a)=0$,\\
uniformly in $\alpha\geq \beta$ and $\beta\leq\beta_0$.
\el

\bpr
(i) Recall the definition of $\cW_{\mu L,L}$ in Section \ref{S2.3}. Abbreviate
$\chi_i=1\{\omega_i=B\}-1\{\omega_i=A\}$. Because $\alpha \geq\beta$ and
$\beta\leq\beta_0$, we have
\be{fest17}
\begin{aligned}
\phi^\cI(\alpha,\beta;\mu) \leq
&\lim_{L\to \infty} \frac{1}{\mu L} \log \sum_{\pi\in \cW_{\mu L,L}}
\exp\left[\beta\sum_{i=1}^{\mu L}
\chi_i 1\{(\pi_{i-1},\pi_i)\leq 0\}\right]\\
\leq & \hat\kappa(\mu)+ \beta_0
\limsup_{L\to \infty} \frac{1}{\mu L} \max_{\pi\in \cW_{\mu L,L}}
\left\{\sum_{i=1}^{\mu L}\chi_i 1\{(\pi_{i-1},\pi_i)\leq 0\}\right\}.
\end{aligned}
\ee
We know from Lemma \ref{l:kamu}(iii) that $\lim_{\mu\to\infty}\hat\kappa(\mu)=0$.
Therefore it suffices to show that for every $\gep>0$ there exists a $\mu_0(\gep)
\geq 2$ such that
\be{eqt}
\limsup_{L\to \infty} \frac{1}{\mu L} \max_{\pi\in\cW_{\mu L,L}}
\left\{\sum_{i=1}^{\mu L}
\chi_i 1\{(\pi_{i-1},\pi_i)\leq 0\}\right\} \leq \gep \quad \omega-a.s.
\qquad\forall\,\mu\geq\mu_0(\gep).
\ee

The random variables $\chi_i$ are i.i.d.\ $\pm 1$ with probability $\frac12$.
Let $I_j$ be the set of indices $i$ in the $j$-th excursion of $\pi$ on or
below the interface. Then $\sum_{i=1}^{\mu L} \chi_i 1\{(\pi_{i-1},\pi_i)\leq 0\}
= \sum_j \sum_{i\in I_j} \chi_i$. Let $\cF_{\mu,L}$ denote the family of all
possible sequences $I=(I_j)$ as $\pi$ runs over the set $\cW_{\mu L,L}$, and
write $|I|=\sum_j |I_j|$. For $0<\gep\leq 1$, consider the quantity
\be{fest18}
p_{\mu,L,\gep}=\P\left(\exists I\in \cF_{\mu,L}\colon\,\sum_j
\sum_{i \in I_j} \chi_i \geq \gep \mu L\right),
\ee
where $\P$ denotes the probability law of $\omega$. By the Markov inequality,
there exists a $C>0$ such that
\be{eq8}
\P\left(\sum_{i=1}^N \chi_i \geq \gep RN \right) \leq e^{-C\gep^2RN}
\qquad \forall\,N,R\geq 1,\,\forall\,0 < \gep \leq 1.
\ee
Since $|I|\leq\mu L$ for all $I\in\cF_{\mu,L}$, we can apply (\ref{eq8})
with $N=|I|$ and $R=\mu L/|I|$ to estimate
\be{fest19}
p_{\mu,L,\gep}
\leq \sum_{I\in\cF_{\mu,L}} \P\left(\sum_j \sum_{i\in I_j} \chi_i
\geq \gep \frac{\mu L}{|I|}|I|\right)
\leq |\cF_{\mu,L}|\,e^{-C\gep^2\mu L}.
\ee
Since
\be{Fcardest}
|\cF_{\mu,L}| \leq \binom{\mu L}{L}^2
= \exp\left[C(\mu)L+o(L)\right] \qquad \mbox{ as } L\to\infty,
\ee
with $C(\mu)\sim\log\mu$ as $\mu\to\infty$, there exists a $C'>0$ such that,
for $\mu \geq 2$ and $L$ large enough, $|\cF_{\mu,L}| \leq \exp[LC'\log\mu]$
and hence $p_{\mu,L,\gep}\leq\exp[L(C'\log\mu-C\gep^2\mu)]$. Thus, there
exists a $\mu_0(\gep) \geq 2$ such that for $\mu\geq\mu_0(\gep)$,
\be{fest20}
\sum_{L=1}^\infty p_{\mu,L,\gep} < \infty.
\ee
The Borel-Cantelli lemma now allows us to assert that, $\omega$-a.s.\ for
$\mu\geq\mu_0(\gep)$ and $L$ large enough, the inequality $\sum_j\sum_{i\in I_j}
\chi_i \leq \gep \mu L$ holds uniformly in $I\in\cF_{\mu,L}$. Hence (\ref{eqt})
is true indeed.

\medskip\noindent
(ii) This follows from a similar argument. The counterpart of equation
\eqref{fest17} is (recall \eqref{DOMdef}-\eqref{kappa})
\be{festiv}
\psi_{AB}(\alpha,\beta;a) \leq \kappa(a,1)+ \beta_0
\limsup_{L\to \infty} \frac{1}{a L} \max_{\pi\in N_L(a,1)}
\left\{\sum_{i=1}^{a L}\chi_i 1\{(\pi_{i-1},\pi_i)\leq 0\}\right\}.
\ee
Lemma \ref{l:ka}(iii) implies that $\kappa(a,1)\to 0$ as $a\to \infty$, while the proof that $\omega$-a.s.
the second term
in the r.h.s.\ of \eqref{festiv} tends to $0$ is the same as in (i).
\epr


\section{Proof of Theorem \ref{phtrmon}}
\label{S3}

In Section \ref{S3.1} we derive a proposition stating that the excursions
away from the interface are exponentially tight in the localized phase. In
Section \ref{S3.2} we use this proposition to prove Theorem \ref{phtrmon}.

\subsection{Tightness of excursions}
\label{S3.1}

We will call the triple $(\alpha,\beta,\mu) \in \CONE \times [1,\infty)$
\emph{weakly localized} if (recall Proposition \ref{p:phtrinfchar} and
(\ref{colawdef}--\ref{freeengs}))
\be{loctriple}
\alpha \in (\alpha^*,\infty) \quad \mbox{and} \quad
\sup_{\nu \geq 1} \nu\left[\phi^\cI(\alpha,\beta;\nu)-\varpi\right]
= \mu\left[\phi^\cI(\alpha,\beta;\mu)-\varpi\right]
\geq \varsigma
\ee
with
\be{kldefs}
\varpi=\tfrac12\log 5 \qquad \hbox{and} \qquad
\varsigma=\tfrac12\log\tfrac95.
\ee
Let $l_{\mu L}$ denote the number of strictly positive excursions in $\pi\in\cW_{\mu L,L}$.
For $k=1,\dots,l_{\mu L}$, let $\tau_k$ denote the length of the $k$-th such excursion in
$\pi$.

\bp{extight}
Let $(\alpha,\beta,\mu)$ be a weakly localized triple. Then for every $C>0$ there exists an
$M_0=M_0(C)$ such that for $M\geq M_0$,
\be{extightlim}
\lim_{L\to\infty} \E \left( \P_{\mu L}^{\omega,\cI} \left(
\sum_{k=1}^{l_{\mu L}} \tau_k 1\{\tau_k\geq M\} \geq C \mu L
\right)\right) = 0.
\ee
\ep

\bpr
Along the way we need the following concentration inequality for the free energy of the
single interface. Let $\phi_{\mu L}^{\omega,\cI} = (1/\mu L)\log Z^{\omega,\cI}_{\mu L,L}$
(recall (\ref{Zinf})).

\bl{conc}
There exist $C_1,C_2>0$ such that for all $\gep>0$, $(\alpha,\beta,\mu) \in \CONE \times
[1,\infty)$ and $L \in \N$,
\be{concest}
\P\left(\big|\phi_{\mu L}^{\omega,\cI}(\alpha,\beta)-\E\left(\phi_{\mu L}^{\omega,\cI}
(\alpha,\beta)\right)\big| \geq \gep\right)
\leq C_1 \exp\left[-\gep^{2}\mu L/C_2 (\alpha+\beta)\right].
\ee
\el

\bpr
See Giacomin and Toninelli \cite{GT06b}. The argument for their single interface model
readily extends to our single interface model.
\epr

\noindent
{\bf Step 1.}
Throughout the proof, $(\alpha,\beta,\mu)$ is a weakly localized triple and $C\in (0,1)$.
Fix $M$. For $\pi\in \cW_{\mu L,L}$, we let
\be{ql}
K_L=K_L(\pi) = \{k\in\{1,\dots,l_{\mu L}\}\colon\,\tau_k\geq M\}.
\ee
We also define
\be{QRKdefs}
\begin{aligned}
\widetilde{\cW}_L &=\Big\{\pi\in\cW_{\mu L,L}\colon\,\sum_{k\in K_L}
\tau_k \geq C \mu L\Big\},\\
\cQ_L &= \{C\mu L,\dots,\mu L\} \times \{1,\dots,L\} \times
\{1,\dots,\mu L/M\}.
\end{aligned}
\ee
Note that $\widetilde{\cW}_L$ is the union of the events $(A_{s,r,t})_{(s,r,t)\in \cQ_L}$
with
\be{defi}
A_{s,r,t} = \Big\{\sum_{k\in K_L} \tau_k = s\Big\}
\cap \Big\{\sum_{k\in K_L} \tau_k/\mu_k = r \Big\}
\cap \big\{|K_L| = t\big\},
\ee
where $\mu_k$ is the number of steps divided by the number of horizontal steps
in the $k$-th strictly positive excursion. Let $v=(v_k^1,v_k^2)_{k\in K_L}$
denote the starting points and ending points of the successive positive excursions
of length $\geq M$. If $V_L$ denotes all possible values of $v$, then $A_{s,r,t}$
is the union of the events $(A_{s,r,t}^v)_{v\in V_L}$. We will estimate
$\E(\P_{\mu L}^{\omega,\cI}
(A_{s,r,t}^v))$.

\medskip\noindent
{\bf Step 2.}
We want to bound from above the quantity
\be{newq}
\E\left( \P_{\mu L}^{\omega,\cI}\left(A_{s,r,t}^v\right)\right)
= \E\left(\left(\textstyle{\sum_{\pi \in A_{s,r,t}^v}}
e^{-H^{\omega,\cI}_{\mu L}(\pi)}\right)\,
e^{-\mu L \phi_{\mu L}^{\omega,\cI}}\right).
\ee
To that end, we concatenate the excursions of $\pi$ in $[v^2_{k-1},v^1_k]$, $k\in
\{1,\dots,t\}$, as follows. Since these excursions start and end at the interface,
either with a horizontal step or with a vertical step up, we concatenate them by adding
a strictly positive excursion of 3 steps between them. The latter has no effect on the
Hamiltonian. We also concatenate the strictly positive excursions in $[v^1_k,v^2_k]$,
$k\in\{1,\dots,t\}$, by adding 1 horizontal step between them. Thus, if we abbreviate
$S_1=\mu L-s+3t$ and $S_2=L-r+t$, and if we denote by $\omega_v$ the concatenation of
the $\omega_i$ in $[v^2_{k-1},v^1_k]$, $k\in\{1,\dots,t\}$, then we have
\be{es1}
\textstyle{\sum_{\pi \in A_{s,r,t}^v}
e^{-H^{\omega,\cI}_{\mu L}(\pi)}} \leq
\textstyle{\sum_{\pi \in \cW_{S_1,S_2}}
e^{-H^{\omega_v,\cI}_{S_1}(\pi)}\
K(s+t,r+t)},
\ee
where $K(a,b)$ is the number of strictly positive excursions of length $a$ that make
$b$ horizontal steps. A standard superadditivity argument gives
\be{supaddK}
K(s+t,r+t) \leq e^{(s+t)\hat\kappa(\frac{s+t}{r+t})}
\ee
with $\hat\kappa$ the entropy function defined in (\ref{kappamudef}). Put $\hat{\mu}
=S_1/S_2$. Then with \eqref{supaddK} we can rewrite \eqref{es1} as
\be{esti3}
\textstyle{\sum_{\pi \in A_{s,r,t}^v}
e^{-H^{\omega,\cI}_{\mu L}(\pi)}} \leq
e^{S_1\; \phi^{\omega_v,\cI}_{\hat{\mu} S_2}}\,
e^{(s+t)\,\hat\kappa(\frac{s+t}{r+t})}.\\
\ee
At this stage, two cases need to be distinguished. Fix $\eta>0$.

\medskip\noindent
[Case $S_1\geq \eta L$.]
Let
\be{A1A2intr}
\begin{aligned}
A_1 &= \left\{\phi_{\mu L}^{\omega,\cI}\leq
\E\big(\phi_{\mu L}^{\omega,\cI}\big)-\gep\right\},\\
A_2 &= \big\{\phi^{\omega_v,\cI}_{\hat{\mu} S_2}\geq
\E\left(\phi_{\hat{\mu}S_2}^{\omega_v,\cI}\big)+\gep\right\}.
\end{aligned}
\ee
Since $\mu L\geq\hat{\mu}S_2=S_1\geq \eta L$, Lemma \ref{conc} gives the large
deviation inequality
\be{eq1N}
\max\{\P(A_1),\P(A_2)\} \leq C_1 \exp\left[-\gep^{2}\eta L/C_2 (\alpha+\beta)\right].
\ee
By superadditivity, we have $\E(\phi^{\omega_v,\cI}_{\hat{\mu} S_2}) \leq \sup_{L\in\N}
\E(\phi^{\omega_v,\cI}_{\hat{\mu}L}) = \phi^\cI(\hat{\mu})$. Moreover, for $L$ large
enough, we have $\E(\phi_{\mu L}^{\omega,\cI}) \geq \phi^{\cI}(\mu)-\gep$. Hence,
it follows from (\ref{esti3}--\ref{eq1N}) that
\be{esti1}
\begin{aligned}
\E\left( \P_{\mu L}^{\omega,\cI}\left(A_{s,r,t}^v\right)\right)
&= \E\left(\left(\textstyle{\sum_{\pi \in A_{s,r,t}^v}}
e^{-H^{\omega,\cI}_{\mu L}(\pi)}\right)\,
e^{-\mu L \phi_{\mu L}^{\omega,\cI}}\right)\\
&\leq \mathbb{P}(A_1)+\mathbb{P}(A_2)+\E\left(\left(\textstyle{\sum_{\pi \in A_{s,r,t}^v}}
e^{-H^{\omega,\cI}_{\mu L}(\pi)}\right)\,
e^{-\mu L \phi_{\mu L}^{\omega,\cI}}\ \ind_{A_1^c\cap A_2^c}\right)\\
&\leq 2 C_1 e^{-\gep^{2}\eta L/C_2(\alpha+\beta)}
+e^{S_1(\phi^{\cI}(\hat{\mu})+\gep)} \,
e^{-\mu L(\phi^{\cI}(\mu)-2\gep)}\, e^{(s+t)\,\hat\kappa(\frac{s+t}{r+t})}.
\end{aligned}
\ee

\medskip\noindent
[Case $S_1\leq \eta L$.]
Note that, for $(\alpha,\beta)\in \CONE$, the trivial inequality $\phi^{\omega,\cI}_{\mu L}
\leq\alpha+\hat{\kappa}(\mu)$ (compare with (\ref{phiublb})) and Lemma \ref{l:kamu} (iii)
are sufficient to assert that there exists an $R_\alpha>0$ such that $\phi^{\omega,
\cI}_{\mu L}\leq R_\alpha$ for all $\mu\geq 1$, $L\in\N$ and $\omega$. Therefore also
$\phi^{\omega_v,\cI}_{\hat{\mu}S_2}\leq R_\alpha$ for all $\hat{\mu}\geq 1$, $S_2\in\N$
and $\omega_v$, and so it follows from (\ref{esti3}--\ref{eq1N}) that
\be{esti2}
\begin{aligned}
\E\left( \P_{\mu L}^{\omega,\cI}\left(A_{s,r,t}^v\right)\right)
&= \E\left(\left(\textstyle{\sum_{\pi \in A_{s,r,t}^v}}
e^{-H^{\omega,\cI}_{\mu L}(\pi)}\right)\,
e^{-\mu L \phi_{\mu L}^{\omega,\cI}}\right)\\
&=\mathbb{P}(A_1)+\E\left(\left(\textstyle{\sum_{\pi \in A_{s,r,t}^v}}
e^{-H^{\omega,\cI}_{\mu L}(\pi)}\right)\,
e^{-\mu L \phi_{\mu L}^{\omega,\cI}}\ \ind_{A_1^c}\right)\\
&\leq C_1 e^{-\gep^{2}\mu L/C_2 \beta}
+e^{S_1 R_{\alpha}} \,
e^{-\mu L(\phi^{\cI}(\mu)-2\gep)}\, e^{(s+t)\,\hat\kappa(\frac{s+t}{r+t})}.
\end{aligned}
\ee

\medskip\noindent
{\bf Step 3.}
To bound the quantity $S_1\phi^\cI(\hat{\mu})=S_1\phi^\cI(S_1/S_2)$ in \eqref{esti1},
we define $x=s/\mu L$ and $\tilde\mu=s/r$. Then $S_1=\mu L (1-x)+3t$ and
$S_2=L(1-x\mu/\tilde\mu)+t$. Since $(\alpha,\beta,\mu)$ is a weakly localized triple
(recall (\ref{loctriple})), we have $S_1\phi^\cI(S_1/S_2)\leq \mu S_2 \phi^\cI(\mu)
+\varpi(S_1-\mu S_2)$, with $\varpi$ given in (\ref{kldefs}). This can be further
estimated by
\be{equaplus}
S_1 \phi^\cI(S_1/S_2)
\leq \mu L \phi^\cI(\mu) -\varpi x\mu L + x \frac{\mu^2}{\tilde{\mu}}
L [\varpi-\phi^\cI(\mu)]
+ t \left[\mu \phi^\cI(\mu)+\varpi(3-\mu)\right]
\ee
\be{equa2*}
\hspace{-6.2cm}\leq \mu L \phi^\cI(\mu) - \tfrac56\varpi x\mu L,
\ee
where we use that $\varpi-\phi^\cI(\mu) \leq 0$, $t \leq \mu L/M$, and $M$ is large
enough (by assumption). Next, let $\mu_0$ be such that $\hat\kappa(\nu)\leq \frac{\varpi}{2}$
for all $\nu\geq\frac{\mu_0}{2}$ (which is possible by Lemma \ref{l:kamu}(iii)).

\medskip\noindent
[Case $\tilde{\mu}\geq \mu_0$.] Since $s \geq c\mu L$
and $t\leq \mu L/M$, if $\tilde{\mu}\geq \mu_0$, then $(s+t)/(r+t)\geq \tilde{\mu}/(1+t/r)
\geq \frac{\mu_0}{2}$. Since $s+t\leq x\mu L+ \mu L/M$, it follows from (\ref{equa2*}) that for
$M$ large enough,
\be{cha2}
S_1\, \phi^\cI(S_1/S_2)+(s+t)\,
\hat\kappa\left(\frac{s+t}{r+t}\right)
\leq \mu L \phi^\cI(\mu) - \tfrac16\varpi x\mu L.
\ee

\medskip\noindent
[Case $\tilde{\mu}\leq \mu_0$.] For $\tilde{\mu}<\mu_0$, we first note that, by Lemma \ref{l:kamu}(iv)
and (\ref{loctriple}), there exists a $z>0$ such that
\be{aciter}
\sup_{y\geq 1} y[\hat\kappa(y)-\varpi] = \mu (\phi^\cI(\mu)-\varpi) - z.
\ee
Therefore, picking $y=(s+t)/(r+t)$ in (\ref{aciter}), we get
\be{equa2}
\begin{aligned}
(s+t)\hat\kappa\left(\frac{s+t}{r+t}\right)
&\leq \mu (r+t)\phi^\cI(\mu)+\varpi[(s+t)-\mu(r+t)]-z (r+t)\\
&\leq \mu r \phi^\cI(\mu)+\varpi(s-\mu r)- z r + \frac{C'L}{M}\\
&= x\frac{\mu^2 L}{\tilde{\mu}} \phi^\cI(\mu) +
\varpi x \mu L\left(1-\frac{\mu}{\tilde{\mu}}\right)
-z\frac{x\mu L}{\tilde{\mu}} + \frac{C'L}{M},
\end{aligned}
\ee
where $C'=C'(\mu)>0$ and the second line uses $t\leq \mu L/M$. Summing \eqref{equaplus}
and \eqref{equa2}, we obtain that for $M$ large enough,
\be{equa3}
S_1 \phi^\cI(S_1/S_2) + (s+t)\hat\kappa\left(\frac{s+t}{r+t}\right)
\leq \mu L \phi^\cI(\mu)-z\frac{x\mu L}{\tilde{\mu}} +\frac{C'L}{M}.
\ee
Since $x\geq C$ and $\tilde{\mu}\leq \mu_0$, we can choose $M$ large enough such that the
r.h.s.\ of \eqref{equa3} is bounded from above by $\mu L \phi^\cI(\mu)-\frac{z C}{2\tilde{\mu_0}}
\mu L$.
\medskip

\noindent
Setting $C_3=\inf\{z C/2\tilde{\mu_0},\varpi C/6\}$, we obtain that the r.h.s.\ of
\eqref{cha2} and \eqref{equa3} are both bounded from above by $\mu L \phi^\cI(\mu) -C_3\mu L$.

\medskip\noindent
{\bf Step 4.}
In the case $S_1\geq \eta L$, \eqref{esti1} becomes
\be{esti3*}
\E\left(\P_{\mu L}^{\omega,\cI}(A_{s,r,t}^v)\right)
\leq 2C_1 e^{-\gep^{2}\eta L/C_2(\alpha+\beta)} + e^{\mu L (-C_3+3\gep)},
\ee
while in the case $S_1\leq \eta L$ we choose $\eta\leq C_3/2 R_\alpha$, and \eqref{esti2}
becomes
\be{esti4}
\E\left(\P_{\mu L}^{\omega,\cI}(A_{s,r,t}^v)\right)
\leq C_1 e^{-\gep^2\mu L/C_2(\alpha+\beta)} + e^{\mu L (-\frac12C_3+2\gep)}.
\ee
Thus, there are $C_4,C_5>0$ such that, for $\gep$ small enough,
\be{estext}
\E\left(\P_{\mu L}^{\omega,\cI}(A_{s,r,t}^v)\right)\leq C_4 e^{-C_5 \mu L}.
\ee
Therefore it remains to estimate the number of possible values of $(s,r,t)$ and $v$.
Since $(s,r,t)\in\{1,\dots,\mu L\}^3$, there are at most $(\mu L)^3$ such triples. At
fixed $t$, choosing $v$ amounts to choosing $t$ starting points and $t$ ending points
for the excursions, which can be done in at most $\binom{\mu L}{2t}\leq\binom{\mu L}{2\mu L/M}$
ways when $M\geq 4$. By Stirling's formula there exists a $C''>0$ such that for all
$M \geq 4$ and $L \in \N$,
\be{stir}
\binom{\mu L}{2\mu L/M} \leq C''\sqrt{\mu L}\,e^{d(M)\mu L}
\quad \mbox{ with } \quad
d(M)=-\tfrac{2}{M}\log\left(\tfrac2M \right)-\left(1-\tfrac{2}{M}\right)
\log\left(1-\tfrac{2}{M}\right).
\ee
Since $\lim_{M\to\infty} d(M)=0$, we have $d(M)\leq C_5/2$ for some $C_5>0$ and $M$
large enough.
Therefore
\be{stircons}
\sum_{(s,r,t)\in \cQ_L} \sum_v
\E\left(\P_{\mu L}^{\omega,\cI}(A_{s,r,t}^v)\right)
\leq C_4\,C''\,(\mu L)^{7/2}\, e^{-C_5 \mu L/2}.
\ee
Since the l.h.s.\ equals the expectation in \eqref{extightlim}, we have
completed the proof.
\epr

\subsection{Proof of Theorem \ref{phtrmon}}
\label{S3.2}

The proof uses Lemma \ref{l:ka} and Proposition \ref{extight}.

\medskip\noindent
{\bf Step 1.}
Since $\alpha\mapsto\beta_c(\alpha)$ is non-decreasing and bounded from above
(by Theorem \ref{phtrcurve}(ii)), it converges to a limit $\beta^*$ as $\alpha
\to\infty$. Equation \eqref{phinfcr}, which gives a criterium for the localization
of the copolymer at $AB$-interfaces, implies that
\be{curvecrit}
\sup_{\mu \geq 1} \mu[\phi^{\cI}(\alpha,\beta_c(\alpha);\mu)-\varpi] =
\varsigma
\qquad \forall\,\alpha \geq 0
\ee
with $\varpi,\varsigma$ defined in (\ref{kldefs}) (recall the energy shift made in
(\ref{colawdef}--\ref{freeengs})). Lemma \ref{mulim} asserts that $\phi^{\cI}(\alpha,
\beta_c(\alpha);\mu)$ tends to zero as $\mu\to\infty$, uniformly in $\alpha\geq 0$.
Since $\phi^{\cI}(\alpha,\beta_c(\alpha);1)=0$ for all $\alpha>0$ (the path lies
in the interface), it follows that the supremum in \eqref{curvecrit} is attained at
some $\mu_\alpha>1$. Therefore, if we can prove that
\be{phidiff}
\phi^{\cI}(\alpha',\beta_{c}(\alpha);\mu_\alpha)
>\phi^{\cI}(\alpha,\beta_{c}(\alpha);\mu_\alpha) \qquad \forall\,\alpha>\alpha',
\ee
then
\be{ineqchain}
\sup_{\mu \geq 1}
\mu[\phi^{\cI}(\alpha',\beta_{c}(\alpha);\mu)-\varpi]
\geq
\mu_\alpha[\phi^{\cI}(\alpha',\beta_{c}(\alpha);\mu_\alpha)-\varpi]
> \mu_\alpha[\phi^{\cI}(\alpha,\beta_{c}(\alpha);\mu_\alpha)-\varpi]
=\varsigma,
\ee
and hence $\beta_{c}(\alpha)>\beta_c(\alpha')$.

\medskip\noindent
{\bf Step 2.}
Let $\alpha'>\alpha$ and
\be{diffel}
\begin{aligned}
D &= \phi^{\cI}(\alpha',\beta_{c}(\alpha);\mu_\alpha)
-\phi^{\cI}(\alpha,\beta_{c}(\alpha);\mu_\alpha)\\
&= \lim_{L\to\infty} \frac{1}{\mu_\alpha L} \left[
\log \sum_{\pi\in\cW_{\mu_\alpha L,L}} e^{-H_{\mu_\alpha
L}^{\omega,\cI}
(\alpha',\beta_c(\alpha);\pi)}
- \log \sum_{\pi\in\cW_{\mu_\alpha L,L}} e^{-H_{\mu_\alpha
L}^{\omega,\cI}
(\alpha,\beta_c(\alpha);\pi)}\right]\\
&= \lim_{L\to\infty} \frac{1}{\mu_\alpha L}
\log \E_{\mu_\alpha L}^{\omega,\cI}
\left(\exp\left[(\alpha-\alpha')\sum_{i=1}^{\mu_\alpha L}
1\{\omega_i=A,(\pi_{i-1},\pi_i)\leq 0\}\right]\right),
\end{aligned}
\ee
where the expectation is w.r.t.\ the law of the copolymer with parameters $\alpha$
and $\beta_c(\alpha)$, which are both suppressed from the notation. For $\gep>0$,
let $A_{\gep,L}=\{\pi\colon\,\sum_{i=1}^{\mu_\alpha L} 1\{\omega_i=A,(\pi_{i-1},\pi_i)
\leq 0\} \geq \gep\mu_\alpha L\}$. Then we may estimate
\be{Dest}
D \geq \limsup_{L\to\infty} \frac{1}{\mu_\alpha L} \log \Big[
e^{(\alpha-\alpha')\gep\mu_\alpha L}\, \P_{\mu_\alpha
L}^{\omega,\cI}(A_{\gep,L})
+ \P_{\mu_\alpha L}^{\omega,\cI}([A_{\gep,L}]^c)\Big].
\ee
We will prove that, for $\gep$ small enough, there is a subsequence $(L_m)_{m\in\N}$
such that $\lim_{m\to\infty}$ $\P_{\mu_\alpha L_m}^{\omega,\cI}([A_{\gep,L_m}]^c)=0$
$\omega$-a.s. This willl imply that $D\geq (\alpha-\alpha')\gep$ and complete the
proof.

\medskip\noindent
{\bf Step 3.}
We recall that $l_{\mu_\alpha L}$ denotes the number of strictly positive excursions
in $\pi\in\cW_{\mu_\alpha L,L}$. By Proposition \ref{extight}, $\omega$-a.s.,
$\P_{\mu_\alpha L}^{\omega,\cI}(\sum_{k=1}^{l_{\mu_\alpha L}}\tau_k 1\{\tau_k\geq M\}
\geq C\mu_\alpha L)$ tends to zero as $L\to\infty$ along a subsequence.
Moreover, $\omega$-a.s., $\sum_{i=1}^{\mu_\alpha L} 1\{\omega_i=A\} \geq \frac12\mu_\alpha L
- C \mu_\alpha L$ for $L$ large enough. Thus, putting $s=\frac12-2C-\gep$, for $L$ large
enough we have the inclusion
\be{Acompincl}
[A_{\gep,L}]^c \subset
\left\{\sum_{k=1}^{l_{\mu_\alpha L}} \tau_k 1\{\tau_k\geq M\} \geq
C\mu_\alpha L\right\}
\, \cup\, \Bigg\{\left\{\sum_{i=1}^{\mu_\alpha L}1\{\omega_i=A\}
1\{\Theta_i^M=1\}
\geq s\mu_\alpha L\right\} \cap [A_{\gep,L}]^c\Bigg\},
\ee
where $\Theta_i^M$ is the indicator of the event the $i$-th step lies in a strictly positive
excursion of length $\leq M$.

From now on we fix $C=\frac18$ and $\gep\leq\frac18$, implying that $s\geq \frac18$. We
also fix $M$ such that Proposition \ref{extight} holds for $C=\frac18$. The proof will
be completed once we show that
\be{Beventlim}
\lim_{L\to\infty} \P_{\mu_\alpha L}^{\omega,\cI}(B_{\gep,L}) = 0
\qquad \omega-a.s.,
\ee
where
\be{Bevendef}
B_{\gep,L} = \left\{\pi\colon\,\sum_{i=1}^{\mu_\alpha L}1\{\omega_i=A\}
1\{\Theta_i^M=1\}
\geq s\mu_\alpha L\right\}\cap [A_{\gep,L}]^c.
\ee
Each path of $B_{\gep,L}$ puts at least $s \mu_\alpha L$ monomers labelled by $A$ in
strictly positive excursions of length $\leq M$ and at most $\gep \mu_\alpha L$ monomers
labelled by $A$ in non-positive excursions.

\medskip\noindent
{\bf Step 4.}
For $\pi \in B_{\gep,L}$, let $\cE_L(\pi)$ label the excursions of $\pi$ that are strictly
positive, have length $\leq M$ and contain at least $1$ monomer labelled by $A$. Abbreviate
$r_L(\pi)=|\cE_L(\pi)| \geq s\mu_\alpha L/M$. Partition $\cE_L(\pi)$ into two parts:
\begin{itemize}
\item[--]
$\cE_L^1(\pi)$: those excursions whose preceding and subsequent
non-positive excursions
do not contain an $A$.
\item[--]
$\cE_L^2(\pi)$: those excursions whose preceding and/or subsequent
non-positive excursions
contain an $A$.
\end{itemize}
The total number of non-positive excursions containing an $A$ is bounded from above by
$\gep \mu_\alpha L$. Since a non-positive excursion can be at most once preceding and
once subsequent, we have $|\cE_L^{1}(\pi)| \geq (s/M -2\gep)\mu_\alpha L$. We will
discard the excursions in $\cE^2_L(\pi)$. Morover, to avoid overlap, we will keep from
$\cE^1_L(\pi)$ only half of the excursions. Call the remainder $\tilde\cE^1_L(\pi)$, and
abbreviate $\tilde r_L(\pi)=|\tilde\cE^1_L(\pi)|$. Then $\tilde r_L(\pi) \geq r\mu_\alpha L$
with $r= (s/2M-\gep)\mu_\alpha L$.

Next, for $\pi\in B_{\gep,L}$, let $\chi(\pi)$ denote the partition of $\{1,\dots,\mu_\alpha L\}$
into $2\tilde r_L(\pi)+1$ intervals, i.e., $(I_t)_{t=0}^{2\tilde r_L}$ with $I_{2(j-1)+1}$ ,
$j\in\{1,2,\dots,\tilde r_L\}$, the interval occupied by the $j$-th excursion of $\tilde
\cE_L^1(\pi)$ and its preceding and subsequent non-positive excursions. The partition $\chi(\pi)$
also contains $2\tilde r_L+1$ integers $(i_t)_{t=0}^{2\tilde r_L}$ with $i_t$, $i\in\{0,1,\dots,
2\tilde r_L\}$, the number of horizontal steps the path $\pi$ makes in $I_t$.

Let $K_L^\omega$ be the set of possible outcomes of $\chi(\pi)$ as $\pi$ runs over $B_{\gep,L}$.
For $\chi \in K_L^\omega$, let $t(\chi)$ denote the family of possible paths over the
even intervals $I_0,I_2,\dots,I_{2\tilde r(\chi)}$. The paths of $t(\chi)$ do not put more
than $\gep \mu_\alpha L$ monomers of type $A$ on or below the interface, put exactly one
excursion of type $1$ in each interval $I_{2j}$, $j\in\{1,\dots,2\tilde r(\chi)\}$, no excursion
of type $1$ in $I_0$ and at most one excursion in $I_{2\tilde r(\chi)}$. For $j\in\{1,\dots,
\tilde r(\chi)\}$, let $t_j(\chi)$ be the set of paths on $I_{2j-1}$ that make $i_{2j-1}$
horizontal steps, perform exactly one excursion of type $1$, and have their preceding and
subsequent non-positive excursions without an $A$. Then we have the formula
\be{PBform}
\P_{\mu_\alpha L}^{\omega,\cI} \big(B_{\gep,L}\big)
= \frac{ \sum_{\chi\in K_L^\omega} \Big[\big(\sum_{\pi' \in t(\chi)}
e^{-H^{\omega,\cI}(\pi')}\big)\,
\prod_{j=1}^{\tilde r(\chi)} \big(\sum_{\pi_j \in t_j(\chi)}
e^{-H^{\omega,\cI}(\pi_j)}\big)\Big]}{\sum_{\pi\in \cW_{\mu_\alpha L,L}
}e^{-H^{\omega,\cI}(\pi)}}.
\ee

\medskip\noindent
{\bf Step 5.}
For $j\in\{1,\dots,\tilde r(\chi)\}$, let $s_j(\chi)$ be the set of non-positive excursions
of $|I_{2j-1}|$ steps of which $i_{2j-1}$ are horizontal. Then we may estimate
\be{excursion}
\begin{aligned}
&\P_{\mu_\alpha L}^{\omega,\cI}\big(B_{\gep,L}\big)
\leq \gep \mu_\alpha L \binom{\mu_\alpha L}{\gep \mu_\alpha L}\\
&\times \frac{\sum_{\chi\in K_L^\omega}
\Big[\big(\sum_{\pi' \in t(\chi)} e^{-H^{\omega,\cI}(\pi')}\big)\,
\prod_{j=1}^{\tilde r(\chi)}
\big(\sum_{\pi_j \in t_j(\chi)} e^{-H^{\omega,\cI}(\pi_j)}\big)\Big]}
{\sum_{\chi\in K_L^\omega}
\Big[\big(\sum_{\pi' \in t(\chi)} e^{-H^{\omega,\cI}(\pi')}\big)\,
\prod_{j=1}^{\tilde r(\chi)}
\big(\sum_{\pi_j \in t_j(\chi)} e^{-H^{\omega,\cI}(\pi_j)}
+\sum_{\pi_j \in s_j(\chi)} e^{-H^{\omega,\cI}(\pi_j)}\big)\Big]}.
\end{aligned}
\ee
Here, the prefactor comes from the fact that a path with more than one non-positive
excursion containing an $A$ may be associated with more than one family $(\chi,t(\chi))$ in
the sum in the denominator of (\ref{PBform}). However, a path $t(\chi)$ cannot have
more than $\gep \mu_\alpha L$ excursions of such type. Since the number of excursions is
bounded from above by $\mu_\alpha L$, we can assert that each path can appear at most
$\gep \mu_\alpha L \binom{\mu_\alpha L}{\gep \mu_\alpha L}$ times in the denominator.

At this stage it suffices to show that there exists a $C>0$, depending only on $\alpha,\alpha'$
and $M$, such that for all $\chi\in K_L^\omega$ and $j\in\{1,\dots,\tilde r(\chi)\}$,
\be{sumHineq}
\sum_{\pi_j \in s_j(\chi)} e^{-H^{\omega,\cI}(\pi_j)} \geq C \sum_{\pi_j \in
t_j(\chi)}
e^{-H^{\omega,\cI}(\pi_j)}.
\ee
Indeed, since $r\geq \mu_\alpha L$ this yields, via (\ref{excursion}),
\be{excur*}
\P_{\mu_\alpha L}^{\omega,\cI}\big(B_{\gep,L}\big)
\leq \gep \mu_\alpha L \binom{\mu_\alpha L}{\gep \mu_\alpha L}\,\,
(1+C)^{-r\mu_\alpha L}.
\ee
For $\gep$ small enough the r.h.s.\ of \eqref{excur*} tends to zero as
$L\to\infty$ because $C>0$, implying (\ref{Beventlim}) as desired.

\medskip\noindent
{\bf Step 6.}
To prove (\ref{excur*}), we note that, since the paths of $s_j(\chi)$ stay
in the lower halfplane, their Hamiltonian is a constant, namely, $H^{\omega,\cI}
(s_j(\chi))=\sum_{i\in I_j}(\alpha 1\{\omega_i=A\}-\beta 1\{\omega_i=B\})$
(recall (\ref{newham})). A path of $t_j(\chi)$ puts at most $M$ steps of $I_j$
in the upper halfplane, and so $\pi_j \in t_j(\chi)$ implies $H^{\omega,\cI}
(\pi_j)\geq H^{\omega,\cI}(s_j(\chi))-\alpha M$. It therefore remains to
compare the cardinalities of $s_j(\chi)$ and $t_j(\chi)$. The number of
strictly positive excursions of length $\leq M$ is some integer, denoted by
$\sharp(M)$. Moreover, on $I_j$ the possible starting points of the excursion of
type $1$ are at most $M$. Indeed, the excursion has to contain all the $\omega_i$
of $I_j$ that are equal to $A$, and hence it must start less than $M$ steps to the
left of the leftmost $i\in I_j$ such that $\omega_i=A$. Thus, we have at most
$M\sharp(M)$ possible excursions of type $1$ in $I_j$ (if we take into account
their starting point). Next, we note that by fixing the starting point and the
shape of the excursions of type $1$, we can create an injection from $t_j(\chi)$ to
$s_j(\chi)$ as follows (see Figure \ref{fig-inj}). If $2r$ is the number of vertical
steps in the fixed excursion of type $1$, then we associate with each path of
$t_j(\chi)$ a path of $s_j(\chi)$ that begins with $r$ vertical steps down before
performing the preceding non-positive excursion, next makes $s$ horizontal steps,
where $s$ is the number of horizontal steps in the excursion of type $1$, next
performs the subsequent non-positive excursion, and afterwards returns to the
interface with $r$ vertical steps.

\begin{figure}
\vspace{1cm}
\begin{center}
\setlength{\unitlength}{0.35cm}
\begin{picture}(10,5)(13,-8)
\put(-2,-2.5){$0$}
\put(0,-2){\line(1,0){17}}
\put(2,-2){\circle*{.35}}
\put(8,-2){\circle*{.35}}
\put(11,-2){\circle*{.35}}
\put(14,-2){\circle*{.35}}
\put(6.9,0.1){\vector(1,-2){1}}
\put(9.9,-4.3){\vector(1,2){1}}
\put(15.1,-4.3){\vector(-1,2){1}}
\put(1.4,-3.2){{\small $b_1$}}
\put(6.4,0.5){$b_2$}
\put(9.2,-5.2){$d_1$}
\put(14.9,-5.2){$d_2$}
{\qbezier(1,-1)(1.5,-1)(2,-1)}
{\qbezier(2,-1)(2,-1.5)(2,-2)}
{\qbezier(3,-2)(3,-2.5)(3,-3)}
{\qbezier(3,-3)(4,-3)(5,-3)}
{\qbezier(5,-3)(5,-2)(5,-2)}
{\qbezier(6,-2)(6,-4)(6,-4)}
{\qbezier(6,-4)(7,-4)(7,-4)}
{\qbezier(7,-4)(7,-6)(7,-6)}
{\qbezier(7,-6)(8,-6)(8,-6)}
{\qbezier(8,-6)(8,-1)(8,-1)}
{\qbezier(8,-1)(9,-1)(9,-1)}
{\qbezier(9,-1)(10,-1)(10,-1)}
{\qbezier(10,-1)(10,1)(10,1)}
{\qbezier(10,1)(11,1)(11,1)}
{\qbezier(11,1)(11,-2)(11,-2)}
{\qbezier(11,-2)(11,-3)(11,-3)}
{\qbezier(11,-3)(12,-3)(12,-3)}
{\qbezier(12,-3)(12,-4)(12,-4)}
{\qbezier(12,-4)(13,-4)(13,-4)}
{\qbezier(13,-4)(13,-3)(13,-3)}
{\qbezier(13,-3)(14,-3)(14,-3)}
{\qbezier(14,-3)(14,-1.5)(14,-1.5)}
\put(20,-2){\line(1,0){17}}
{\qbezier(21,-1)(21.5,-1)(22,-1)}
{\qbezier(22,-1)(22,-1.5)(22,-2)}
{\qbezier(22,-2)(22,-5)(22,-5)}
{\qbezier(22,-5)(23,-5)(23,-5)}
{\qbezier(23,-5)(23,-6)(23,-6)}
{\qbezier(23,-6)(24,-6)(25,-6)}
{\qbezier(25,-6)(25,-5)(25,-5)}
{\qbezier(25,-5)(26,-5)(26,-5)}
{\qbezier(26,-5)(26,-7)(26,-7)}
{\qbezier(26,-7)(27,-7)(27,-7)}
{\qbezier(27,-7)(27,-9)(27,-9)}
{\qbezier(27,-9)(28,-9)(28,-9)}
{\qbezier(28,-9)(28,-5)(28,-5)}
{\qbezier(28,-5)(31,-5)(31,-5)}
{\qbezier(31,-5)(31,-6)(31,-6)}
{\qbezier(31,-6)(32,-6)(32,-6)}
{\qbezier(32,-6)(32,-7)(32,-7)}
{\qbezier(32,-7)(33,-7)(33,-7)}
{\qbezier(33,-7)(33,-6)(33,-6)}
{\qbezier(33,-6)(34,-6)(34,-6)}
{\qbezier(34,-6)(34,-2)(34,-1.5)}
\put(21.4,-6.2){{\small $b_1$}}
\put(28.4,-6.5){$b_2$}
\put(31.2,-4.5){$d_1$}
\put(34.5,-4.5){$d_2$}
\put(22,-5){\circle*{.35}}
\put(28,-5){\circle*{.35}}
\put(31,-5){\circle*{.35}}
\put(34,-5){\circle*{.35}}
\put(12.4,-0.6){\small $r$ steps}
\put(11.3,-0.8){\bigg \}}
\put(23.2,-3.8){\small $r$ steps}
\put(22.2,-3.8){\bigg \}}
\end{picture}
\end{center}
\caption{Injection from $t_j(\chi)$ to $s_j(\chi)$. Here, $(b_1,b_2)$ and $(d_1,d_2)$
label the endpoints of the preceding and subsequent non-positive excursions.}
\label{fig-inj}
\end{figure}

\noindent
We conclude that $|s_j(\chi)| \geq |t_j(\chi)|/M h(M)$,  which allows
us to estimate
\be{Hsumsest}
\sum_{\pi_j \in s_j(\chi)} e^{-H^{\omega,\cI}(\pi_j)}
= |s_j(\chi)|\, e^{-H^{\omega,\cI}(s_j(\chi))}
\geq  \frac{|t_j(\chi)|}{M \sharp(M)}\,e^{-H^{\omega,\cI}(s_j(\chi))}
= C \sum_{\pi_j \in t_j(\chi)} e^{-H^{\omega,\cI}(\pi_j)}
\ee
with $C=e^{-\alpha M}/M h(M)$, proving (\ref{sumHineq}).


\section{Proof of Theorem \ref{phtr2nd}}
\label{S4}

Section \ref{S4ext} states two propositions providing the lower, respectively,
upper bound for $f$ near the critical curve. These two propositions are proved
in Sections \ref{S4.2} and \ref{S4.3}, respectively, and together yield Theorem
\ref{phtr2nd}. Section \ref{Smaxext} contains several lemmas about the maximisers
of the variational problem for $\psi_{AB}$, which are needed in the proofs.

\subsection{Lower and upper bounds on the free energy}
\label{S4ext}

Recall \eqref{newham}. Fix $p\geq p_c$, $\alpha\in (\alpha^*,\infty)$ and $\delta_0>0$
small enough (depending on $p$ and $\alpha$). Abbreviate $I_0=(0,\delta_0] \cap
(0,\alpha-\beta_{c}(\alpha)]$, and for $\delta \in I_0$ define
\be{frenabbr}
\begin{aligned}
\psi_{kl}(a,\delta) &= \psi_{kl}(\alpha,\beta_c(\alpha)+\delta;a),
&a \geq 2,\\
\phi^\cI(\mu,\delta) &= \phi^\cI(\alpha,\beta_c(\alpha)+\delta;\mu),
&\mu\geq 1,
\end{aligned}
\ee
and
\be{Hdifdef}
T_{\alpha}(\delta) =
f(\alpha,\beta_c(\alpha)+\delta;p)-f(\alpha,\beta_c(\alpha);p).
\ee

\bp{p:lb}
There exists a $C_1>0$ such that
\be{lb}
T_\alpha(\delta) \geq C_1 \delta^2 \qquad \forall\,\delta\in I_0.
\ee
\ep

\bp{p:ub}
There exists a $C_2<\infty$ such that
\be{ub}
T_\alpha(\delta) \leq C_2 \delta^2 \qquad \forall\,\delta\in I_0.
\ee
\ep

\subsection{Maximisers of the block pair free energy}
\label{Smaxext}

Lemmas \ref{I}--\ref{acdellim} below are elementary assertions about the existence
and the limiting behaviour of the maximisers in the variational expression for
$\psi_{AB}$ in \eqref{psiinflink}. These lemmas will be needed in the proof of
Propositions \ref{p:lb}--\ref{p:ub} in Sections \ref{S4.2}--\ref{S4.3}.

\medskip\noindent
{\bf Step 1.}
We first show that $a\mapsto\psi_{AB}(a,\delta)$ has a maximiser for
$\delta$ small enough.

\bl{I}
For every $\delta_0>0$ there exists an $a_0>2$ such that, for every
$\alpha>\alpha^*$ and $\delta \in I_0(\alpha)$, there exists an
$a_{\alpha}(\delta)\in (2,a_0]$ satisfying
\be{eq9}
\sup_{a\geq 2}\
\psi_{AB}(a,\delta)=\psi_{AB}(a_{\alpha}(\delta),\delta).
\ee
\el

\bpr
Recall (\ref{frenabbr}). In Lemma \ref{mulim} we showed that, for every $\beta_0>0$,
$\psi_{AB}(a,\alpha,\beta)$ tends to zero as $a\to\infty$ uniformly in $\alpha\geq\beta$
and $\beta\leq \beta_0$. Since $\beta_{c}(\alpha)\leq \beta^*$ for all $\alpha\geq 0$,
there therefore exists an $a_0>2$ such that $\psi_{AB}(a,\delta)<\kappa(a^*,1)$ for
all $a\geq a_0$, $\alpha>\alpha^*$ and $\delta\in I_{0}(\alpha)$. By \cite{dHW06},
Theorem 1.4.2, we have $\sup_{a\geq 2}\ \psi_{A,B}(a,\delta)>\kappa(a^*,1)$
for all $\delta> 0$ and $\alpha>\alpha*$. This implies
\be{fest1}
\sup_{a\geq 2}\ \psi_{AB}(a,\delta)
= \sup_{2\leq a\leq a_0} \psi_{AB}(a,\delta) \qquad
\forall\,\alpha>\alpha^*,\,
\delta \in I_0(\alpha).
\ee
For $\delta$ fixed, $a\mapsto\psi_{AB}(a,\delta)$ is continuous on $[2,\infty)$ and
$\psi_{AB}(2,\delta)=0$. Therefore there exists an $a_{\alpha}(\delta)\in (2,a_0]$ such that
the l.h.s.\ of \eqref{fest1} is equal to $\psi_{A,B}(a_{\alpha}(\delta),\delta)$.
\epr

\medskip\noindent
{\bf Step 2.} Let
\be{Qdef}
\cQ^{\alpha}_{\delta,\mu_0}=\big\{(c,\mu)\colon\, 0\leq c\leq \mu,\,
\mu\geq \mu_0,\, a_{\alpha}(\delta)-c \geq 2-c/\mu\big\}
\ee
and
\be{eq2*}
H(c,a,\mu,\delta)
=\frac{1}{a}\big[c \phi^\cI(\mu,\delta)+(a-c)\kappa(a-c,1-c/\mu)\big].
\ee
Then, by Lemma \ref{l:ka}(ii), we can assert that there exists a unique pair
$(c_\alpha(\delta),\mu_{\alpha}(\delta))\in\cQ^{\alpha}_{\delta,1}$ satisfying
$\psi_{AB}(a_{\alpha}(\delta),\delta)=H(c_\alpha(\delta),a_{\alpha}(\delta),
\mu_{\alpha}(\delta),\delta)$.

\bl{II}
For every $\delta_0>0$ there exists a $\mu_0>1$ such that $(c_\alpha(\delta),
\mu_{\alpha}(\delta))\in\cQ^{\alpha}_{\delta,1}\setminus\cQ^{\alpha}_{\delta,\mu_0}$
for all $\alpha>\alpha^*$ and $\delta\in I_{0}(\alpha)$.
\el

\bpr
Prior to (\ref{fest1}) we noted that $\psi_{AB}(a_{\alpha}(\delta),\delta)>\kappa(a^*,1)$.
We will show that there exists a $\mu_0>1$ such that $H(c,a_{\alpha}(\delta),\mu,\delta)
\leq \kappa(a^*,1)$ for all $\alpha>\alpha^*$, $\delta\in I_{0}(\alpha)$ and $(c,\mu)\in
\cQ^{\alpha}_{\delta,\mu_0}$. This goes as follows. In Lemma \ref{mulim}(i) we showed that
$\phi^\cI(\mu,\delta)$ tends to zero as $\mu\to\infty$, uniformly in $\alpha>\alpha^*$
and $\delta\in I_{0}(\alpha)$. Therefore there exists a $\mu_0\geq 1$ such that $\phi^\cI
(\mu,\delta)<\frac12 \kappa(a^*,1)$ for all $\mu\geq \mu_0$, $\alpha>\alpha^*$ and
$\delta\in I_{0}(\alpha)$.

\bl{tfix}
There exists an $M>0$, depending on $a_0$, such that $\kappa(a,b)\leq \kappa(a^*,1)+M (1-b)$
for all $(a,b)\in \DOM$ (recall {\rm (\ref{DOMdef})}) satisfying $a\leq a_0$ and $\frac12
\leq b\leq 1$.
\el

\bpr
This is easily proved via Lemma \ref{l:ka}(ii), which says that $(a,b)\mapsto\kappa(a,b)$
is analytic on the interior of $\DOM$, and the equality $\kappa(a,a-1)=0$ for all $a\geq 2$.
\epr

\noindent
We now choose $\mu_0$ large enough so that $\mu>2a_0$ and $M a_0/\mu \leq \frac12\kappa(a^*,1)$.
Thus, for $(c,\mu)\in\cQ^{\alpha}_{\delta,\mu_0}$ we have $c/\mu \leq a_0/\mu_0\leq \frac12$,
which entails $\frac12\leq 1-c/\mu\leq 1$. Therefore, $(a_{\alpha}(\delta)-c,1-c/\mu)$ satisfies
the assumptions of Lemma \ref{tfix} and
\begin{equation}
\begin{aligned}
H(c,a_{\alpha}(\delta),\mu,\delta)
&\leq \frac{1}{a_\alpha(\delta)}\, \left[c\tfrac12\kappa(a^*,1)
+(a_\alpha(\delta)-c)\big(\kappa(a^*,1)+Mc/\mu\big)\right]\\
&\leq \kappa(a^*,1)+ \frac{1}{a_\alpha(\delta)}\,
c\,\left[M a_0/\mu - \tfrac12\kappa(a^*,1)\right]
\leq \kappa(a^*,1).
\end{aligned}
\end{equation}
\epr

\noindent
{\bf Step 3.}
We next show that $a\mapsto\psi_{AB}(a,0)$ has a unique maximiser.

\bl{III}
For every $\alpha\geq\alpha^*$, $\sup_{a\geq 2}\psi_{AB}(a,0)=\kappa(a^*,1)$ and is achieved
uniquely at $a=a^*$. Consequently, for $\alpha\geq\alpha^*$ and $\beta=\beta_c(\alpha)$, the
supremum in \eqref{psiinflink} is achieved uniquely at $c=0$.
\el

\bpr
Since $(\alpha,\beta_{c}(\alpha))\in \cL$, \cite{dHW06}, Theorem 1.4.2, tells us that
$\sup_{a\geq 2}\psi_{AB}(a,0)\leq \kappa(a^*,1)$. Moreover, $\psi_{AB}(a^{*},0)\geq
\kappa(a^*,1)$, and therefore
\be{int}
\sup_{a\geq 2}\psi_{AB}(a,0)=\kappa(a^*,1)=\psi_{AB}(a^*,0).
\ee
Now, pick $a\geq 2$ such that $\psi_{AB}(a,0)=\kappa(a^*,1)$ and recall that $\DOM(a)$ in
(\ref{DOMadef}) is the domain of the variational problem for $\psi_{AB}(a,0)$. We argue
by contradiction. Suppose that there exist $c,b>0$ such that $(c,b)\in \DOM(a)$ and
\be{fest9}
\psi_{AB}(a,0)=\kappa(a^*,1)=\frac{1}{a}\,
\left[c\phi^\cI(c/b,0)+(a-c)\kappa(a-c,1-b)\right].
\ee
Then
\be{eq4}
\frac{1}{a}\,\Big\{(c/b)\,\left[\phi^\cI(c/b,0)-\kappa(a^*,1)\right]-(a/b-c/b)\,
\left[\kappa(a^*,1)-\kappa(a-c,1-b)\right]\Big\}=0.
\ee
However, $(c/b)\,[\phi^\cI(c/b,0)-\kappa(a^*,1)]\leq\varsigma$ by Proposition
\ref{p:phtrinfchar}. Moreover, by \cite{dHW06}, Equation (2.3.3), we have
\be{fest10}
g(\nu)=\nu\,\left[\kappa(a^*,1)
- \sup_{2/(\nu+1)\leq b\leq 1}\kappa(b\nu,1-b)\right] > \varsigma
\qquad \forall\,\nu\geq 1.
\ee
Pick $\nu=(a-c)/b$ to make the l.h.s.\ of (\ref{eq4}) strictly negative.
Then the equality in (\ref{eq4}) cannot occur with $b>0$ and $c>0$.
Consequently, the only way to obtain (\ref{eq4}) is to take $c=0$ and $a=a^*$.
\epr

\medskip\noindent
{\bf Step 4.}
Fix $\alpha>\alpha^*$ and $\delta_0>0$. For $\delta \in I_0(\alpha)$, the quantity
$a_{\alpha}(\delta)$ may not be unique, which is why from now on we take its minimum value.
We next prove that $(a_{\alpha}(\delta),c_\alpha(\delta))$ tends to $(a^*,0)$ as
$\delta\da 0$. In what follows, $(\delta_n)_{n\in\N}$ is a sequence in $I_{0}(\alpha)$
such that $\lim_{n\to\infty}\delta_n=0$.

\bl{Psietphi1}
Let $(a_n)_{n\in\N}$ and $(\mu_n)_{n\in\N}$ be such that $\lim_{n\to\infty} a_n=a\geq 2$
and $\lim_{n\to\infty} \mu_n=\mu\geq 1$. Then $\lim_{n\to\infty} \psi_{AB}(a_n,\delta_n)
=\psi_{A,B}(a,0)$ and $\lim_{n\to\infty}\phi^\cI(\mu_{n},\delta_{n})=\phi^\cI(\mu,0)$.
\el

\bpr
A simple computation gives that $\psi_{AB}(a,\delta)-\psi_{AB}(a,0)\leq \delta$ for all
$a\geq 2$ (recall \eqref{frenabbr}). This allows us to write the inequality
\be{psietphi}
\begin{aligned}
|\psi_{AB}(a_{n},\delta_{n})-\psi_{AB}(a,0)|
&=|\psi_{AB}(a_{n},\delta_{n})-\psi_{AB}(a_{n},0)|+|\psi_{AB}(a_{n},0)-\psi_{AB}(a,0)|\\
&\leq \delta_{n}+|\psi_{AB}(a_{n},0)-\psi_{AB}(a,0)|.
\end{aligned}
\ee
Since $a\mapsto \psi_{A,B}(a,0)$ is continuous (recall Lemma \ref{psiprop}(i)), the r.h.s.\
of \eqref{psietphi} tends to zero as $n\to\infty$. This yields the claim for $\psi_{AB}$.
The same proof gives the claim for $\phi^\cI$.
\epr

\medskip\noindent
{\bf Step 5.} Finally, we obtain the convergence of  $a_\alpha(\delta)$ and $c_\alpha(\delta)$
as $\delta\downarrow 0$.

\bl{acdellim}
(i) $\lim_{\delta \downarrow 0} a_\alpha(\delta) = a^*$.\\
(ii) $\lim_{\delta \downarrow 0} c_\alpha(\delta) = 0$.
\el

\bpr
(i) The family $(a_{\alpha}(\delta))_{\delta\in I_0(\alpha)}$ is bounded. We show that the only
possible limit of its subsequences is $a^*$. Assume that $a_{\delta_{n}} \to a_{\infty}$ as
$n\to\infty$, with $a_{\infty}\in [2,a_{0}]$. Since $\delta \mapsto\psi_{A,B}(a_{\alpha}
(\delta),\delta)$ is non-decreasing, we get
\be{inter}
\psi_{AB}(a_{\delta_{n}},\delta_{n})-\psi_{AB}(a^*,0) \geq 0.
\ee
Lemma \ref{Psietphi1} tells us that the r.h.s.\ of \eqref{inter} tends to
$\psi_{AB}(a_{\infty},0)-\psi_{AB}(a^*,0)$ as $n\to\infty$. Thus, $\psi_{AB}(a_{\infty},0)
\geq \psi_{AB}(a^*,0)$ and, since  $a^*$ is the unique maximiser of $\psi_{A,B}(a,0)$
(by Lemma \ref{III}), we obtain that $a_{\infty}=a^{*}$. This implies that
$a_{\alpha}(\delta)$ tends to $a^*$ as $\delta\downarrow 0$.

\medskip\noindent
(ii) The family $(c_\alpha(\delta))_{\delta\in I_{0}}$ is bounded, because $c_\alpha
(\delta)\leq a_{\alpha}(\delta)-1\leq a_{0}-1$ for every $\delta\in I_0$. Assume that
$c_\alpha(\delta_{n}) \to c_\infty$ as $n\to\infty$. Since $a_\alpha(\delta_{n})
\to a^{*}$, we necessarily have $c_\infty\leq a^* -1$. Moreover, $(\mu_{\alpha}
(\delta_{n}))_{n\in\N}$ is bounded above by $\mu_{0}$ (by Lemma \ref{II}). Therefore,
we can pick a subsequence satisfying $\mu_{\alpha}(\delta_{n})\to \mu_{\infty}$ as
$n\to\infty$. We now recall (\ref{eq2*}) and write
\be{nouv}
\begin{aligned}
\psi_{AB}(a_\alpha(\delta_n),\delta_n)
&= \frac{1}{a_\alpha(\delta_n)}\,c_\alpha(\delta_n)
\phi^\cI(\mu_{\alpha}(\delta_n),\delta_n)\\
&\qquad + \frac{1}{a_\alpha(\delta_n)}
\Big[\big(a_{\delta_{n}}-c_\alpha(\delta_{n})\big)\,\kappa\Big(a_{\alpha}
(\delta)-c_\alpha(\delta_n),1-c_\alpha(\delta_n)/\mu\Big)\Big].
\end{aligned}
\ee
Let $n\to\infty$. Then Lemma \ref{Psietphi1} tells us that
\be{nouv1}
\psi_{AB}(a^*,0)
= \frac{1}{a^*} \Big[c_\infty \phi^\cI(\mu_{\infty},0)+
(a^*-c_\infty)\,\kappa\Big(a^*-c_\infty,1-c_\infty/\mu_{\infty}\Big)\Big].
\ee
Therefore Lemma \ref {III} gives that $c_\infty=0$ and consequently $c_\alpha(\delta)$
tends to $0$ as $\delta\da 0$.
\epr

\subsection{Proof of Proposition \ref{p:lb}}
\label{S4.2}

\bpr
Along the way we need the following. Let $\partial\phi^\cI/\partial\beta^+$ and
$\partial\phi^\cI/\partial \beta^-$ denote the right- and left-derivative of
$\phi^\cI$, respectively.

\bl{Psietphi1*}
For all $\mu\geq 1$ and $\alpha,\beta\geq 0$ such that $\phi^{\cI}(\alpha,\beta;\mu)
>\hat\kappa(\mu)$,
\be{philoc}
\frac{\partial \phi^{\cI}}{\partial \beta^{+}}(\alpha,\beta;\mu)>
\frac{\partial \phi^{\cI}}{\partial \beta^{-}}(\alpha,\beta;\mu)>0.
\ee
\el

\bpr
Use that $\phi^{\cI}(\alpha,\beta;\mu)$ is convex in $\beta$ and that
$\phi^{\cI}(\alpha,\beta;\mu) \geq\phi^{\cI}(\alpha,0;\mu)=\hat\kappa(\mu)$
for all $\beta\geq 0$.
\epr

\noindent
What Lemma \ref{Psietphi1*} says is that the localized phase of $\phi^{\cI}(\alpha,
\beta;\mu)$ for fixed $\mu$ corresponds to pairs $(\alpha,\beta)$ satisfying
$\phi^{\cI}(\alpha,\beta;\mu)> \hat\kappa(\mu)$.

\medskip\noindent
{\bf Step 1.} Recall \eqref{Rpa} and pick a $\gamma\in (0,1)$ for which $M_\gamma\in\cR(p)$.
By picking $a_{AA}=a_{AB}=a^*=\frac52$ and $(\rho_{kl})=M_\gamma$ in (\ref{fevar}), and
noting that $\psi_{AA}(a^*)=f(\alpha,\beta_c(\alpha);p)=\kappa(a^*,1)=\varpi$, we get
\be{fest14extext}
T_\alpha(\delta) \geq
\gamma\big[\psi_{AB}(a^*,\delta)-\kappa(a^*,1)\big].
\ee
Since $\mu\mapsto\phi^\cI(\mu,0)$ is continuous and $\phi^\cI(1,0)=0$, Proposition
\ref{p:phtrinfchar} allows us to choose a $\mu_\alpha\geq 1$ that is a solution of the
equation $\phi^\cI(\mu,0)=\varpi+(1/\mu)\varsigma$ (recall (\ref{kldefs})). Pick $C\in
(0,1)$ and, in the variational formula for $\psi_{AB}(a^*,\delta)$ in Lemma \ref{l:linkinf},
pick $c=C\delta$ and $c/b=\mu_\alpha$, to obtain the lower bound
\be{seq10}
T_{\alpha}(\delta) \geq \frac{\gamma}{a^*}
\Big[C \delta \phi^\cI(\mu_\alpha,\delta)+(a^*-C\delta)
\kappa\big(a^*-C\delta,1-C\delta/\mu_\alpha\big)-a^*\kappa(a^*,1)\Big].
\ee
Use Lemma \ref{l:ka}(iv-vi) to Taylor expand
\be{fest15}
\begin{aligned}
\kappa\big(a^*-C\delta,1-C\delta/\mu_\alpha\big)
&= \kappa(a^*,1)-(\varsigma/a^*)\,
C\delta/\mu_\alpha + B_\alpha C^2\delta^2\\
&\qquad + \zeta\big(C\delta,C\delta/\mu\big)\,
C^2\delta^2\left(1+1/\mu^2_\alpha\right),
\quad \delta\da 0,
\end{aligned}
\ee
for some $B_{\alpha}\in\R$ and $\zeta$ a function on $\R^2$ tending to zero at $(0,0)$. Since
$\beta_c(\alpha)\leq\beta^*$ for $\alpha\geq\alpha^*$, Lemma \ref{mulim} tells us that
$\phi^\cI(\alpha,\beta_{c}(\alpha);\mu)$ tends to $0$ as $\mu\to\infty$ uniformly in
$\alpha\geq \alpha^*$. Consequently, $\mu_{\alpha}$ is bounded uniformly in $\alpha\geq
\alpha^*$, and therefore so is $B_\alpha$. By inserting \eqref{fest15} into \eqref{seq10},
we obtain that there exist $M\in\R$ and $\delta_0>0$ such that
\be{eq11}
T_{\alpha}(\delta) \geq \frac{\gamma}{a^*}
\left[C\delta\left\{\phi^\cI(\mu_\alpha,\delta)-\phi^\cI(\mu_\alpha,0)\right\}
+ Ma^*C^2 \delta^2\right]
\qquad \forall\,\alpha>\alpha^*,\,\delta\in I_0(\alpha).
\ee
Since, by Lemma \ref{l:kamu}(iv) and Proposition \ref{p:phtrinfchar}, $\phi^\cI(\mu_\alpha,0)
>\hat\kappa(\mu_\alpha)$, Lemma \ref{Psietphi1*} gives that $(\alpha,\beta_c(\alpha))$ lies
in the localized phase of $(\alpha',\beta')\rightarrow \phi^\cI(\mu_\alpha,\alpha',\beta')$.
Therefore
\be{phider}
\phi^\cI(\mu_\alpha,\delta)-\phi^\cI(\mu_\alpha,0)
\geq C'_{\alpha}\delta \quad \mbox{ with }
\quad C'_{\alpha}=\frac{\partial \phi^{\cI}}{\partial \beta^{+}}
(\alpha,\beta_{c}(\alpha);\mu_{\alpha})\in (0,1].
\ee
Hence (\ref{eq11}) becomes
\be{fest16}
T_{\alpha}(\delta)\geq \frac{\gamma}{a^*}
(CC'_{\alpha}+Ma^*C^2)\,\delta^2
\qquad \forall \alpha> \alpha^*,\,\delta\in I_0(\alpha).
\ee
Now pick $C$ small enough so that $Ma^*C>-\frac12 C'_{\alpha}$, to get the inequality
in \eqref{lb} with $C_1=\frac{\gamma}{2a^*}CC'_{\alpha}$.

\medskip\noindent
{\bf Step 2.} To complete the proof of Proposition \ref{p:lb} it suffices to show that $C'_\alpha$
can be bounded from below by a strictly positive constant. The latter is done as follows.
Suppose that there exists a sequence $(\alpha_n)_{n\in\N}$ in $(\alpha^*,\infty]$ such that
$\lim_{n\to\infty} C'_{\alpha_{n}}=0$. By considering a subsequence of $(\alpha_n)_{n\in\N}$,
we may assume that $\alpha_n$ and $\mu_{\alpha_n}$ converge, respectively, to $\alpha_\infty
\in[\alpha^*,\infty]$ and $\mu_\infty$. Moreover, as proved in Lemma \ref{Psietphi1},
\be{eq20}
\lim_{n\to \infty} \phi^{\cI}(\alpha_n,\beta,\mu_{\alpha_n})
=\phi^{\cI}(\alpha_\infty,\beta,\mu_\infty) \qquad \forall\,\beta>0,
\ee
and $\beta\mapsto\phi^{\cI}(\alpha_n,\beta;\mu_{\alpha_n})$ is convex
for every $n\in\N$. Consequently,
\be{eq21}
\frac{\partial\phi^{\cI}}{\partial \beta^{-}}
(\alpha_\infty,\beta_{c}(\alpha_\infty);\mu_\infty)
\leq \limsup_{n\to \infty} \frac{\partial \phi^{\cI}}{\partial
\beta^{+}}
(\alpha_n,\beta_{c}(\alpha_n);\mu_{\alpha_{n}})
= \limsup_{n\to \infty} C'_{\alpha_{n}} = 0
\ee
and
\be{eq22}
\phi^{\cI}(\alpha_\infty,\beta;\mu_\infty)
= \varpi +\frac{1}{\mu_\infty}\varsigma >\hat\kappa(\mu_\infty).
\ee
But \eqref{eq21} yields $\frac{\partial \phi^{\cI}}{\partial\beta^{-}}(\alpha_\infty,
\beta_{c}(\alpha_\infty);\mu_\infty)\leq0$, which contradicts the statement in Lemma
\ref{Psietphi1}, because of \eqref{eq22}.
\epr

\subsection{Proof of Proposition \ref{p:ub}}
\label{S4.3}

{\bf Step 1.} Since $\psi_{AB}\geq \psi_{kl}$ for all $kl\in\{A,B\}^2$, we can write
\be{fest11}
f(\alpha,\beta_c(\alpha)+\delta;p)-f(\alpha,\beta_c(\alpha);p)
\leq \psi_{AB}(a_{\alpha}(\delta),\delta)-\varpi.
\ee
Because of Lemma \ref{III} we also have
\be{fest111}
f(\alpha,\beta_c(\alpha)+\delta;p)-f(\alpha,\beta_c(\alpha);p)
\leq
\psi_{AB}(a_{\alpha}(\delta),\delta)-\psi_{AB}(a_{\alpha}(\delta),0).
\ee
Since
\be{fest12}
\begin{aligned}
&\psi_{AB}(a_{\alpha}(\delta),\delta)-\psi_{AB}(a_{\alpha}(\delta),0)\\
&\qquad \leq \frac{1}{a_\alpha(\delta)}\,
\Big\{c_\alpha(\delta)
\Big[\phi^\cI\big(\mu_{\alpha}(\delta),\alpha,\beta_c(\alpha)+\delta\big)
-\phi^\cI\big(\mu_{\alpha}(\delta),\alpha,\beta_c(\alpha)\big)\Big]\Big\}
\end{aligned}
\ee
and, for $\delta$ fixed, $\beta\mapsto\phi^\cI(\alpha,\beta;\mu_{\alpha}(\delta))$ is
convex with slope bounded by 1, we obtain
\be{fest13}
\begin{aligned}
\psi_{AB}(a_{\alpha}(\delta),\delta)-\psi_{AB}(a_{\alpha}(\delta),0)
&\leq \frac{1}{a_0}\,\left[\left(\frac{\partial}{\partial \beta}\phi^\cI\right)
\big(\alpha,\beta_c(\alpha)+\delta;\mu_{\alpha}(\delta)\big)\right]\,c_\alpha(\delta)\,\delta\\
&\leq \frac{1}{a_0}\,c_\alpha(\delta)\,\delta.
\end{aligned}
\ee

\medskip\noindent
{\bf Step 2.}
The proof of \eqref{ub} is now completed by the following.

\bl{VI}
For every $\alpha>\alpha^*$ there exist $C_{\alpha}<\infty$ and $\delta_0>0$ such that
$c_\alpha(\delta)\leq C_{\alpha}\delta$ for all $\delta\in I_0(\alpha)$.
\el

\bpr
Recall the statement of Lemma \ref{II}, i.e., for every $\delta\in I_0(\alpha)$
there exists a $\mu_{\alpha}(\delta)\in [1,\mu_{0}]$ such that
\be{eq8ext}
\psi_{AB}(a_{\alpha}(\delta),\delta)
=\sup_{c\leq \min\{a_{\alpha}(\delta)-1,\mu_{\alpha}(\delta)
(a_{\alpha}(\delta)-2)/(\mu_{\alpha}(\delta)-1)\}}
H(c,a_{\alpha}(\delta),\mu_{\alpha}(\delta),\delta)
\ee
with
\be{eq7ext}
\begin{aligned}
H(c,a_{\alpha}(\delta),\mu_{\alpha}(\delta),\delta)
= \frac{1}{a_\alpha(\delta)}\,\Big[c\phi^\cI(\mu_{\alpha}(\delta),\delta)
+(a_{\alpha}(\delta)-c)\kappa\big(a_{\alpha}(\delta)-c,
1-c/\mu_{\alpha}(\delta)\big)\Big].
\end{aligned}
\ee
We proved in Lemma \ref{acdellim} that the supremum is attained in a point $c_\alpha(\delta)>0$
that tends to zero as $\delta\da 0$. Since $H$ is differentiable w.r.t.\ its first variable,
we have
\be{eq9ext}
\frac{\partial H}{\partial 1}\big(c_\alpha(\delta),a_{\alpha}(\delta),
\mu_{\alpha}(\delta),\delta\big) = 0.
\ee
Moreover, since $H$ is also differentiable w.r.t.\ its second variable, and since the maximum
of $\psi_{AB}(a,\delta)$ over $a\in [2,\infty)$ is attained in $a_{\alpha}(\delta)$, we have
\be{eq10}
\frac{\partial H}{\partial 2}\big(c_\alpha(\delta),a_{\alpha}(\delta),
\mu_{\alpha}(\delta),\delta\big) = 0.
\ee
In what follows, we consider three functions $(\delta\mapsto\xi_{i,\alpha}(\delta))_{i=1,2,3}$
that tend to zero as $\delta\da 0$. Since $a_{\alpha}(\delta)$ tends to $a^*$ by Lemma
\ref{acdellim}(i), we use the notation $a_{\alpha}(\delta)=a^*+\hat{a}_\alpha(\delta)$.
For simplicity, when we do not indicate the point at which a derivative is taken, this
point is $(a^*,1)$ by default.

Computing the derivative in \eqref{eq9ext} from (\ref{eq7ext}), we obtain a relation between
$c_\alpha(\delta)$ and $a_{\alpha}(\delta)$. We may simplify this relation by using a first
order Taylor expansion of the quantities
\be{quanTay}
\kappa\big(a_{\alpha}(\delta),1-c_\alpha(\delta)/\mu_{\alpha}(\delta)\big),\quad
\frac{\partial \kappa}{\partial 2}\big(a_{\alpha}(\delta),1-c_\alpha
(\delta)/\mu_{\alpha}(\delta)\big),\quad
\frac{\partial \kappa}{\partial 2}\big(a_{\alpha}(\delta),1-c_\alpha
(\delta)/\mu_{\alpha}(\delta)\big),
\ee
in the neighbourhood of $(a^{*},1)$. This gives, after some straightforward but tedious
computations,
\be{eq13}
\begin{aligned}
&\big[\phi^\cI(\mu_{\alpha}(\delta),\delta)-\kappa(a^{*},1)
-\textstyle{\frac{5}{2\mu_{\alpha}(\delta)}}\frac{\partial
K}{\partial2}\big]\\
&\qquad + c_\alpha(\delta)\,
A_{\alpha,\delta}+\hat{a}_\alpha(\delta)\,
B_{\alpha,\delta}+\xi_{1,\alpha}(\delta)\,(|c_\alpha(\delta)|+|\hat{a}_\alpha(\delta)|)=0
\end{aligned}
\ee
with
\be{eq13ext}
\begin{aligned}
A_{\alpha,\delta} &= \textstyle{\frac{1}{\mu_{\alpha}(\delta)}
\big[2\frac{\partial \kappa}{\partial 2}
+ 5 \frac{\partial^{2} \kappa}{\partial 1 \partial 2}
+\frac{5}{2 \mu_{\alpha}(\delta)} \frac{\partial^{2} \kappa}{\partial
2^2}
+\frac{5 \mu_{\alpha}(\delta)}{2}\frac{\partial^{2} \kappa}{\partial
2^2}\big]},\\
B_{\alpha,\delta} &=\textstyle{-\frac{1}{\mu_{\alpha}(\delta)}
\big[\frac{\partial \kappa}{\partial 2}
+\frac52 \frac{\partial^{2} \kappa}{\partial 1 \partial 2}
+\frac{5 \mu_{\alpha}(\delta)}{2} \frac{\partial^{2} \kappa}{\partial
1^2}\big]}.
\end{aligned}
\ee
The same type of computation applied to \eqref{eq10} gives
\be{eq12}
\hat{a}_\alpha(\delta)+\xi_{2,\alpha}(\delta) \hat{a}_\alpha(\delta)
=c_\alpha(\delta)C_{\alpha,\delta} + \xi_{3,\alpha}(\delta)
c_\alpha(\delta)
\ee
with
\be{Jdefext}
C_{\alpha,\delta}=\textstyle{-(\frac25)^2\,\, \frac{\kappa(a^*,1)
-\phi^\cI(\mu_{\alpha}(\delta),\delta)}{\frac{\partial^2\kappa}
{\partial 1^2}}+1+\frac{\frac{\partial\kappa^2}{\partial
1 \partial 2}}{\mu_{\alpha}(\delta) \frac{\partial^2\kappa}{\partial
1^2}}}.
\ee

Recalling that $c_\alpha(\delta)$ and $\hat{a}_\alpha(\delta)$ tend to zero as $\delta
\da 0$ (by Lemma \ref{acdellim}), we obtain from \eqref{eq12} that $\hat{a}_\alpha(\delta)
\in [(C_{\alpha,\delta} -\gep) c_\alpha(\delta),(C_{\alpha,\delta}+\gep)c_\alpha(\delta)]$
for all $\gep>0$ and $\delta$ small enough. From this last inclusion and \eqref{eq13}, we
get that there exists a $\delta_{1}>0$ such that, for all $\gep>0$ and $\delta\leq \delta_{1}$,
\be{eq14}
\big[\phi^\cI(\mu_{\alpha}(\delta),\delta)-\kappa(a^{*},1)
-\textstyle{\frac{5}{2\mu_{\alpha}(\delta)}}\frac{\partial
K}{\partial2}\big]
+c_\alpha(\delta)\, \big(A_{\alpha,\delta}+ B_{\alpha,\delta}
C_{\alpha,\delta}+\gep\big)\geq 0.
\ee
Abbreviate
\be{Deldeldef}
\Delta(\delta) = \phi^\cI(\mu_{\alpha}(\delta),\delta)
-\kappa(a^{*},1)-\textstyle{\frac{5}{2\mu_{\alpha}(\delta)}}
\frac{\partial K}{\partial2}.
\ee
Since $(\alpha,\beta_{c}(\alpha))$ lies in the delocalized region,
Proposition \ref{p:phtrinfchar} tells us that $\phi^\cI(\mu_{\alpha}(\delta),0)\leq
\kappa(a^*,1)+\textstyle{\frac{5}{2\mu_{\alpha} (\delta)}}\frac{\partial K}{\partial2}$.
Therefore we can write
\be{eq15}
\Delta(\delta)\leq
\phi^\cI(\mu_{\alpha}(\delta),\delta)-\phi^\cI(\mu_{\alpha}(\delta),0).
\ee
A simple computation gives that $\phi^\cI(\mu,\delta)-\phi^\cI(\mu,0)\leq \delta$ for all $\mu\geq 1$ (recall \eqref{frenabbr}). Hence $\Delta(\delta)\leq \delta$.

From \eqref{eq13ext} and \eqref{Jdefext}, we have
\be{eq14ext}
A_{\alpha,\delta}+
B_{\alpha,\delta}C_{\alpha,\delta}=\textstyle{\frac{A}{\mu_{\alpha}(\delta)^{2}}
+\Delta(\delta)\,\Big[\frac{B}{\mu_{\alpha}(\delta)}-\frac{2}{5}\Big]}
\ee
with
\be{eq15ext}
A=\textstyle{\frac{1}{\frac{\partial^{2}\kappa}{\partial 1^2}}
\Big[\frac52 \frac{\partial^{2}\kappa}
{\partial 2^2}-\frac25 \big(\frac{\partial \kappa}
{\partial 2}\big)^{2}-2\frac{\partial\kappa}
{\partial 2} \frac{\partial^{2}\kappa}
{\partial 1\partial 2}-\frac{5}{2}
\big(\frac{\partial^{2}\kappa}{\partial1\partial2}\big)^{2}\Big]}
\quad \text{and} \quad
B=\textstyle{\frac{1}{\frac{\partial^{2} \kappa}{\partial 1^2}}
\Big[-\big(\frac25\big)^{2}\frac{\partial \kappa}{\partial2}-\frac25
\frac{\partial^{2} \kappa}{\partial 1\partial 2}\Big]}.
\ee
By inserting the values of the derivatives given in Lemma \ref{l:ka}(v--vi),
we find that $A<0$. Thus, recalling that $1\leq\mu_{\alpha}(\delta)\leq \mu_{0}$
for all $\delta\in I_{0}(\alpha)$ (by Lemma \ref{II}), we can rewrite \eqref{eq14}
as
\be{eq16}
A_{\alpha,\delta}+
B_{\alpha,\delta}C_{\alpha,\delta}\leq\textstyle{\frac{A}{\mu_{0}^{2}}
+\Delta(\delta)\,\big[|B|+\frac{2}{5}\big]}.
\ee
Since $\Delta(\delta)\leq \delta$, we can now assert that there exists a $\delta_{2}>0$
such that $0<\delta\leq \delta_{2}$ implies $A_{\alpha,\delta}+B_{\alpha,\delta}
C_{\alpha,\delta}\leq 3A/2 \mu_0^2$. Therefore \eqref{eq14} becomes $\delta
+c_\alpha(\delta)\,3A/2 \mu_{0}^{2}\geq 0$ and, consequently, for $\delta_0
=\min\{\delta_1,\delta_2\}$ there exists a $C_\alpha>0$ such that for
all $\delta\in I_{0}(\alpha)$,
\be{eq17}
c_\alpha(\delta)\leq C_\alpha \delta.
\ee
This completes the proof of Lemma \ref{VI}.
\epr


\section{Proof of Theorem \ref{locinfdiff}}
\label{S5}

In Section \ref{S5.1} we study a variation of the single linear interface model in
which the variable $\mu$ is replaced by a dual variable $\lambda$, which enters into
the Hamiltonian rather than in the set of paths. We show that the free energy for
this dual model is smooth. In Section \ref{Sustrconv} we show that the dual free
energy has a non-zero curvature. In Sections \ref{S5.2} and \ref{S5.3} we use this
to prove that $\phi^\cI$ and $\psi_{AB}$ are smooth on their localized phases and
have a non-zero curvature too. The latter in turn are used in Section \ref{S5.4} to
prove the smoothness of $f$ on $\cL$. Key ingredients in the proofs are the implicit
function theorem, the exponential tightness of the excursions in the localized phases,
and the uniqueness of the maximisers in the variational formulas for $\phi^\cI$,
$\psi_{AB}$ and $f$.

\subsection{Fenchel-Legendre transform of $\phi^\cI$}
\label{S5.1}

We begin by defining the dual of the single interface model. Let $\cW_L$ be the set
of $L$-step directed self-avoiding paths that start at $(0,0)$ and end at $(x,0)$ for
some $x\in\{1,\dots,L\}$. For $\pi\in\cW_L$, let $h(\pi)$ be the number of horizontal
steps in $\pi$. For $\lambda \geq 0$, define (recall (\ref{newham}))
\begin{equation}
\label{newmod}
\begin{aligned}
U_L^{\omega,\cI}(\alpha,\beta;\lambda)
&= \sum_{\pi\in \cW_L} e^{-\lambda h(\pi) - H_L^{\omega,\cI}(\pi)}\\
u^\cI(\alpha,\beta;\lambda)
&= \lim_{L\to \infty} \frac{1}{L}\log U_L^{\omega,\cI}(\alpha,\beta;\lambda)
\quad \omega-a.s.
\end{aligned}
\end{equation}
and
\begin{equation}\label{equa4}
\tilde{\kappa}(\lambda)=\lim_{L\to \infty} \frac{1}{L}\log \sum_{\pi\in \cW_L}
e^{-\lambda h(\pi)}.
\end{equation}
The convergence $\omega$-a.s.\ and in mean and the constantness $\omega$-a.s.\ of
$u^\cI(\alpha,\beta;\lambda)$ follow from the subadditive ergodic theorem (Kingman
\cite{King}). Set
\begin{equation}
\cL_u = \big\{(\alpha,\beta,\lambda) = \CONE \times [0,\infty)\colon\,
u^\cI(\alpha,\beta;\lambda)>\tilde{\kappa}(\lambda)\big\},
\end{equation}
i.e., the region where the dual of the single linear interface model is localized.

\begin{proposition}
\label{uidiff}
The function $(\alpha,\beta,\lambda)\mapsto u^\cI(\alpha,\beta;\lambda)$ is infinitely
differentiable on $\cL_u$.
\end{proposition}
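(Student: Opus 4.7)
The plan is to establish $u^\cI\in C^\infty(\cL_u)$ by combining exponential tightness of excursions under the Gibbs measure with an implicit function theorem applied to a single-excursion generating function, following the template of Giacomin and Toninelli \cite{GT06b} for the single flat interface, adapted to the dual setting where horizontal steps are weighted by $e^{-\lambda}$ rather than the path length being fixed.

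The first step is to prove exponential tightness of excursions on $\cL_u$: for every compact $K\subset\cL_u$ there exist $c,C>0$ such that, uniformly in $L\in\N$ and in $(\alpha,\beta,\lambda)\in K$, the Gibbs measure with density proportional to $e^{-\lambda h(\pi)-H_L^{\omega,\cI}(\pi)}$ assigns probability at most $Ce^{-cM}$ to paths carrying an excursion of length $\geq M$. This is obtained by adapting the proof of Proposition \ref{extight}: replace the reference rate $\varpi$ by $\tilde\kappa(\lambda)$, exploit the strict positivity of $u^\cI-\tilde\kappa(\lambda)$ on $K$, and use the concentration inequality of Lemma \ref{conc}, whose derivation only relies on boundedness of the per-site disorder and therefore transfers to the dual model without change.

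The second step sets up a renewal representation. Since $\omega$ on disjoint excursions is independent and the weight $e^{-\lambda h(\pi)-H_L^{\omega,\cI}(\pi)}$ factorises along excursions, $U_L^{\omega,\cI}$ can be written as a sum over excursion sequences with independent block weights. Introducing an auxiliary parameter $\lambda'\geq 0$ and the single-excursion generating function
\[
\Phi(\alpha,\beta,\lambda,\lambda') = \E_\omega\!\left[\sum_{e} e^{-\lambda'|e|-\lambda h(e)-H^{\omega,\cI}(e)}\right],
\]
where the sum runs over all single excursions $e$ starting on the interface, I would characterise $u^\cI(\alpha,\beta;\lambda)$ as the unique $\lambda'$ solving $\Phi=1$. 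Exponential tightness forces the series defining $\Phi$ to converge absolutely together with all its partial derivatives, by termwise dominated convergence, so $\Phi\in C^\infty$ in $(\alpha,\beta,\lambda,\lambda')$ on a neighbourhood of the solution surface. Since $\partial_{\lambda'}\Phi<0$ at the root (being minus the expected excursion length, which is finite on $\cL_u$), the implicit function theorem yields $u^\cI\in C^\infty(\cL_u)$.

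The main obstacle I anticipate is the identification of the quenched rate $u^\cI$ as the root of the annealed-type equation $\Phi=1$: for copolymer models the quenched and annealed free energies generally differ, so this step is genuine work rather than a formality. To bridge the gap, I would first pass to a truncated model in which excursions are constrained to length $\leq M$. The truncated partition function admits a finite-range transfer-matrix representation whose top Lyapunov exponent is smooth in $(\alpha,\beta,\lambda)$ by a direct computation (or a Furstenberg-type argument). The exponential tightness of Step 1 then controls the truncation error and all its derivatives uniformly on compacts of $\cL_u$, so smoothness survives the limit $M\to\infty$ and produces $u^\cI\in C^\infty(\cL_u)$.
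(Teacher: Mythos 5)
Your plan diverges sharply from the paper's, and the divergence lands you in a genuine gap that your own last paragraph acknowledges but does not actually repair. The paper proves Proposition \ref{uidiff} by Arzela--Ascoli applied to the finite-volume averaged free energies: it reduces smoothness to a uniform bound on all derivatives of $L^{-1}\E\log U_L^{\omega,\cI}$, which in turn reduces to the correlation-decay estimate \eqref{correq}, and then proves that estimate by a coupling argument. Two independent copies of the path are shown, using both the exponential tightness (your Step 1, matching Lemma \ref{lemM}(i)) and its converse Lemma \ref{lemM}(ii), to make many short excursions on $\{a,\dots,b\}$; the event $B_{a,b}$ that they never coincide at a common step-and-height is then exponentially unlikely because each short excursion of one path has a ``prohibited region'' forced by the other. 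No renewal equation, no generating function, no implicit function theorem, no Lyapunov exponent appears.

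Your renewal route has two problems that the paper's route avoids. First, as you yourself note, the equation $\Phi=1$ identifies the \emph{annealed} excursion free energy, not the quenched $u^\cI$: in the localized phase of a disordered copolymer these typically differ, and there is no a priori reason the quenched free energy is even a root of any smooth scalar equation in a finite number of auxiliary variables. Second, the truncation-and-transfer-matrix bridge you propose does not close the gap: (a) the transfer matrices are $\omega$-dependent, so ``the top Lyapunov exponent'' is a quenched Lyapunov exponent of a random matrix product, whose smoothness in parameters is a substantial theorem (Le Page/Ruelle-type results under strong irreducibility and contraction hypotheses), not a ``direct computation''; and (b) even granting smoothness of the truncated free energy, passing to $M\to\infty$ requires \emph{uniform convergence of all derivatives}, which excursion tightness of the path measure alone does not give --- it controls the partition-sum difference, not the derivatives of the log-partition sum. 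Establishing uniform derivative bounds is precisely the content of the Arzela--Ascoli/correlation-decay step, so you would end up having to prove the paper's Step 1 estimate \eqref{correq} anyway. I would recommend abandoning the renewal formulation and instead proving \eqref{pivest} directly: use Lemma \ref{lemM} to force both copies of the path into a regime of many short excursions on $\{a,\dots,b\}$, and then exploit that two paths which both make short excursions in the same region but never agree at a common height must each pay a fixed multiplicative entropy cost per excursion, leading to the exponential bound.
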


\bpr
The proof is similar to that of the infinite differentiability of the free energy
for the single interface model, proved in Giacomin and Toninelli \cite{GT06b}.
Therefore, we only sketch the main steps in the proof and refer to \cite{GT06b}
for further details.

\medskip\noindent
{\bf Step 1.} The claim follows from the Arzela-Ascoli theorem as soon as we prove
that for all $(\alpha_0,\beta_0,\lambda_0)\in\cL_u$ there exists $\cV\subset \cL_u$ a neighborhood of
$(\alpha_0,\beta_0,\lambda_0)$ such that for all $k\in\mathbb{N}$,
the $k$-th derivative of $L^{-1}\E(\log U_L^{\omega,\cI}(\alpha,
\beta;\lambda))$ w.r.t.\ any of the parameters $\alpha,\beta,\lambda$ is bounded
uniformly in $L$ and $(\alpha,\beta,\lambda)\in\cV$, where $\E$ denotes expectation w.r.t.\ $\omega$.

For $a,b \in\N$ with $a<b$, let $\mathcal{H}_{a,b}$ be the set of bounded functions
that are measurable w.r.t.\ the $\sigma$-algebra $\sigma(\pi_j\colon\,j\in
\{a,\dots,b\})$. As explained in \cite{GT06b}, the conditions of the Arzela-Ascoli
theorem are satisfied once we show that for all $(\alpha_0,\beta_0,\lambda_0)\in \cL_u$
there exist $C_1,C_2>0$ and $\cV\subset \cL_u$ such that, for all $a_1,b_1,a_2,b_2\in\N$ with
$a_1<b_1<a_2<b_2\leq L$ and
$(f_1,f_2)\in \cH_{a_1,b_1} \times \cH_{a_2,b_2}$ and $(\alpha,\beta,\lambda)\in \cV$,
the following inequality holds:
\begin{equation}
\label{correq}
\E\Big(E_L^{\omega,\cI}(f_1 f_2)-E_L^{\omega,\cI}(f_1)E_L^{\omega,\cI}(f_2)\Big)\leq C_1\,
\|f_1\|_\infty\, \|f_2\|_\infty\, e^{-C_2 (a_2-b_1)}.
\end{equation}
Here, $E_L^{\omega,\cI}$ is expectation w.r.t.\ the law of the $L$-step copolymer at
fixed $\omega$ given by (recall (\ref{newmod}))
\be{newmodlaw}
P_L^{\omega,\cI}(\pi) = \frac{1}{U_L^{\omega,\cI}}\,
e^{-\lambda h(\pi) - H_L^{\omega,\cI}(\pi)}.
\ee

\begin{figure}
\vspace{1cm}
\begin{center}
\setlength{\unitlength}{0.35cm}
\begin{picture}(7,5)(10,-8)
\put(1,-1){\circle*{.35}}
\put(3,-4){\circle*{.35}}
\put(11,1){\circle*{.35}}
\put(16,-1){\circle*{.35}}
\put(0.4,-2.2){{\small $a$}}
\put(2.2,-4.8){{\small $a$}}
\put(11.65,1){{\small $b$}}
\put(16.5,-0.5){{\small $b$}}
{\qbezier(1,-1)(1.5,-1)(2,-1)}
{\qbezier(2,-1)(2,-1.5)(2,-2)}
{\qbezier(2,-2)(3,-2)(3,-2)}
{\qbezier(3,-2)(3,-2.5)(3,-3)}
{\qbezier(3,-3)(4,-3)(5,-3)}
{\qbezier(5,-3)(5,-2)(5,-2)}
{\qbezier(5,-2)(6,-2)(6,-2)}
{\qbezier(6,-2)(6,-4)(6,-4)}
{\qbezier(6,-4)(7,-4)(7,-4)}
{\qbezier(7,-4)(7,-6)(7,-6)}
{\qbezier(7,-6)(8,-6)(8,-6)}
{\qbezier(8,-6)(8,-1)(8,-1)}
{\qbezier(8,-1)(9,-1)(9,-1)}
{\qbezier(9,-1)(10,-1)(10,-1)}
{\qbezier(10,-1)(10,1)(10,1)}
{\qbezier(10,1)(11,1)(11,1)}
{\qbezier[10](3,-5)(3,-4.5)(3,-4)}
{\qbezier[20](3,-5)(4,-5)(5,-5)}
{\qbezier[20](5,-5)(5,-6)(5,-7)}
{\qbezier[20](5,-7)(6,-7)(7,-7)}
{\qbezier[10](7,-7)(7,-6.5)(7,-6)}
{\qbezier[20](7,-6)(8,-6)(9,-6)}
{\qbezier[20](7,-6)(8,-6)(9,-6)}
{\qbezier[20](9,-4)(9,-5)(9,-6)}
{\qbezier[10](9,-4)(9.5,-4)(10,-4)}
{\qbezier[10](10,-4)(10,-3.5)(10,-3)}
{\qbezier[20](10,-3)(11,-3)(12,-3)}
{\qbezier[30](12,-3)(12,-1.5)(12,0)}
{\qbezier[20](12,0)(13,0)(14,0)}
{\qbezier[10](14,0)(14,-0.5)(14,-1)}
{\qbezier[20](14,-1)(15,-1)(16,-1)}
{\qbezier(7.8,-3)(8.1,-3)(8.2,-3)}
{\qbezier(7.8,-2)(8.1,-2)(8.2,-2)}
\put(6.8,-2.8){ \{}
{\qbezier(7.2,-2.3)(6.3,-0.5)(5.8,0.5)}
{\qbezier(5.7,0.1)(5.8,0.5)(5.8,0.5)}
{\qbezier(6.15,0.4)(5.8,0.5)(5.8,0.5)}
\put(11.9,-2.8){ \}}
{\qbezier(11.8,-2)(12,-2)(12.2,-2)}
{\qbezier(20,-5)(21,-5)(22,-5)}
{\qbezier[10](20,-7)(21,-7)(22,-7)}
\put(22,-5){\small\  :\ $\pi_1$}
\put(22,-7){\small\ :\ $\pi_2$}
\put(2.7,0.8){\small $j$-step of $\pi_1$}
{\qbezier(12.8,-2.55)(14,-2.55)(16,-2.55)}
{\qbezier(15.7,-2.7)(16,-2.55)(16,-2.55)}
{\qbezier(15.7,-2.4)(16,-2.55)(16,-2.55)}
\put(16.4,-2.8){\small $j$-th step of $\pi_2$}
\end{picture}
\end{center}
\caption{A pair of paths $(\pi_1,\pi_2)$ whose $j$-th steps are the same
and occur at the same height.}
\label{figbab1}
\end{figure}
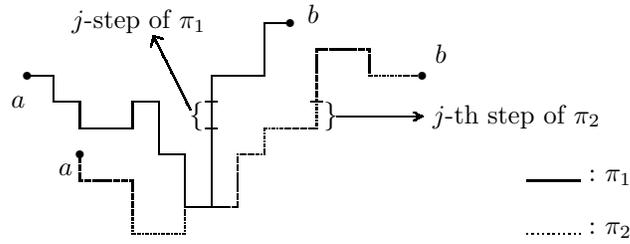

Next, the correlation inequality in (\ref{correq}) will follow once we show that there exist
$C_1,C_2>0$ and $\cV\subset \cL_u$ (depending on $\alpha_0,\beta_0,\lambda_0$) such that, for all $a,b,L\in\N$ with
$a \leq b\leq L$, we have
\begin{equation}
\label{pivest}
\E\big([P_L^{\omega,\cI}]^{\otimes 2}(B_{a,b})\big) \leq C_1 e^{-C_2 (b-a)},
\end{equation}
where $[P_L^{\omega,\cI}]^{\otimes 2}$ is the joint law of two independent copies of
the $L$-step copolymer at fixed $\omega$, and
\begin{equation}
\begin{aligned}
B_{a,b}
&= \{(\pi^1,\pi^2)\colon\, \nexists\, j\in \{a,\dots,b\}
\text{ such that the $j$-th steps}\\
&\qquad \text{of $\pi_1$ and $\pi_2$ are the same and
occur at the same height}\}.
\end{aligned}
\end{equation}
Indeed, on $[B_{a,b}]^c$ the two paths can be coupled as soon as they make the common
step. An example of a pair of paths $(\pi_1,\pi_2)$ not in $B_{a,b}$ is displayed in Figure
\ref{figbab1}.

\medskip\noindent
{\bf Step 2.} For $i=1,2$ and $M\in\mathbb{N}$, let $l_{i,M}$ be the number of excursions
of $\pi_i$ (either strictly positive or non-positive) that are included in $\{a,\dots,b\}$
and are smaller than or equal to $M$. Let
\begin{equation}
\cE_M(\pi_i)=\{(b^i_1,e^i_1),\dots,(b^i_{l_{i,M}},
e^i_{l_{i,M}})\},
\end{equation}
where $(b^i_j,e^i_j)$ denote the end-steps of the $j$-th excursion. Put $\tau_j^i
=e^i_j-b^i_j+1$, and for $\gamma\in(0,1)$ let
\begin{equation}
\label{AiMdef}
\cA_{i,\gamma,M} = \left\{\pi_i\colon\,\sum_{j=1}^{l_{i,M}} \tau^i_j
\geq \gamma(b-a)\right\}.
\end{equation}

\begin{lemma}
\label{lemM}
(i) For all $\gamma_0\in (0,1)$ and $(\alpha_0,\beta_0,\lambda_0)\in \cL_u$ there
exist $M\in\N$, an open neighborhood $\cV$ of\, $(\alpha_0,\beta_0,\lambda_0)$ in
$\cL_u$ and $C_1,C_2>0$ such that, for $L \geq b \geq a$ and $(\alpha,\beta,\lambda)\in \cV$,
\be{tight1}
\nonumber \E\left(\P_L^{\omega,\cI}(\cA_{i,\gamma_0,M})\right) \geq 1-C_1 e^{-C_2(b-a)},
\qquad i=1,2.
\ee
(ii) For all $T_0\in\mathbb{N}$
and $(\alpha_0,\beta_0,\lambda_0)\in \cL_u$ there exist $\gamma\in(0,1)$, an open neighborhood $\cV$
of $(\alpha_0,\beta_0,\lambda_0)$ in $\cL_u$ and $C_1,C_2>0$ such that,
for all $L \geq b \geq a$ and $(\alpha,\beta,\lambda)\in \cV$,
\be{tight2}
\nonumber\E\left(\P_L^{\omega,\cI}(\cA_{i,\gamma,T_0})\right) \leq C_1 e^{-C_2 (b-a)},
\qquad i=1,2.
\ee
\end{lemma}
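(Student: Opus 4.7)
The plan is to adapt the exponential-tightness argument of Proposition~\ref{extight} to the dual single-interface model~(\ref{newmod}). Both parts share a common structure: given the constraint event, I would decompose each path $\pi$ in the event according to the positions and lengths of its targeted excursions, bound the Boltzmann weight of each excursion using an $\omega$-concentration inequality (dual-model analogue of Lemma~\ref{conc}, available by the same exponential-martingale argument as in \cite{GT06b}) together with the crude deterministic upper bound of the type $\phi^{\omega,\cI}_{\mu L}\leq R_\alpha$ from Lemma~\ref{l:kamu}(iii), count the configurations by Stirling (as in (\ref{stir})), and divide by a matching lower bound $Z_L^{\omega,\cI}\geq\exp[L(u^\cI(\alpha,\beta;\lambda)-\varepsilon)]$. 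The strict localization $u^\cI(\alpha_0,\beta_0;\lambda_0)>\tilde\kappa(\lambda_0)$ supplies a positive gap that, by continuity of $(\alpha,\beta,\lambda)\mapsto u^\cI-\tilde\kappa$ on $\cL_u$, persists on a suitably small neighborhood $\cV\subset\cL_u$ and produces the $e^{-C_2(b-a)}$ decay.

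For part~(i), fix $\gamma_0\in(0,1)$. A path $\pi\notin\cA_{i,\gamma_0,M}$ has excursions of length $>M$ that together cover at least $(1-\gamma_0)(b-a)$ indices of $[a,b]$. The Borel--Cantelli argument used in the proof of Lemma~\ref{mulim}(i), applied to the dual Hamiltonian, shows that on such long excursions the per-step free energy is at most $\tilde\kappa(\lambda)+\varepsilon_M$ with $\varepsilon_M\downarrow 0$ as $M\to\infty$, uniformly on $\cV$. The complementary steps contribute at most $\exp[R_\alpha\gamma_0(b-a)]$. After summing over the $\exp[o(b-a)]$-many choices of starting positions and lengths of the long excursions, and dividing by $Z_L^{\omega,\cI}$, one obtains a bound of order $\exp[-(1-\gamma_0)(u^\cI-\tilde\kappa-\varepsilon_M)(b-a)+o(b-a)]$, and the claim follows by choosing $M$ large enough so that $\varepsilon_M$ is less than half the uniform gap on $\cV$.

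For part~(ii), fix $T_0\in\N$ and consider paths whose excursions of length $\leq T_0$ cover at least $\gamma(b-a)$ indices of $[a,b]$. Such paths contain at least $N:=\lceil\gamma(b-a)/T_0\rceil$ short excursions and therefore return to the interface at least $N$ times inside $[a,b]$. Each short excursion contributes a Boltzmann factor bounded by $\exp[c_0(\alpha,\beta,\lambda)T_0]$ with $c_0$ continuous in the parameters, while the configurations of the $N$ return points inside $[a,b]$ generate a combinatorial entropy of size at most $\binom{b-a}{N}\leq\exp[(b-a)\,h(\gamma/T_0)]$ with $h(x)\downarrow 0$ as $x\downarrow 0$. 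Bounding the complementary part of $\pi$ by $\exp[(u^\cI+\varepsilon)(L-\gamma(b-a))]$ and dividing by $Z_L^{\omega,\cI}$ produces an exponent of the form $-\gamma\,(u^\cI-\tilde\kappa)(b-a)+h(\gamma/T_0)(b-a)+o(b-a)$, which is strictly negative once $\gamma$ is chosen small enough, depending on $T_0$ and on the uniform lower bound for $u^\cI-\tilde\kappa$ on $\cV$.

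The main obstacle is to secure uniformity on $\cV$ of both the localization gap $u^\cI-\tilde\kappa$ and the convergence $\varepsilon_M\downarrow 0$ in the dual analogue of Lemma~\ref{mulim}(i). Continuity of $(\alpha,\beta,\lambda)\mapsto u^\cI(\alpha,\beta;\lambda)$ on $\cL_u$ must be obtained without circularity---Proposition~\ref{uidiff} itself depends on the present lemma---but this follows directly from the variational representation of $u^\cI$ and its convexity in the parameters, which can be established independently. Uniform control on $\varepsilon_M$ is obtained by revisiting the proof of Lemma~\ref{mulim}(i), whose Borel--Cantelli step delivers bounds that are uniform on compact subsets of $\CONE\times[0,\infty)$. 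A minor additional technicality is handling the at most two excursions that cross the endpoints of $[a,b]$, which produces only a bounded multiplicative correction easily absorbed by the exponential decay.
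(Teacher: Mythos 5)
Your proof of part (i) is in the same spirit as the paper's: decompose on where the long excursions sit, use $\omega$-concentration for the free energy of the concatenated long part to show it is near $\tilde\kappa(\lambda)+o(1)$ per step, and invoke the uniform gap $u^\cI-\tilde\kappa$ on a small neighbourhood $\cV$ to win exponentially; the combinatorial cost $\exp[o(b-a)]$ for placing the excursion boundaries, and the continuity of $u^\cI$ on $\cL_u$ via convexity (avoiding circularity with Proposition~\ref{uidiff}), are treated correctly. So part (i) is fine.

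Part (ii), however, has a genuine gap and I do not think it can be repaired along the lines you sketch. You bound each short excursion by a deterministic factor $\exp[c_0(\alpha,\beta,\lambda)T_0]$ and claim that, after dividing by $Z_L^{\omega,\cI}$, the leading contribution is $-\gamma(u^\cI-\tilde\kappa)(b-a)+h(\gamma/T_0)(b-a)$. But there is no reason for a deterministic per-excursion bound to produce the constant $\tilde\kappa$: $c_0$ is simply the maximal Boltzmann weight per step and is in general $\geq u^\cI$, so the naive ratio contributes a \emph{positive}, not negative, exponent. Moreover, even granting the form you write, the sign is wrong in the limit you propose: $h(x)\sim -x\log x$ as $x\downarrow 0$, so $h(\gamma/T_0)$ dominates the linear term $\gamma(u^\cI-\tilde\kappa)$ as $\gamma\downarrow 0$, and the exponent becomes positive, not negative. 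Finally, the $o(b-a)$ you absorb actually hides an $O(\varepsilon L)$ error from bounding the complementary part by $\exp[(u^\cI+\varepsilon)(L-\gamma(b-a))]$ and $Z_L$ from below by $\exp[(u^\cI-\varepsilon)L]$, which is unbounded in $L$ and cannot be dismissed. Fundamentally, your argument for (ii) never uses the disorder $\omega$, but the statement is \emph{false} without it: for a non-disordered Hamiltonian there is nothing penalising paths that alternate frequently between half-planes, so there can be no uniform lower bound on the density of steps in long excursions. The paper's argument is qualitatively different: by the law of large numbers for $\omega$, a positive density of $R$-blocks in $\{a,\dots,b\}$ consist entirely of $A$-monomers, and on any such block that is covered only by excursions of length $\leq T_0$, the path is forced to put at least order $R/T_0$ steps below the interface, each at a type-$A$ site, which under the shifted Hamiltonian (\ref{newham}) costs a factor $e^{-\alpha}$. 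Relaxing this constraint therefore gains an exponential factor on a positive density of good blocks, and this disorder-driven energetic penalty is what produces the bound (\ref{tight2}). This mechanism is entirely absent from your proposal.
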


\bpr
(i) This part gives the exponential tightness of the excursions of the copolymer in the
localized phase. Compared to Proposition \ref{extight}, both the model and the statement
are different. However, the same tools can be used and for this reason we only give
a sketch of the proof. By the definition of $\cA_{i,\gamma_0,M}^c$, there are two cases.

\noindent[Case 1]
The sum of the lengths of the strictly positive excursions larger than $M$ in $\{a,\dots,b\}$
is $\geq \gamma \tfrac{b-a}{2}$.

\noindent[Case 2]
The sum of the lengths of the non-positive excursions larger than $M$ in $\{a,\dots,b\}$
is $\geq\gamma \tfrac{b-a}{2}$.

\noindent
In Case 1, by concatenating the strictly positive excursions larger than $M$ in $\{a,\dots,b\}$,
we can bound the total entropy carried by these excursions from above by the entropy of a single
excursion large at least $\gamma \tfrac{b-a}{2}$. Therefore, the gain in the free energy
obtained by relaxing this large excursion is, for $b-a$ large enough, of order $\exp[C_2(b-a)]$,
with $C_2=\frac{\gamma}{2}[u(\lambda)-\tilde{\kappa}(\lambda)]$. By choosing a small enough open
neighborhood $\cV$ of $(\alpha_0,\beta_0,\lambda_0)$ in $\cL_u$, we get that there exists a $c>0$
such that, for all $(\alpha,\beta,\lambda)\in \cV$, we have $u(\alpha,\beta;\lambda)
-\tilde{\kappa}(\lambda)\geq c$. Thus, $\frac{c\gamma}{2}$ is a lower bound for $C_2$,
uniform in $\cV$. In Case 2, a similar argument applies.

\medskip\noindent
(ii) Again we only sketch the proof. We partition $\{a,\dots,b\}$ into $\frac{b-a}{R}$ blocks
of size $R$. A block is called "good" if it carries only monomers of type $A$. By the law of
large numbers, there exists a $c_R>0$ such that approximately $c_R(b-a)$ of the blocks are good.
We can therefore choose $\gamma$ close enough to $1$ such that, on $\cA_{1,\gamma,T}$, at least
$\frac{c_R}{2}(b-a)$ of the good blocks are covered only by excursions smaller than $T$. Such
blocks are called "good $T$-blocks". Consequently, more than $\frac{R}{T}$ excursions are
required to cover a good $T$-block and so at least $\frac{R}{T}$ steps in each good $T$-block
are below the interface. Thus, by relaxing the condition $\cA_{1,\gamma,T}$, we can replace on
each good $T$-block the excursions shorter than $T$ by a long strictly positive excursion. This
does not decrease the entropy, but increases the energy by at least $\beta \frac{R}{T}$ on each good
$T$-block. Summed up these energy increases are of order $\frac{c_R}{2}(b-a) \beta \frac{R}{T}$.
\epr

\noindent
{\bf Step 3.} Let $D = \cA_{1,\frac34,M} \cap \cA_{2,\frac34,M}$ and $\cT_M=\{\cE_M(\pi_1)\colon\,\pi_1
\in \cA_{1,\frac34,M}\}$. For $i=1,2$ and $\cE_M\in\cT_M$, let $\cJ^i(\cE_M)=\{\pi_i\colon
\cE_M(\pi_i)=\cE_M\}$. Then Lemma \ref{lemM} applied at $\gamma_0=\frac34$ implies
that there exists $M\in\N$, an open neighborhood $\cV$
of $(\alpha_0,\beta_0,\lambda_0)$ in $\cL_u$ and $C_1, C_2>0$  such that for $L\geq b$ and
$(\alpha,\beta,\lambda)\in \cV$ we have $[P_L^{\omega,\cI}]^{\otimes 2}
(D^c)\leq 2C_1 e^{-C_2(b-a)}$, so that it remains to estimate $[P_L^{\omega,
\cI}]^{\otimes 2}(B_{a,b}\cap D)$.
\be{pn2}
\begin{aligned}
&[P_L^{\omega,\cI}]^{\otimes 2}(B_{a,b} \cap D)
=\sum_{\cE^1_M,\cE^2_M\in \cT_M}
[P_L^{\omega,\cI}]^{\otimes 2}\Big(B_{a,b} \cap \{\cJ^1(\cE^1_M) \times \cJ^2(\cE^2_M)\}\Big)\\
&=\sum_{\cE^1_M,\cE^2_M\in \cT_M}
E_L^{\omega,\cI}\Big(1_{\{\pi_2\in\cJ^2(\cE^2_M)\}}\
P_L^{\omega,\cI}\big( B_{a,b}\cap\{\pi_1\in \cJ^1(\cE^1_M)\} \mid \pi_2\big)\Big).
\end{aligned}
\ee
Next, set $\tilde{i}=2$ if $i=1$ and vice versa, and define
\begin{equation}
\cR^i(\cE^1_M,\cE^2_M) = \Big\{j\in\{1,\dots,l_{i,M}\}\colon\,
b_k^{\tilde{i}}\ \text{or}\  e_k^{\tilde{i}} \in\{b_j^i,e_j^i\}
\mbox{ for some }  k\in\{1,\dots,l_{\tilde{i},M}\}\Big\}.
\end{equation}
By the definition of $\mathcal{A}_{i,\frac34,M}$ in (\ref{AiMdef}), for any $\cE^1_M,\cE^2_M\in
\cT_M$ there are at least $\frac14(b-a)$ steps in $\{a,\dots,b\}$ belonging to excursions
smaller than $M$, in both $\pi_1$ and $\pi_2$. Therefore we can choose a $C>0$ small
enough such that, for all $\cE^1_M,\cE^2_M\in \cT_M$, either $|\cR^i(\cE^1_M,\cE^2_M)|
\geq C (b-a)/M$ or $|\cR^{\tilde{i}}(\cE^1_M,\cE^2_M)|\geq C (b-a)/M$. Without loss
of generality, we may assume that $|\cR^1(\cE^1_M,\cE^2_M)|\geq C (b-a)/M$. Because of the
condition imposed by $B_{a,b}$, for all $j\in\cR^1(\cE^1_M,\cE^2_M)$ the excursion of $\pi_1$
on $\{b_j^1,\dots,e_j^1\}$ has some prohibited parts. Indeed, $\pi_2$ starts or ends
an excursion inside $\{b_j^1,\dots,e_j^1\}$, which restricts the possible excursions
of $\pi_1$, because $\pi_1$ cannot make the same step as $\pi_2$ at the same height.
Moreover, there is only a finite number of possibilities to make an excursion smaller
than $M$ and so, for all $j\in \cR^1(\cE^1_M,\cE^2_M)$, relaxing the condition $B_{a,b}$
on $\{b_j^1,\dots,e_j^1\}$ amounts to increasing the probability in \eqref{pn2} by a
factor $Q>1$ depending only on $M$, i.e.,
\be{losfe}
P_L^{\omega,\cI}\Big(B_{a,b}\cap\{\pi_1\in \cJ^1(\cE^1_M)\} \mid \pi_2\Big)
\leq Q^{-|\cR^1(\cE^1_M,\cE^2_M)|}\,P_L^{\omega,\cI}\big(\{\pi_1\in \cJ^1(\cE^1_M)\}\big).
\ee
Therefore, since $|\cR^1(\cE^1_M,\cE^2_M)|\geq C(b-a)/M$, (\ref{pn2}) becomes
\begin{equation}
[P^{\omega,\cI}]^{\otimes 2}(B_{a,b} \cap D) \leq  e^{-C \frac{b-a}{M} \log Q},
\end{equation}
which proves (\ref{pivest}) and completes the proof of Proposition \ref{uidiff}.
\epr

The following proposition provides the link between $u^\cI$ and $\phi^\cI$.

\begin{proposition}
\label{p:varform}
For $\lambda\geq 0$,
\begin{equation}
\label{eqa}
u^\cI(\lambda)=\sup_{\rho\in (0,1]} \{-\lambda \rho+\phi^\cI(1/\rho)\}.
\end{equation}
\end{proposition}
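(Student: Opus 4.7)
The strategy is a Laplace-type identification that reads off the Fenchel--Legendre duality directly from the path decomposition. Partition $\cW_L = \bigcup_{h=1}^L \cW_{L,h}$, where $\cW_{L,h}$ consists of the $L$-step paths with exactly $h$ horizontal steps (hence ending at $(h,0)$). Since $H_L^{\omega,\cI}$ in (\ref{newham}) only sees the vertical sign of each step, summing over $\pi \in \cW_{L,h}$ produces precisely $Z^{\omega,\cI}_{\mu h,\,h}$ from (\ref{Zinf})--(\ref{freeengs}) with $\mu = L/h \geq 1$. Hence
\[
U_L^{\omega,\cI}(\alpha,\beta;\lambda) = \sum_{h=1}^{L} e^{-\lambda h}\, Z^{\omega,\cI}_{\mu h,\,h}\Big|_{\mu = L/h},
\]
so that, writing $\rho = h/L \in (0,1]$, the logarithm of each summand is $L\,[-\lambda \rho + \phi^\cI(1/\rho)]$ to leading order, matching the expression inside the supremum on the right-hand side of (\ref{eqa}).

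For the lower bound, I would fix a rational $\rho \in (0,1]$, restrict to a subsequence of $L$ with $\rho L \in \N$, and retain only the single term $h = \rho L$:
\[
\tfrac{1}{L}\log U_L^{\omega,\cI} \;\geq\; -\lambda \rho + \tfrac{1}{\mu h}\log Z^{\omega,\cI}_{\mu h,\,h} \;\xrightarrow[L \to \infty]{}\; -\lambda \rho + \phi^\cI(1/\rho) \quad \omega\text{-a.s.},
\]
by Lemma \ref{l:feinflim} applied with $\mu = 1/\rho$ fixed and $h = \rho L \to \infty$. Taking the supremum over rational $\rho$ and invoking continuity of $\mu \mapsto \mu \phi^\cI(\mu)$ on $[1,\infty)$ (noted after Lemma \ref{l:kamu}) yields $u^\cI(\lambda) \geq \sup_{\rho \in (0,1]}\{-\lambda \rho + \phi^\cI(1/\rho)\}$.

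For the upper bound, I would bound the sum by $L$ times its largest term:
\[
\tfrac{1}{L}\log U_L^{\omega,\cI} \;\leq\; \tfrac{\log L}{L} + \max_{1 \leq h \leq L}\Big\{-\lambda \tfrac{h}{L} + \tfrac{1}{L}\log Z^{\omega,\cI}_{\mu h,\,h}\Big|_{\mu = L/h}\Big\}.
\]
Along a subsequence, the maximiser $h_L$ satisfies $h_L/L \to \rho^* \in [0,1]$. When $\rho^* > 0$, pointwise $\omega$-a.s.\ convergence of the inner quantity to $\phi^\cI(1/\rho^*)$ delivers an upper bound of $-\lambda \rho^* + \phi^\cI(1/\rho^*) \leq \sup_{\rho}\{-\lambda \rho + \phi^\cI(1/\rho)\}$. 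When $\rho^* = 0$, Lemma \ref{mulim}(i) together with the bound $\phi^\cI(\mu) \leq \alpha + \hat\kappa(\mu)$ from (\ref{phiublb}) and $\hat\kappa(\mu) \to 0$ (Lemma \ref{l:kamu}(iii)) shows that the inner quantity tends to $0$; since $-\lambda \rho + \phi^\cI(1/\rho) \to 0$ as $\rho \da 0$, this boundary regime is absorbed into the supremum.

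The main obstacle is ensuring sufficient uniformity in $\rho$ to pass from the discrete maximum to the continuous supremum. I would handle this in two pieces: on any compact subinterval $[\rho_0, 1] \subset (0,1]$, pointwise $\omega$-a.s.\ convergence together with the concavity of $\mu \mapsto \mu \phi^\cI(\mu)$ (which enforces equicontinuity of the prelimit free energies) gives uniform convergence; on the remaining strip $\rho \in (0, \rho_0)$, the entropy/energy bound supplied by (\ref{phiublb}) and Lemma \ref{l:kamu}(iii) produces an upper bound that vanishes uniformly in $L$ as $\rho_0 \da 0$. Combining the two pieces with the matching lower bound yields (\ref{eqa}).
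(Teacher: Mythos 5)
Your lower bound coincides with the paper's: restrict the sum in \eqref{newmod} to $\cW_L(\rho)$ at fixed $\rho$ and let $L\to\infty$ via Lemma \ref{l:feinflim}. The gap is in the upper bound. After bounding the sum by $L$ times its largest term, you need $\limsup_{L\to\infty}\{-\lambda h_L/L+\tfrac1L\log U_L^{\omega,\cI}(\lambda,h_L/L)\}\leq\sup_\rho\{-\lambda\rho+\phi^\cI(1/\rho)\}$ $\omega$-a.s.\ along the argmax $h_L$, and your subsequence analysis therefore requires the free energy to converge along a \emph{moving} density $h_L/L\to\rho^*$, whereas Lemma \ref{l:feinflim} only gives $\omega$-a.s.\ convergence at a \emph{fixed} $\rho$. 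Your proposed repair --- that concavity of $\mu\mapsto\mu\phi^\cI(\mu)$ ``enforces equicontinuity of the prelimit free energies'' --- has the logic reversed: concavity of the \emph{limit} says nothing about the prelimit. What would do the job is concavity of the \emph{prelimit} $h\mapsto\tfrac1L\log U_L^{\omega,\cI}(\lambda,h/L)$ at fixed $L,\omega$ (together with pointwise convergence, this does give local uniform convergence), but that is a discrete log-concavity property of a microcanonical partition sum that you do not establish and that is not obviously true. The same defect recurs in your $\rho^*=0$ branch, where Lemma \ref{mulim}(i) again concerns only the limit $\phi^\cI$, not the prelimit with $\mu=L/h_L$ diverging.

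The paper sidesteps the uniformity issue entirely by working with $E(L)=\E[\tfrac1L\log U_L^{\omega,\cI}(\lambda)]$ and invoking two tools you omit. First, a concentration inequality analogous to Lemma \ref{conc} (see \eqref{majo}) shows that $\tfrac1L\log U_L^{\omega,\cI}(\lambda,j/L)$ is within $\gep$ of its mean except on an event of probability $\leq C\exp[-\gep^2L/C(\alpha+\beta)^2]$, and a union bound over the $L$ values of $j$ then controls all densities simultaneously. Second, a standard subadditivity argument gives the \emph{non-asymptotic} estimate $\E[\tfrac1L\log U_L^{\omega,\cI}(\lambda,j/L)]\leq-\lambda j/L+\phi^\cI(L/j)$, valid for every finite $L$ and $j$, which replaces your limit along $h_L$ and removes the need for any uniform-in-$\rho$ convergence. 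Combining these, $E(L)\leq\tfrac{\log L}{L}+\sup_\rho\{-\lambda\rho+\phi^\cI(1/\rho)\}+\gep$ up to exponentially small corrections, which suffices since $u^\cI(\lambda)=\lim_{L\to\infty}E(L)$ by Kingman. To salvage your route you would at minimum need to import this concentration-plus-subadditivity input, or else actually prove discrete concavity in $h$.
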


\bpr
For $\rho\in(0,1]$, let $\cW_L(\rho)=\{\pi\in \cW_L\colon\,h(\pi)=\rho L\}$ and
\begin{equation}
U_L^{\omega,\cI}(\lambda,\rho)=\sum_{\pi \in \cW_L(\rho)}
e^{-\lambda h(\pi)-H_L^{\omega,\cI}(\pi)}.
\end{equation}
By restricting the sum defining $U_L^{\omega,\cI}(\lambda)$ in (\ref{newmod}) to the
set $\cW_L(\rho)$, we obtain $u^\cI(\lambda)\geq \lim_{L\to\infty}\E[L^{-1}\log
U_L^{\omega,\cI}(\lambda,\rho)] = -\lambda \rho + \phi^\cI(1/\rho)$. Therefore,
optimising over $\rho$, we get $u^\cI(\lambda) \geq \sup_{\rho\in(0,1]}\{-\lambda
\rho +\phi^\cI(1/\rho)\}$.

To prove the reverse inequality, we note that an analogue of the concentration inequality
\eqref{concest} gives that there exists a $C>0$ such that, for all $L\in\N$, $\rho\in
(0,1]$ and $\gep>0$,
\begin{equation}
\label{majo}
\P\left(\frac{1}{L} \log U_L^{\omega,\cI}(\lambda,\rho) \geq
\E\left[\frac{1}{L}\log U_L^{\omega,\cI}(\lambda,\rho)\right]+\gep\right)
\leq C \exp[-\gep^2 L/C(\alpha+\beta)^2].
\end{equation}
Next, we define the event
\begin{equation}
J(L) =\left\{\exists\,j\in\{1,\dots,L\}\colon\,\frac{1}{L}
\log U_L^{\omega,\cI}(\lambda,j/L) \geq \E\left[\frac{1}{L}
\log U_L^{\omega,\cI}(\lambda,j/L)\right]+\gep\right\},
\end{equation}
and abbreviate $E(L)=E[L^{-1}\log U_L^{\omega,\cI}(\lambda)]$. Then we can write
\be{dira}
E(L) \leq \E\left(\left(\frac{1}{L}\log U_L^{\omega,\cI}(\lambda)\right)
1_{J(L)}\right)
+ \E\left(\frac{1}{L} \log \left(\sum_{j=1}^{L} U_L^{\omega,\cI}(\lambda,j/L)\right)
1_{[J(L)]^c}\right).
\ee
Trivially, the quantity $L^{-1}\log U_L^{\omega,\cI}(\lambda)$ can be bounded from above
by $\alpha+\tilde{\kappa}(0)$ (recall \eqref{equa4}), uniformly in $L$ and $\omega$. Therefore, with the help
of the inequality in \eqref{majo}, we see that the first term in the r.h.s.\ of
\eqref{dira} is bounded from above by $(\alpha+\tilde{\kappa}(0))CL\exp[-\gep^2 L/(C
(\alpha+\beta)^2)]$, which tends to zero as $L\to\infty$. Moreover, for every
$j\in \{1,\dots,L\}$, a standard subadditivity argument gives that $\E(L^{-1}
\log U_L^{\omega,\cI}(\lambda,j/L))\leq -\lambda j/L+\phi^\cI(L/j)$. Therefore, on
the event $[J(L)]^c$, we have that $L^{-1}\log U_L^{\omega,\cI}(\lambda,j/L)\leq -
\lambda j/L+\phi^\cI(L/j)+\gep$ for all $j\in \{1,\dots,L\}$. Thus, the second
term in the r.h.s.\ of \eqref{dira} is bounded from above by $(\log L)/L+
\max_{\rho\in(0,1]}\{-\lambda\rho+\phi^\cI(1/\rho)\}+\gep$. Letting $L\to\infty$
and $\gep\downarrow 0$, we obtain $\lim_{L\to \infty} E_1(L)\leq \max_{\rho\in(0,1]}
\{-\lambda \rho+\phi^\cI(1/\rho)\}$, which is the reverse inequality we were after.
\epr

Since $\rho\mapsto\phi^\cI(1/\rho)$ is continuous and concave, we can apply the
Fenchel-Legendre duality lemma (see Dembo and Zeitouni \cite{DemZei}, Lemma 4.5.8), to obtain
\begin{equation}
\label{inv1}
\phi^\cI(\mu)=\inf_{\lambda\geq 0}\{\lambda/\mu+u^\cI(\lambda)\}, \qquad \mu \geq 1.
\end{equation}
In the same spirit we have
\begin{equation}
\begin{aligned}
\label{eqa2}
\tilde{\kappa}(\lambda)&=\sup_{\rho\in (0,1]}\{-\lambda \rho+\hat{\kappa}(1/\rho)\},
\qquad \lambda \geq 0,\\
\hat{\kappa}(\mu)&=\inf_{\lambda\geq 0}\{\lambda/\mu+\tilde{\kappa}(\lambda)\},
\qquad \mu \geq 1.
\end{aligned}
\end{equation}

\subsection{Positive and finite curvature of $u^\cI$}
\label{Sustrconv}

In Propositions \ref{uidiff}--\ref{p:varform} we found that $u^\cI$ is smooth
and is the Fenchel-Legendre transform of $\phi^\cI$. In Section \ref{S5.2} we
will exploit these properties to obtain information on $\phi^\cI$. To prepare
for this, we first need to show the following. It is immediate from (\ref{newmod})
that $\lambda \mapsto u^\cI(\alpha,\beta;\lambda)$ is convex. Lemma \ref{assum2}
and Assumption \ref{assum} below state that it has a strictly positive and
finite curvature. To ease the notation, we suppress $\alpha,\beta$ from
some of the expressions.

\bl{assum2}
For all $(\alpha,\beta,\lambda)\in\cL_u$, $\partial^2 u^\cI(\alpha,\beta;\lambda)/\partial
\lambda^2>0$.
\el

\bpr
It suffices to prove that for all $(\alpha,\beta,\lambda_0)\in\cL_u$ there exist
$C,\gep>0$ such that, for all $\lambda\in I_\gep(\lambda_0)=[\lambda_0-\gep,\lambda_0+\gep]$
and $L\geq 1$
\be{dersec2}
\E\Big([E_{L}^{\omega,\cI}]^{\otimes 2}\big([h(\pi_1)-h(\pi_2)]^2\big)\Big)\geq C L,
\ee
where $E_L^{\omega,\cI}$ is expectation w.r.t.\ the law in (\ref{newmodlaw}),
and $\lambda$ is suppressed from the notation.

\medskip\noindent
{\bf Step 1.} By lemma \ref{lemM}(ii), we can assert that
for all $T_0\in\N$ there exist $z_0\in(0,1)$
and $L_0\in\N$ such that, for all $L\geq L_0$ and $\lambda\in I_\gep(\lambda_0)$,
\be{largeexcu2}
\E\bigg(P_L^{\omega,\cI}\Big(\Big\{\sum_{k=1}^{l_L}\tau_k\,
1_{\{\tau_k> T_0\}}\geq z_0 L\Big\}\Big)\bigg)
\geq \frac34.
\ee
where $\tau_k$ is the length of the $k$-th excursion. Similarly, by Lemma \ref{lemM}(i), there
exists $M_0\in\N$ with $M_0>T_0$ and $L_1\in\N$ such that, for all $L\geq L_1$ and
$\lambda\in I_\gep(\lambda_0)$,
\be{largeexcu3}
\E\bigg(P_L^{\omega,\cI}\Big(\Big\{\sum_{k=1}^{l_L}\tau_k\,
1_{\{\tau_k\leq M_0\}}\geq \left(1-\frac{z_0}{2}\right) L\Big\}\Big)\bigg)
\geq \frac34.
\ee
Abbreviate $\Gamma_0=\{T_0+1,\dots,M_0\}\times\{-1,+1\}$. Let $(j,\sigma)\in\Gamma_0$
and $L\geq L_2=\max\{L_0,L_1\}$. Define
\begin{equation}
\label{ABdefs}
A(L)=\Big\{\sum_{k=1}^{l_L}\tau_k 1_{\{T_0<\tau_k\leq M_0\}}\geq
\frac{z_0}{2}L\Big\}\ \ \text{and}\ \
B_{(j,\sigma)}(L)=\Big\{\sum_{k=1}^{l_L}\tau_k 1_{\{\tau_k=j, \sigma_k=\sigma\}}\geq
\frac{z_0}{4(M_0-T_0)}L\Big\},
\end{equation}
where $\sigma_k$ is the sign of the $k$-th excursion.
It follows from (\ref{largeexcu2}--\ref{largeexcu3}) that $\E\big(P_L^{\omega,\cI}
(A(L))\big)\geq \frac12$ and $A(L)\subset\cup_{(j,\sigma)\in \Gamma_0} B_{(j,\sigma)}(L)$.
Since $|\Gamma_0|=2 (M_0-t_0)$, for all $L\geq L_2$ and $\lambda\in I_\gep(\lambda_0)$,
there exists a
$(j_L,\sigma_L)\in \Gamma_0$ such that
\be{estimon1}
\E\big(P_L^{\omega,\cI}(B_{(j_L,\sigma_L)}(L))\big)
\geq \frac{1}{4(M_0-T_0)}.
\ee

\medskip\noindent
{\bf Step 2.} Henceforth, we abbreviate $B^L=B_{(j_L,\sigma_L)}(L)$. We will show that
the quantity
\begin{equation}
\label{Ecoupdef}
\cH^L=\E\Big([E_L^{\omega,\cI}]^{\otimes2}
\Big([h(\pi_1)-h(\pi_2)]^2\,1_{B^L}(\pi_1)\,1_{B^L}(\pi_2)\Big)\Big)
\end{equation}
is bounded from below by $CL$ for some $C>0$, which will complete the proof of
(\ref{dersec2}). For given $\pi$, we let
\be{Tpidef}
T(\pi)=\{(T_1,T'_{1},\sigma_1),\dots,(T_{l_L},T'_{l_L},\sigma_{l_L})\}
\ee
denote the starting points, ending points and signs of the $l_L$ excursions of $\pi$
between $0$ and $L$. For $r\in\N$, we set
\be{Zrdef}
\cZ_r^L=\{T(\pi)\colon\,\pi\in B^L,l_L=r\},
\ee
and we denote by $\cE(T,\sigma)$ the set of excursions of length $T$ and sign $\sigma$.
Futhermore, we write $(\varepsilon_1,\dots,\varepsilon_r)\sim T$ as short hand notation
for $(\varepsilon_1,\dots,\varepsilon_r)\in\cE(T'_1-T_1,\sigma_1)\times\dots\times
\cE(T'_r-T_r,\sigma_r)$. With this notation, we can write the
quantity in (\ref{Ecoupdef}) as
\be{secder}
\cH^L=\sum_{r,\tilde{r}}\sum_{T\in\cZ_r^L}
\sum_{\tilde{T}\in\cZ_{\tilde{r}}^L}
\E\Bigg[\frac{1}{(Z_L^{\omega})^2}\,
\bigg(\prod_{s=1}^r\  \prod_{\tilde{s}=1}^{\tilde{r}}\,
Z_{T,s}^{\omega}\ Z_{\tilde{T},\tilde{s}}^{\omega}\bigg)\,
R_{r,T,\tilde{r},\tilde{T}}^L\Bigg],
\ee
with $Z_L^{\omega}$ the total partition sum,
\be{Rdef*}
R_{r,T,\tilde{r},\tilde{T}}^L = \sum_{(\varepsilon_1,\dots,\varepsilon_r)\sim T}\,
\sum_{(\tilde{\varepsilon}_1,\dots,\tilde{\varepsilon}_{\tilde{r}})\sim\tilde{T}}\,
\prod_{s=1}^r\  \prod_{\tilde{s}=1}^{\tilde{r}}\frac{e^{-\lambda h(\varepsilon_s)}
e^{-\lambda h(\tilde{\varepsilon}_{\tilde{s}})}}
{Z_{T,s}\, Z_{\tilde{T},\tilde{s}}}\Big[\sum_{s=1}^r
h(\varepsilon_s)-\sum_{\tilde{s}=1}^{\tilde{r}}
h(\tilde{\varepsilon}_{\tilde{s}})\Big]^2
\ee
and (recall \eqref{Zinf})
\begin{equation}
\begin{aligned}
Z_{T,s}^{\omega} &=\sum_{\varepsilon_s\in \cE(T,s)}
e^{-\lambda h(\varepsilon_s)-H^{\omega,\cI}(\varepsilon_s)},\\
Z_{T,s} &=\sum_{\varepsilon_s\in \cE(T,s)}
e^{-\lambda h(\varepsilon_s)}.
\end{aligned}
\end{equation}
Note that $R_{r,T,\tilde{r},\tilde{T}}^L$ does not depend on $\omega$.

\medskip\noindent
{\bf Step 3.} Putting
\begin{equation}
X_s=h(\varepsilon_s),\qquad \tilde{X}_{\tilde{s}}=h(\tilde{\varepsilon}_{\tilde{s}}),
\qquad t_0=z_0/4M_0(M_0-T_0),
\end{equation}
we note that in $R_{r,T,\tilde{r},\tilde{T}}^L$ the random variables
\begin{equation}
\label{XtilXdef}
(X_1,\dots,X_r,\tilde{X}_1,\dots,\tilde{X}_{\tilde{r}})
\end{equation}
are independent, and that the law of $X_s$ depends on $(T'_s-T_s,\sigma_s)$. Since
$(T,\tilde{T})\in\cZ_r^L\times\cZ_{\tilde{r}}^L $, there are at least $t_0 L$
excursions of length $j_L$ and sign $\sigma_L$ in $T$ and $\tilde{T}$. Let
$(s_1,\dots,s_{t_0 L})$ and $(\tilde{s}_1,\dots,\tilde{s}_{t_0 L})$ be the indices
of the $t_0 L$ first such excursions in $T$ and $\tilde{T}$, put
\begin{equation}
Y_{r,T,\tilde{r},\tilde{T}}^L
=\sum_{s\in\{1,\dots,r\}\setminus\{s_1,\dots,s_{t_0 L}\}} X_s
\quad -\quad
\sum_{\tilde{s}\in\{1,\dots,\tilde{r}\}\setminus
\{\tilde{s}_1,\dots,\tilde{s}_{t_0 L}\}} \tilde{X}_{\tilde{s}},
\end{equation}
and write (\ref{Rdef*}) as
\begin{equation}
\label{RTTrep}
R_{r,T,\tilde{r},\tilde{T}}^L=E_{T,\tilde{T}}
\Big(\Big[\sum_{k=1}^{t_0 L} W_k + Y_{r,T,\tilde{r},\tilde{T}}^L\Big]^2\Big).
\end{equation}
where $W_k = X_{s_k}-\tilde{X}_{\tilde{s}_k}$ and $E_{T,\tilde{T}}$ denotes expectation
w.r.t.\ the law of (\ref{XtilXdef}). Clearly, $W=(W_k)_{k\in\{1,\dots,t_0 L\}}$ are i.i.d.,
symmetric and bounded random variables. Denote their variance by $v_L$. We can choose
$T_0$ large enough so that the $W_k$ are not constant. Moreover, since the $W_k$ have
only a finite number of laws, there exists an $a>0$ such that $v_L>a$ for all $\lambda
\in I_\gep(\lambda_0)$ and $L\geq L_2$.

\medskip\noindent
{\bf Step 4.}
At this stage, we may assume without loss of generality that $P_{T,\tilde{T}}(Y_{r,T,\tilde{r},\tilde{T}}^L\geq 0)
\geq \frac12$. Then (\ref{RTTrep}) gives
\be{minor}
R_{r,T,\tilde{r},\tilde{T}}^L\geq P_{T,\tilde{T}}(Y_{r,T,\tilde{r},\tilde{T}}^L\geq 0)\
\tfrac12\ E_{T,\tilde{T}}\Big(\Big[\sum_{k=1}^{t_0 L}W_k\Big]^2\Big)
\geq \tfrac14 E_{(j_L,\sigma_L)}\Big(\Big[\sum_{k=1}^{t_0 L} W_k\Big]^2\Big),
\ee
where $E_{(j_L,\sigma_L)}$ is expectation w.r.t.\ the law of $W$. Since the $W_k$ take
only values smaller than $2M_0$, their third moments are bounded by some finite $N$
uniformly in $\lambda\in I_\gep(\lambda_0)$ and $(j,\sigma)\in\Gamma_0$. Therefore we
can apply the Berry-Esseen theorem and, writing $\xi(u)=P(\cN(0,1)\leq u)$, $u \in \mathbb{R}$ with $N(0,1)$
a standard normal random variable, can assert that, for all $u\in\R$, $\lambda\in I_\gep
(\lambda_0)$ and $(j,\sigma)\in\Gamma_0$,
\be{minor2}
\bigg|P_{(j,\sigma)}\Big(\sum_{k=1}^{t_0 L} W_k \leq u \sqrt{t_0 L v_L}\Big)
-\xi(u)\bigg|\leq \frac{3N}{a^{3/2} \sqrt{t_0 L}},
\ee
where $P_{(j,\sigma)}$ is the law of $W$ when $(j_L,\sigma_L)=(j,\sigma)$.
Taking the restriction of the r.h.s.\ of \eqref{minor} to the event $K=\{\sum_{k=1}^{t_0 L}
W_k/\sqrt{t_0 L v_L}\in[1,2]\}$, we obtain
\be{minor3}
R_{r,T,\tilde{r},\tilde{T}}^L\geq\frac{v_Lt_0 L}{4}
P_{(j,\sigma)}(K)\geq \frac{a t_0 L}{4}
\Big(\xi(2)-\xi(1)-\frac{6 N}{a^{3/2} \sqrt{t_0 L}}\Big),
\ee
which implies that $R_{r,T,\tilde{r},\tilde{T}}^L\geq t'_0 L$ for $L$ large enough and some $t'_0>0$.
Recalling \eqref{secder}, we can now estimate
\begin{equation}
\cH^L \geq  t'_0L\, \E\big([P_L^{\omega,\cI}]^{\otimes 2}(B^L)\big)
\geq t'_0 L/4(M_0-T_0),
\end{equation}
which yields (\ref{dersec2}) with $C=t'_0 L/4(M_0-T_0)$.
\epr

\begin{assumption}
\label{assum}
For all $(\alpha,\beta)\in\CONE$ and $\lambda> 0$ there exist $C(\lambda)>0$
and $\delta_0>0$ such that, for all $\delta\in(0,\delta_0]$,
\be{asu}
u^\cI(\lambda-\delta)+u^\cI(\lambda+\delta)-2u^\cI(\lambda)\leq C(\lambda) \delta^2.
\ee
\end{assumption}

\noindent
Although we are not able to prove this assumption, we believe it to be true for the
following reason. First, as a consequence of Proposition \ref{uidiff}, we have that,
for all $(\alpha,\beta)\in \CONE$, $\lambda\mapsto u(\alpha,\beta; \lambda)$ is infinitely
differentiable on the set $\{\lambda\in[0,\infty)\colon\,u(\alpha,\beta;\lambda)>
\tilde{\kappa}(\lambda)\}$. Since $\lambda\mapsto\tilde{\kappa}(\lambda)$ is infinitely
differentiable on $[0,\infty)$, this implies that $\lambda\mapsto u(\alpha,\beta;\lambda)$
is infinitely differentiable on the interior of the set $\{\lambda\in[0,\infty)\colon\,
u(\alpha,\beta;\lambda)=\tilde{\kappa}(\lambda)\}$. Thus, the assumption only concerns
the values of $\lambda$ located at the boundary of the latter. For these values, proving the
assumption amounts to proving the reverse of inequality \eqref{dersec2}, i.e., showing
that the variance of the number of horizontal steps made by the polymer of length $L$ is
of order $L$, which we may reasonably expect to be true. In Remark \ref{Sub5.2} we give
a weaker alternative to Assumption \ref{assum}.

\subsection{Smoothness of $\phi^\cI$ in its localized phase}
\label{S5.2}

Having collected in Section \ref{S5.1}--\ref{Sustrconv} some key properties of the
dual free energy $u^\cI$, we are now ready to look at what these imply for
$\phi^\cI$. We begin by showing that $\phi^\cI$ is strictly concave.

\begin{lemma}
\label{stconc}
Let
\begin{equation}
\label{equiRdef}
D(\delta)=\tfrac{1}{2}\phi^\cI\left(\frac{1}{\rho_0+\delta}\right)
+\tfrac{1}{2}\phi^\cI\left(\frac{1}{\rho_0-\delta}\right)
-\phi^\cI\left(\frac{1}{\rho_0}\right).
\end{equation}
Then, for all $(\alpha,\beta)\in\CONE$ and $\rho_0\in(0,1)$ there exist $C>0$ and
$\delta_0>0$ such that, for all $\delta\in (0,\delta_0]$,
\begin{equation}
\label{equi}
D(\delta) \leq -C \delta^2.
\end{equation}
This inequality implies the strict concavity of $\rho\mapsto\phi^\cI(1/\rho)$ on
$(0,1]$.
\end{lemma}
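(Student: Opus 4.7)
The plan is to derive the strict concavity of $g(\rho):=\phi^\cI(1/\rho)$ from the Fenchel--Legendre identity (\ref{inv1}) together with the quadratic upper bound of Assumption \ref{assum} on the curvature of $u^\cI$. Fix $\rho_0\in(0,1)$ and let $\lambda_0\geq 0$ be a minimiser in
\[
g(\rho_0) \;=\; \inf_{\lambda\geq 0}\{\lambda\rho_0+u^\cI(\lambda)\},
\]
whose existence follows from the continuity of $u^\cI$ and the coercivity of the minimand for $\rho_0>0$. A short separate argument, using $g(0^+)=0$ from Lemma \ref{mulim}(i), concavity of $g$, and the location of $\rho_0$ relative to the maximiser $\rho^*$ of $g$ on $[0,1]$, is then used to verify that $\lambda_0>0$, so that Assumption \ref{assum} is applicable at $\lambda_0$ and two-sided perturbations in $\lambda$ are feasible.

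The central computation is a simple duality bound. For $\gep\in(0,\lambda_0)$, substituting the suboptimal values $\lambda=\lambda_0\mp\gep$ into (\ref{inv1}) at $\mu=1/(\rho_0\pm\delta)$ yields
\begin{align*}
g(\rho_0+\delta) &\leq (\lambda_0-\gep)(\rho_0+\delta)+u^\cI(\lambda_0-\gep),\\
g(\rho_0-\delta) &\leq (\lambda_0+\gep)(\rho_0-\delta)+u^\cI(\lambda_0+\gep).
\end{align*}
Averaging these and subtracting the exact identity $g(\rho_0)=\lambda_0\rho_0+u^\cI(\lambda_0)$, the linear cross terms telescope to $-\gep\delta$, leaving only a symmetric second difference of $u^\cI$ at $\lambda_0$. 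Assumption \ref{assum} bounds this second difference by $C(\lambda_0)\gep^2$, so
\[
D(\delta)\;\leq\;-\gep\delta+\tfrac12\,C(\lambda_0)\gep^2.
\]
Optimising in $\gep$ with the choice $\gep=\delta/C(\lambda_0)$, which lies in $(0,\lambda_0)$ whenever $\delta\leq\lambda_0\, C(\lambda_0)$, produces (\ref{equi}) with $C=1/(2C(\lambda_0))$ and $\delta_0=\lambda_0\, C(\lambda_0)$.

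The final sentence of the lemma is then routine: a concave function whose symmetric second difference at every interior point is strictly negative cannot be affine on any subinterval, and is therefore strictly concave on $(0,1]$. The hard part will be the verification of $\lambda_0>0$ in Step~1, in particular at the exceptional ``peak'' $\rho_0=\rho^*$, where the minimiser degenerates to $\lambda_0=0$. In that case the two-sided perturbation has to be replaced by a one-sided one using only the right-hand forward difference of $u^\cI$ at $\lambda=0^+$, and the quadratic bound is recovered by combining Assumption \ref{assum} with Lemma \ref{assum2}, since strict convexity of $u^\cI$ in a neighbourhood of $0$ rules out a locally affine peak of $g$ and forces the symmetric second difference of $g$ at $\rho^*$ to be of order $\delta^2$.
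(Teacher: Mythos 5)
Your core computation is exactly the paper's proof, up to the cosmetic reparametrisation $\gep = x\delta$: the paper substitutes $\lambda = \lambda_0 \mp x\delta$ into \eqref{inv1}, arrives at $D(\delta)\leq -x\delta^2 + \tfrac12[u^\cI(\lambda_0-x\delta)+u^\cI(\lambda_0+x\delta)-2u^\cI(\lambda_0)]$, applies Assumption~\ref{assum}, and then fixes $x$; your optimisation over $\gep$ is the same choice. The closing argument for strict concavity is also the same (a midpoint comparison). So the proposal is essentially the paper's proof in slightly different clothing.

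The one point where you genuinely diverge is in flagging the case $\lambda_0 = 0$. This is in fact a real subtlety that the paper silently passes over: the paper writes $\lambda_0 \geq 0$ and then picks $\lambda = \lambda_0 - x\delta$ in \eqref{inv1}, which is only a legitimate substitution when $\lambda_0 > 0$, and Assumption~\ref{assum} itself is stated only for $\lambda>0$. So you are right to worry. However, two things in your treatment of this case are off. First, by the duality between $g(\rho)=\phi^\cI(1/\rho)$ and $u^\cI$, the constrained minimiser $\lambda_0(\rho_0)$ of $\lambda\mapsto\lambda\rho_0+u^\cI(\lambda)$ over $\lambda\geq 0$ satisfies $\lambda_0 = \max\{-g'(\rho_0),0\}$ wherever $g$ is differentiable; hence $\lambda_0 = 0$ not only at the single point $\rho^*=\arg\max g$ but on the entire interval $(0,\rho^*]$ where $g'\geq 0$. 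Your Step~1 ``short separate argument'' for $\lambda_0>0$ cannot therefore succeed for all $\rho_0\in(0,1)$ unless you first show $\rho^*=0^+$, which you do not attempt and which is not obvious (indeed in the localized phase $\phi^\cI$ attains a positive value at some finite $\mu$ while $\phi^\cI(\infty)=0$ and $\phi^\cI(1)=\tfrac12(\beta-\alpha)\leq 0$, so $g$ does have an interior peak). Second, the proposed fix for the degenerate set is not a proof: a one-sided perturbation $\lambda=\gep>0$ only produces a useful bound if you have a quadratic \emph{upper} bound on $u^\cI(\gep)-u^\cI(0)-\gep (u^\cI)'(0^+)$, but Assumption~\ref{assum} is not applicable at $\lambda=0$, and Lemma~\ref{assum2} gives a \emph{lower} curvature bound (strict convexity), which points in the wrong direction. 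The last sentence of your sketch, that strict convexity of $u^\cI$ near $0$ ``rules out a locally affine peak of $g$ and forces the symmetric second difference of $g$ at $\rho^*$ to be of order $\delta^2$,'' confuses strict convexity of the dual with a finite-curvature upper bound on the primal; by Legendre duality they are not the same statement, and the latter is precisely what is missing. In short: you correctly identified a gap present also in the paper's own proof, but your characterisation of when it occurs is too narrow and your proposed repair does not close it.
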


\begin{proof}
Lemma \ref{assum2} states the strict convexity of $\lambda\mapsto u^\cI(\lambda)$,
which implies the uniqueness of the maximiser in the variational formula \eqref{inv1},
i.e., there exists a unique $\lambda_0=\lambda_0(\rho)\geq 0$ such that $\phi^\cI
(1/\rho_0)=\lambda_0\rho_0+u^\cI(\lambda_0)$. Let $x>0$. By picking $\lambda=\lambda_0
-x\delta$ in \eqref{inv1} with $\mu=1/(\rho_0+\delta)$, and $\lambda=\lambda_0+x \delta$
in \eqref{inv1} with $\mu=1/(\rho_0-\delta)$, we obtain
\begin{equation}
\begin{aligned}
\label{eqa3}
D(\delta)
&\leq \tfrac12[(\lambda_0-x\delta)(\rho_0+\delta)+u^\cI(\lambda_0-x\delta)]\\
&\qquad +\tfrac12[(\lambda_0+x\delta)(\rho_0-\delta)
+u^\cI(\lambda_0+x\delta)]-\lambda_0\rho_0-u^\cI(\lambda_0)\\
&=-x\delta^2+\tfrac12[u^\cI(\lambda_0-x\delta)
+u^\cI(\lambda_0+x\delta)-2u^\cI(\lambda_0)].
\end{aligned}
\end{equation}
Picking $x=1/2C(\lambda_0)$, with $C(\lambda_0)$ the constant in Assumption \ref{assum},
we see that \eqref{eqa3} implies, for $0<\delta<2C(\lambda_0) \delta_0$,
\be{sconc}
D(\delta) \leq -x\delta^2+C(\lambda_0) x^2\delta^2=-\delta^2/4C(\lambda_0),
\ee
which proves (\ref{equi}). To prove the claim made below (\ref{equi}), pick
$1 \leq u < v$ and consider \eqref{equiRdef} at the point $\rho_0=(u+v)/2$.
Then, by (\ref{equiRdef}--\ref{equi}), there exists a $0<\delta<(v-u)/2$ such
that
\begin{equation}
\label{eqimp}
\frac{\phi^\cI(\frac{1}{\rho_0+\delta})-\phi^\cI(\frac{1}{\rho_0})}{\delta}
< \frac{\phi^\cI(\frac{1}{\rho_0})-\phi^\cI(\frac{1}{\rho_0-\delta})}{\delta}.
\end{equation}
Since $v>\rho_0+\delta>\rho_0-\delta>u$, it follows that
\be{partdifineq}
\frac{\partial^-\phi^\cI}{\partial\rho}\left(\frac{1}{\rho}\right)|_{\rho=v}
\leq \mbox{ {\rm l.h.s.} (\ref{eqimp}) } < \mbox{ {\rm r.h.s.} (\ref{eqimp}) }
\leq \frac{\partial^+\phi^\cI}{\partial\rho}\left(\frac{1}{\rho}\right)|_{\rho=u},
\ee
with $-$ and $+$ denoting the left- and the right-derivative.
\end{proof}

We are now ready to prove that $\phi^\cI$ is smooth. Let
\begin{equation}
\cL_\phi=\big\{(\alpha,\beta,\mu) = \CONE \times [1,\infty)\colon\,
\phi^\cI(\alpha,\beta;\mu)>\hat{\kappa}(\mu)\big\},
\end{equation}
i.e., the region where the single linear interface model is localized.

\begin{proposition}
\label{phicinf}
$(\alpha,\beta,\mu)\mapsto\phi^\cI(\alpha,\beta;\mu)$ is infinitely differentiable on
$\cL_\phi$.
\end{proposition}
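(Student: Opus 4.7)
The plan is to invert the Fenchel--Legendre duality from Proposition \ref{p:varform} and apply the implicit function theorem at the optimising $\lambda$. Recall from \eqref{inv1} that
\[
\phi^\cI(\alpha,\beta;\mu) = \inf_{\lambda \geq 0} \left\{ \frac{\lambda}{\mu} + u^\cI(\alpha,\beta;\lambda) \right\}.
\]
The two key ingredients are that $u^\cI$ is $C^\infty$ on $\cL_u$ (Proposition \ref{uidiff}) and that $\partial^2 u^\cI/\partial\lambda^2 > 0$ on $\cL_u$ (Lemma \ref{assum2}).

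The first step is to show that, for every $(\alpha,\beta,\mu) \in \cL_\phi$, the infimum is attained at a unique $\lambda^\ast = \lambda^\ast(\alpha,\beta,\mu) \geq 0$ satisfying $(\alpha,\beta,\lambda^\ast) \in \cL_u$. Existence of a minimiser is standard convex analysis. The heart of the step is to exclude the degenerate case $u^\cI(\alpha,\beta;\lambda^\ast) = \tilde\kappa(\lambda^\ast)$. When $\lambda^\ast$ lies in the interior of the delocalised set $\cL_u^c$, $u^\cI$ coincides with $\tilde\kappa$ on a neighbourhood, so $\lambda^\ast$ is simultaneously a local (hence, by convexity, a global) minimiser of $\lambda \mapsto \lambda/\mu + \tilde\kappa(\lambda)$; the second identity in \eqref{eqa2} then forces $\phi^\cI(\alpha,\beta;\mu) = \hat\kappa(\mu)$, contradicting $(\alpha,\beta,\mu) \in \cL_\phi$. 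The boundary case $\lambda^\ast \in \partial\cL_u$, where $u^\cI$ and $\tilde\kappa$ touch but can have different one-sided derivatives, I would handle by a short convex-subdifferential argument to the same effect. Uniqueness of $\lambda^\ast$ then follows from the strict convexity on $\cL_u$ given by Lemma \ref{assum2}.

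Next, I would apply the implicit function theorem to the stationarity equation $G(\alpha,\beta,\mu,\lambda^\ast) = 0$, where
\[
G(\alpha,\beta,\mu,\lambda) := \frac{1}{\mu} + \frac{\partial u^\cI}{\partial \lambda}(\alpha,\beta;\lambda).
\]
By Proposition \ref{uidiff}, $G$ is $C^\infty$ on the open subset of $\CONE \times [1,\infty) \times [0,\infty)$ defined by the condition $(\alpha,\beta,\lambda) \in \cL_u$, and by Lemma \ref{assum2},
\[
\frac{\partial G}{\partial\lambda}\bigg|_{\lambda = \lambda^\ast}
= \frac{\partial^2 u^\cI}{\partial\lambda^2}(\alpha,\beta;\lambda^\ast) > 0.
\]
The implicit function theorem therefore supplies a $C^\infty$ map $(\alpha,\beta,\mu) \mapsto \lambda^\ast(\alpha,\beta,\mu)$ on a neighbourhood of each point of $\cL_\phi$. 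Substituting back, $\phi^\cI(\alpha,\beta;\mu) = \lambda^\ast/\mu + u^\cI(\alpha,\beta;\lambda^\ast)$ becomes a composition of $C^\infty$ functions, proving the proposition.

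The main obstacle is the first step, specifically the boundary case in the argument that $\lambda^\ast \in \cL_u$. The structural reason things work is the convex duality between $u^\cI$ and $\phi^\cI(1/\cdot)$ from \eqref{inv1}, paired with the duality between $\tilde\kappa$ and $\hat\kappa(1/\cdot)$ from \eqref{eqa2}: together these convert the defining inequality $\phi^\cI > \hat\kappa$ of $\cL_\phi$ into the required interior condition $(\alpha,\beta,\lambda^\ast) \in \cL_u$ that triggers the implicit function theorem.
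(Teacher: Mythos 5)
Your overall route --- invert the Fenchel--Legendre duality (\ref{inv1}), show that the minimising $\lambda^\ast$ lies in $\cL_u$, and then apply the implicit function theorem to the stationarity equation --- is the paper's route too. But there is a genuine gap in the key step, showing $(\alpha,\beta,\lambda^\ast)\in\cL_u$: the boundary case is \emph{not} a ``short convex-subdifferential argument.'' The natural subdifferential inclusion at a touching point goes the wrong way for you. Since $\tilde\kappa\leq u^\cI$ with equality at $\lambda^\ast$, one gets $\partial\tilde\kappa(\lambda^\ast)\subset\partial u^\cI(\lambda^\ast)$, not the reverse; so from $-1/\mu\in\partial u^\cI(\lambda^\ast)$ you cannot conclude $-1/\mu\in\partial\tilde\kappa(\lambda^\ast)$ unless you additionally know that $u^\cI$ is differentiable at $\lambda^\ast$. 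Proposition \ref{uidiff} gives smoothness of $u^\cI$ only on the open set $\cL_u$, which is precisely what you are trying to establish for $\lambda^\ast$, so you cannot invoke it; and if $u^\cI$ had a kink at the boundary, your transfer of the optimality condition from $u^\cI$ to $\tilde\kappa$ would fail.

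What closes this gap in the paper is Lemma \ref{stconc}, which you never invoke. The strict concavity of $\rho\mapsto\phi^\cI(1/\rho)$ forces the supremum in (\ref{eqa}) evaluated at $\lambda(\mu)$ to be attained \emph{uniquely} at $\rho=1/\mu$. This yields a uniform contradiction argument that does not need to split into interior and boundary cases: if $u^\cI(\lambda(\mu))=\tilde\kappa(\lambda(\mu))$, then since $\phi^\cI(\mu)>\hat\kappa(\mu)$ the supremum defining $\tilde\kappa(\lambda(\mu))$ in (\ref{eqa2}) cannot be attained at $\rho=1/\mu$, so it is attained at some $\rho_1\neq 1/\mu$; but then $\hat\kappa\leq\phi^\cI$ and the uniqueness of the maximiser for $u^\cI$ give $\tilde\kappa(\lambda(\mu)) \leq -\lambda(\mu)\rho_1+\phi^\cI(1/\rho_1) < u^\cI(\lambda(\mu))$, a contradiction. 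The paper also combines Lemma \ref{assum2} with strict convexity of $\tilde\kappa$ to get strict convexity of $u^\cI$ on all of $[0,\infty)$, so that uniqueness of $\lambda(\mu)$ is obtained up front rather than only after the fact. You should bring Lemma \ref{stconc} into your first step and use the uniqueness of the dual maximiser as the central point; your interior-case argument then becomes redundant.
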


\bpr
Let $(\alpha,\beta,\mu)\in\cL_\phi$. Lemma \ref{assum2} states the strict convexity of
$\lambda\mapsto u^\cI(\lambda)$ on $\{\lambda : u(\lambda)>\tilde{\kappa}(\lambda)\}$
and it can be shown that $\lambda \mapsto \tilde{\kappa}(\lambda)$ is strictly convex
on $[0,\infty)$. This entails that $\lambda\mapsto u^\cI(\lambda)$ is strictly convex
on $[0,\infty)$. Therefore, the variational formula in (\ref{inv1}) attains its maximum
at a unique point $\lambda(\mu)\geq 0$, so that the variational formula in \eqref{eqa}
allows us to write
\begin{equation}
\label{eqimpo}
\phi^\cI(\mu)=\lambda(\mu)/\mu+\sup_{\rho\in (0,1]}\{-\lambda(\mu)\rho+\phi^\cI(1/\rho)\},
\end{equation}
after which the strict concavity of $\rho\mapsto \phi^\cI(1/\rho)$ (recall Lemma
\ref{stconc}) implies that this supremum is attained uniquely at $\rho=1/\mu$. Since
$\phi^\cI(\rho)\geq\hat{\kappa}(\rho)$ for all $\rho$, and $\phi^\cI(\mu)>\hat{\kappa}
(\mu)$, the variational formula in \eqref{eqa2} allows us to write $u^\cI(\lambda(\mu))>
\tilde{\kappa}(\lambda(\mu))$, and therefore $(\alpha,\beta,\lambda(\mu))\in
\mathcal{L}_u$.

Next, let
\begin{equation}
\cS =\left\{ (\alpha,\beta,\mu,\lambda)\in \CONE \times [1,\infty) \times [0,\infty)
\colon (\alpha,\beta,\mu)\in\cL_\phi, (\alpha,\beta,\lambda)\in\cL_u\right\},
\end{equation}
and define $\Upsilon_1$ as
\be{ups3}
\Upsilon_1\colon\,(\alpha,\beta,\mu,\lambda)\in \cS\mapsto
\frac{\partial (\lambda/\mu+u^\cI(\lambda))}{\partial\lambda}.
\ee
We want to apply the implicit function theorem in Bredon \cite{Bred}, Chapter II, Theorem 1.5,
to $\Upsilon_1$. This requires checking three properties:
\begin{itemize}
\item[(i)]
$\Upsilon_1$ is infinitely differentiable on $\cS$.
\item[(ii)]
For all $(\alpha,\beta,\mu) \in \cL_\phi$, $\lambda(\mu)$ is the unique $\lambda\in [1,\infty)$
such that $(\alpha,\beta,\lambda)\in \cL_u$ and $\Upsilon_1(\alpha,\beta,\mu,\lambda(\mu))$
$=0$.
\item[(iii)]
For all $(\alpha,\beta,\mu) \in \cL_\phi$, $\frac{\partial \Upsilon_1}{\partial\lambda}
(\alpha,\beta,\mu,\lambda(\mu))\neq 0$.
\end{itemize}
Property (i) holds because $u^\cI$ is infinitely differentiable on $\cL_u$ (by
Proposition \ref{uidiff}). Property (ii) holds because $\lambda\mapsto u^\cI(\lambda)$
is strictly convex (by Lemma \ref{assum2}). Moreover, Lemma \ref{assum2} gives that
\be{derpar}
\frac{\partial \Upsilon_1}{\partial\lambda}(\alpha,\beta,\mu,\lambda(\mu))
= \frac{\partial^2 u^\cI}{\partial \lambda^2}(\alpha,\beta,\lambda(\mu))>0,
\ee
so property (iii) holds too. We can therefore indeed use the implicit function theorem,
obtaining that $(\alpha,\beta,\mu)\mapsto\lambda(\mu)$ and $(\alpha,\beta,\mu) \mapsto
\phi^\cI(\alpha,\beta;\mu)$ are infinitely differentiable on $\cL_\phi$.
\epr

\br{Sub5.2}
Assumption {\rm \ref{uidiff}} can be weakened. Namely, instead of assuming finite curvature
of $\lambda\mapsto u(\alpha,\beta;\lambda)$, we may assume strict concavity of $\mu
\mapsto\mu \phi^\cI(\mu)$ (which is already known to be concave). This strict concavity
is implied by Assumption {\rm \ref{assum}}, Lemma {\rm \ref{equiRdef}} and \eqref{dersecphi},
and is sufficient to guarantee, in the proof of Proposition {\rm \ref{phicinf}}, that
$\lambda(\mu)$ in \eqref{eqimpo} is unique and satisfies $(\alpha,\beta,\lambda(\mu))
\in\cL_\mu$. This in turn is enough to carry out the rest of the proof.
\er

\subsection{Smoothness of $\psi_{AB}$ in its localized phase}
\label{S5.3}

In this section we transport the properties of $\phi^\cI$ obtained in Section
\ref{S5.2} to $\psi_{AB}$. We begin with some elementary observations. Fix
$(\alpha,\beta) \in \CONE$ and recall (\ref{DOMadef}). By Lemma \ref{stconc}
and Lemma \ref{l:ka}(ii), for all $a\geq 2$, $(c,b) \mapsto c \phi^{\cI}(c/b)$
and $(c,b)\mapsto (a-c) \kappa(a-c,1-b)$ are strictly concave on $\DOM(a)$.
Consequently, for all $a \geq 2$, the supremum of the variational formula in
\eqref{psiinflink} is attained at a unique pair $(c,b)\in\DOM(a)$ (use that
$\DOM(a)$ is a convex set).

Next, note that Lemma \ref{stconc} and Proposition \ref{phicinf} imply that
for all $(\alpha,\beta,\rho_0)\in\cL_\phi$ there exists a $C>0$ such that
\begin{equation}\label{dersecphi}
\tfrac{\partial^2}{\partial \rho^2}[\rho\phi^\cI(\rho)]
(\rho_0)=\tfrac{1}{\rho_0^3}\tfrac{\partial^2}{\partial \rho^2}
\big[\phi^\cI \big(1/\rho\big)\big]
\big(\tfrac{1}{\rho_0}\big) \leq -C.
\end{equation}

Let
\begin{equation}
\cL_{\psi} = \{(\alpha,\beta,a)\in \CONE \times [2,\infty)\colon\,
\psi_{AB}(\alpha,\beta;a)>\varpi\},
\end{equation}
i.e., the region where $\psi_{AB}$ is localized. Our main result in this section
is the following.

\bp{psiklsmooth}
$(\alpha,\beta,a)\mapsto\psi_{AB}(\alpha,\beta;a)$ is infinitely differentiable
on $\cL_{\psi}$.
\ep

\bpr
Define
\begin{equation}
\cL_{\alpha,\beta,a} = \{(c,b)\in\DOM(a)\colon\,
\phi^\cI(\alpha,\beta;c/b)>\hat{\kappa}(c/b)\}.
\end{equation}
As noted above, the variational formula in \eqref{psiinflink} attains its maximum at
a unique pair $(c(\alpha,\beta;a),b(\alpha,\beta;a))\in\DOM(a)$. We write $(c(a),b(a))$,
suppressing $(\alpha,\beta)$ from the notation. Since $(\alpha,\beta)\in\cL$ (recall
\eqref{DL}), Lemma \ref{l:kamu}(iv) and Proposition \ref{p:phtrinfchar} imply that
$(c(a),b(a))\in\cL_{\alpha,\beta,a}$. Let
\be{partialder}
F(c,b)=c\phi(c/b),\qquad \tilde{F}(c,b)=(a-c)\kappa(a-c,1-b),
\ee
and denote by $\{F_{c},F_b,F_{cc},F_{cb},F_{bb}\}$ the partial derivatives of
order $1$ and $2$ of $F$ with respect to the variables $c$ and $b$ (and similarly
for $\tilde{F}$). By the strict concavity of $(c,b)\mapsto F(c,b)+\tilde{F}(c,b)$
in $\DOM(a)$, we know that $(c(a),b(a))$ is also the unique pair in $\cL_{\alpha,\beta,a}$
at which $F_{c}+\tilde{F}_{c}=0$ and $F_{b}+\tilde{F}_{b}=0$.

We need to show that $(c(a),b(a))$ is infinitely differentiable w.r.t.\ $(\alpha,\beta,a)$.
To that aim we again use the implicit function theorem. Define
\be{subse}
\cR=\{(\alpha,\beta,a,c,b)\colon\,(\alpha,\beta,a)\in\cL_{\psi},\,
(c,b)\in\cL_{\alpha,\beta,a}\}
\ee
and
\be{eq18}
\Upsilon_2\colon\,(\alpha,\beta,a,c,b)\in \cR\mapsto
(F_{c}+\tilde{F}_{c},F_{b}+\tilde{F}_{b}).
\ee
Let $J_2$ be the Jacobian determinant of $\Upsilon_2$ as a function of $(c,b)$. Applying
the implicit function theorem to $\Upsilon_2$ requires checking three properties:
\begin{itemize}
\item[(i)]
$\Upsilon_2$ is infinitely differentiable on $\cR$.
\item[(ii)] For all $(\alpha,\beta,a) \in \cL_{\psi}$, $(c(a),b(a))$ is the
only pair in $\cL_{\alpha,\beta,a}$ satisfying $\Upsilon_2=0$.
\item[(iii)]
 For all $(\alpha,\beta,a) \in \cL_{\psi}$, $J_2 \neq 0$ in $(c(a),b(a))$.
\end{itemize}
As explained below (\ref{partialder}), property (ii) holds. Proposition \ref{phicinf}
and Lemma \ref{l:kamu}(ii) show that also property (i) holds. Computing the Jacobian
determinant $J_2$, we get
\be{eq19}
J_2=(F_{cc}+\tilde{F}_{cc})(F_{bb}+\tilde{F}_{bb})-(F_{cb}+\tilde{F}_{c,b})^2.
\ee
Since $F_{cc}F_{bb}-F_{cb}^2=0$, $F_{bb}=\mu^2 F_{cc}$ and $F_{cb}=\mu F_{cc}$,
\eqref{eq19} becomes
\be{eq20*}
J_2 = \tilde{F}_{cc}\tilde{F}_{bb}-\tilde{F}_{cb}^2+F_{cc}
[\tilde{F}_{bb}+2\mu\tilde{F}_{cb}+\mu^2 \tilde{F}_{cc}].
\ee
By the concavity of $c\mapsto F(c,b)$ and $c\mapsto \tilde{F}(c,b)$, we have
$F_{cc}\leq 0$ and $\tilde{F}_{cc}\leq 0$. Moreover, by the concavity of $(c,b)
\mapsto\tilde{F}(c,b)$, its Hessian matrix necessarily has two non-positive eigenvalues.
Therefore, the determinant of this matrix is non-negative, i.e., $\tilde{F}_{cc}\tilde{F}_{bb}
-\tilde{F}_{cb}^2\geq 0$. This, together with the inequality $\tilde{F}_{cc}
\leq 0$, implies that $\mu \mapsto \tilde{F}_{bb}+2\mu\tilde{F}_{cb}+ \mu^2 \tilde{F}_{cc}$
is non-positive on $\R$. Hence $J_2\geq 0$.

\bl{MAPLEstrconv}
$\tilde{F}_{cc}\tilde{F}_{bb}-\tilde{F}_{cb}^2 > 0$.
\el

\bpr
The strict inequality can be checked with MAPLE. In \cite{dHW06}, an explicit variational
formula is given for the entropy function in (\ref{kappa}), which is easily implemented.
\epr

\noindent
It follows from Lemma \ref{MAPLEstrconv} that $J_2>0$, which proves property (iii).
We know from Lemma \ref{l:ka}(ii) and Proposition \ref{phicinf} that $\tilde{F}$
and $F$ are infinitely differentiable on $\DOM(a)$ for all $a \in [2,\infty)$. Hence,
the claim indeed follows the implicit function theorem.
\epr

We close this section with the following observations needed in Section \ref{S5.4}.

\bl{l:c'} Fix $(\alpha,\beta) \in \CONE$.\\
(i)  For all $k,l\in\{A,B\}$, $a\mapsto\psi_{kl}(a)$ is strictly concave on
$[2,\infty)$.\\
(ii) For all $k,l\in\{A,B\}$ with $kl\neq BB$, $\lim_{a\to\infty} a\psi_{kl}(a)=\infty$.\\
$(iii)$  For all $k,l\in\{A,B\}$, $\lim_{a\to\infty}\partial
[a\psi_{kl}(a)]/\partial a\leq 0$.
\el

\bpr
(i) This is a straightforward consequence of the observations made at the beginning
of this section, together with the strict concavity of $\mu\mapsto\mu\phi^\cI(\mu)$
proved in Lemma \ref{stconc}.

\medskip\noindent
(ii) Because $\psi_{AB}\geq\psi_{AA}$, it suffices to consider $kl\in\{AA,BA\}$.
For $kl=AA$, the claim is immediate from Lemma \ref{l:ka}(iii) and (\ref{psiAABB}).
For $kl=BA$, we use the fact that $\phi^{\cI}(\mu)\geq\hat{\kappa}(\mu)$ (recall
(\ref{phiublb})) in combination with the variational formula of Lemma \ref{l:linkinf}
with $c=a-\frac32$ and $b=\frac12$. This gives
\be{liminfi}
a\psi_{BA}(a)\geq\tfrac12\,(2a-3)\,\hat\kappa(2a-3)+\tfrac32\,
\left[\kappa(\tfrac32,\tfrac12)+\tfrac12(\beta-\alpha)\right],
\ee
which yields the claim because $\mu\hat\kappa(\mu)\sim\log\mu$ as $\mu\to \infty$
by Lemma \ref{l:kamu}(iii).

\medskip\noindent
(iii) Since, for all $k,l\in \{A,B\}$, $\psi_{AB}\geq\psi_{kl}$ and $a\mapsto a\psi_{kl}(a)$
is concave, it suffices to prove that $\limsup_{a\to\infty}\psi_{AB}(a)\leq 0$. The latter
is immediate from the variational formula in (\ref{psiinflink}) and the fact that
$\lim_{a\to\infty}\phi^{\cI}(a)=0$ (Lemma \ref{acdellim}(i)) and $\lim_{a\to\infty}
\kappa(a,1)=0 ((\ref{ka}))$.
\epr

\subsection{Smoothness of $f$ on $\cL$}
\label{S5.4}

We begin by proving the uniqueness of the maximisers in the variational formula in
\eqref{fevar}. For $(\alpha,\beta)\in\CONE$, $p\in(0,1)$ and $(\rho_{kl})\in
\cR(p)$, let
(recall (\ref{fctV}))
\be{maxdefs}
\begin{aligned}
f_{(\rho_{kl})} &= \sup_{(a_{kl})\in \cA} V\big((\rho_{kl}),(a_{kl})\big),\\
\cO_{(\rho_{kl})} &= \{kl\in\{A,B\}^2\colon\,\rho_{kl}>0\},\\
\cR^f(p) &= \{(\rho_{kl})\in\cR(p)\colon\,f=f_{(\rho_{kl})}\},\\
\cP(p) &= \bigcup_{(\rho_{kl})\in \cR^f(p)} \cO_{(\rho_{kl})}.
\end{aligned}
\ee

\bp{p:unimaw}
(i) For every $(\alpha,\beta)\in\CONE$, $p\in (0,1)$ and $\rho=(\rho_{kl})\in\cR(p)$,
there exists a unique family $a^\rho=(a_{kl}^\rho)_{kl\in \cO_\rho}\in\cA$ satisfying
\be{maxi}
f_\rho=\frac{\sum_{kl\in\cO_\rho}
\rho_{kl} a^{\rho}_{kl} \psi_{kl}(a^{\rho}_{kl})}
{\sum_{kl\in\cO_\rho} \rho_{kl} a^{\rho}_{kl}}
= V(\rho,a^\rho).
\ee
(ii) For every $(\alpha,\beta)\in\CONE$ and $p\in(0,1)$, $\cR^f(p)\neq\emptyset$ and there
exists a unique family $(a^*_{kl})_{(k,l) \in\cP(p)}$ such that $a^\rho_{kl}=a^*_{kl}$ for
all $\rho\in\cR^f(p)$ and $kl\in\cO_\rho$.
\ep

\bpr
Recall Theorem \ref{feiden}.

\medskip\noindent
(i) The case $\rho_{BB}=1$ is trivial. In that case we have $f_\rho=\sup_{a_{BB}\geq 2}
\psi_{BB}(a_{BB}) = \psi_{BB}(a^*)=\frac12\beta+\varpi$ (by Lemma \ref{l:ka}(iv)), and
so $a^{\rho}_{BB}=a^*=\frac52$. Therefore assume that $\rho_{BB}<1$. Then at least one
pair $k_1l_1\in\{AA,AB,BA\}$ satisfies $\rho_{k_1l_1}>0$, and since $\lim_{u\to\infty}
u\psi_{k_1l_1}(u)=\infty$ by Lemma \ref{l:c'}(ii), we have $f_\rho>0$. The latter is needed
in what follows.

To prove existence of $a^\rho$, for $R>0$ let
\be{GrhoRdef}
f_{\rho,R} = \sup_{a\in [2,R]^{\cO_\rho}} V(\rho,a).
\ee
We prove that for $R$ large enough the supremum in \eqref{maxi} is attained in
$[2,R]^{\cO_{\rho}}$, i.e., $f_\rho=f_{\rho,R}$. Indeed, for $a\in\cA$, $\rho\in\cR(p)$
and $k_2l_2\in\{A,B\}^{2}$ we have (recall (\ref{fctV}))
\be{deriv}
\frac{\partial V}{\partial a_{k_2l_2}}(\rho,a)
= \frac{\rho_{k_2l_2}}{\sum_{kl} \rho_{kl} a_{kl}}
\Big\{\frac{\partial [u\psi_{k_2l_2}(u)]}{\partial u}|_{u=a_{k_2l_2}}-V(\rho,a)\Big\}.
\ee
Moreover, for every $kl\in\{A,B\}^{2}$, $u\mapsto u\psi_{kl}(u)$ is strictly concave and
$u\mapsto\partial[u\psi_{kl}(u)]/\partial u$ is strictly decreasing (by Lemma
\ref{l:c'}(i)) and converges to a limit $\leq 0$ as $u\to\infty$ (by Lemma \ref{l:c'}(iii)).
Pick $R>0$ large enough so that $\partial[u\psi_{kl}(u)]/\partial u \leq f_\rho/2$ for all
$u\geq R$ and $kl\in\{A,B\}^2$. We will show that $f_\rho>f_{\rho,R}$ implies that
$V(\rho,a)\leq\max\{f_\rho/2,f_{\rho,R}\}$ for all $a\in \cA\setminus [2,R]^{\cO_\rho}$,
and this will provide a contradiction.

To achieve the latter, assume that $AA\in\cO_\rho$ and consider, for instance, $a\in\cA$
such that $a_{AA}>R$ and $a_{kl}\leq R$ for $kl\in\cO_\rho\setminus\{AA\}$. Fix $x\geq R$
and denote by $a^x$ the element of $\cO_\rho$ given by $a_{AA}^{x}=x$ and $a^{x}_{kl}=a_{kl}$,
$kl\in\cO_\rho\backslash\{AA\}$. Since $a^{R}\in [2,R]^{\cO_\rho}$, we have $V(\rho,a^R)
\leq f_{\rho,R}<f_\rho$ and
\be{inte}
V(\rho,a^x)-V(\rho,a^R)=\int_{R}^{x} \frac{\partial V}{\partial a_{AA}}(\rho,a^{u})\,du.
\ee
Since, by \eqref{deriv}, the sign of $(\partial V/\partial a_{AA})(\rho,a^{u})$ is equal
to the sign of $\partial [u\psi_{AA}(u)]/\partial u -V(\rho,a^u)$, it follows that
$V(\rho,a^x)$ decreases with $x$ whenever $V(\rho,a^x)\geq f_\rho/2$. Since $V(\rho,a^R)
<f_\rho$, we therefore have $V(\rho,a^x)\leq \max\{f_\rho/2,f_{\rho,R}\}$ for all
$x\geq R$ and, consequently, $V(\rho,a) \leq \max\{f_\rho/2, f_{\rho,R}\}$. Therefore
the supremum of \eqref{maxi} is attained in $[2,R]^{\cO_\rho}$.

The uniqueness of $a^\rho$ realising $f_\rho=V(\rho,a^{\rho})$ follows from \eqref{deriv},
because for each $kl\in\{A,B\}^{\cO_\rho}$ we must have $(\partial V/\partial a_{kl})
(\rho,a^{\rho})=0$. This means that for each $kl\in\cO_\rho$ we must have
\be{muha}
\frac{\partial [u \psi_{kl}(u)]}{\partial u}|_{u=a_{kl}^\rho}
= V(\rho,a^\rho) = \sup_{a\in\cA} V(\rho,a),
\ee
and, since $u\mapsto u \psi_{kl}(u)$ is strictly concave (by Lemma \ref{l:c'}(i)),
there is only one such $a_{kl}$ for each $kl\in\cO_\rho$.

\medskip\noindent
(ii) As shown in \cite{dHW06}, Proposition 3.2.1, $\rho\mapsto f_\rho$ is continuous
on $\cR(p)$. Therefore, the compactness of $\cR(p)$ entails $\cR^f(p)\neq \emptyset$.
Consider $(\rho_1,\rho_2)\in\cR^f(p)$ and $kl\in\cO_{\rho_1}\cap\cO_{\rho_2}$. Then
(\ref{deriv}) also gives
\be{algi}
\frac{\partial [u\psi_{kl}(u)]}{\partial u}|_{u=a_{kl}^{\rho_1}}
= f = \frac{\partial [u\psi_{kl}(u)]}{\partial u}|_{u=a_{kl}^{\rho_2}},
\ee
which, by the strict concavity of $u\mapsto u\psi_{kl}(u)$, implies that $a_{kl}^{\rho_1}
=a_{kl}^{\rho_2}$.
\epr

We are now ready to prove the smoothness of $f$ on $\cL$. Because of the inequalities
$\psi_{AA}\geq \psi_{BB}$ and $\psi_{AB}\geq \psi_{BA}$, the concavity of $a\mapsto a
\psi_{AA}(a)$ and $a\mapsto a \psi_{AB}(a)$ implies that the variational problem in
\eqref{fevar} reduces to the matrices $\{M_\gamma,\gamma\in C\}$, with $M_\gamma$
the matrix and $C$ the set defined in \eqref{Rpa}. Write $V(\gamma,a_{AB},a_{AA})$
for the quantity $V(M_\gamma,(a_{AB},a_{AA},0,0))$ defined in \eqref{fctV}, put
$\gamma^*=\max C$ and let $(x^*(\alpha,\beta),y^*(\alpha,\beta))$ be the unique maximisers
$(a_{AB}^*,a_{AA}^*)$ defined in Proposition \ref{p:unimaw}. By differentiating
the quantity $V(\gamma,x^*,y^*)$ with respect to $\gamma$, we easily get that
$\cR^f(p)$ contains only the matrix $M_{\gamma^*}$. Thus, we have
the equality
\be{fe}
f(\alpha,\beta)=V(\gamma^*,x^*,y^*)=\frac{\gamma^* x^*
\psi_{AB}(x^*)+(1-\gamma^*) y^* \kappa(y^*,1)}{\gamma^* x^*+(1-\gamma^*) y^*}.
\ee
Since $(\alpha,\beta)\in \cL$, we have $\psi_{AB}(x^*)>\varpi$ and therefore
$(\alpha,\beta,x^*)\in \cL_{\psi}$. To show that $f$ is infinitely differentiable on
$\cL$, we once more use the implicit function theorem. For that we define
\be{subse2}
\mathcal{N}=\{(\alpha,\beta,x,y)\colon\,(\alpha,\beta)\in\cL,\,
(\alpha,\beta,x)\in\cL_{\psi}, y> 2\}
\ee
and
\be{eq20**}
\Upsilon_3\colon\,(\alpha,\beta,x,y)\in\cN \mapsto
\Big(\frac{\partial V}{\partial x}(\gamma^*,x,y),
\frac{\partial V}{\partial y}(\gamma^*,x,y)\Big).
\ee
Let $J_3$ be the Jacobian determinant of $\Upsilon_3$ as a function of
$(\alpha,\beta,x,y)$. To apply the implicit  function theorem we must check three properties:
\begin{itemize}
\item[(i)]
$\Upsilon_3$ is infinitely differentiable on $\cN$.
\item[(ii)]
For all $(\alpha,\beta) \in \cL$, $(x^*,y^*)$ is the only pair in
$[2,\infty)^2$ satisfying
$(\alpha,\beta,x,y)\in \cN$
and $\Upsilon_3(\alpha,\beta,x,y)=0$.
\item[(iii)]
For all $(\alpha,\beta) \in \cL$, $J_3 \neq 0$ in $(\alpha,\beta,x^*,y^*)$.
\end{itemize}
It follows from Lemma \ref{l:ka}(ii), Proposition \ref{psiklsmooth} and (\ref{fe})
that property (i) and (ii) hold. To get property (iii), abbreviate $x \psi_{AB}(x)
=\psi(x)$, $y \kappa(y,1)=\kappa(y)$. From Lemma \ref{l:ka}(ii) and Proposition
\ref{psiklsmooth}, we know that $\psi$ and $\kappa$ are infinitely differentiable.
By (\ref{eq20**}),
\be{21}
J_3=\frac{\partial^2 V}{\partial x^2}
\frac{\partial^2 V}{\partial y^2}-\Big(\frac{\partial^2 V}{\partial x\partial y}\Big)^2.
\ee
Taking into account that $(\partial V/\partial x)(x^*,y^*)=(\partial V/\partial y)
(x^*,y^*)=0$, we deduce from (\ref{fe}) that $\psi'(x^*)=\kappa'(y^*)$  and
$J_3=c^* \psi''(x^*) \kappa''(y^*)$, where $c^*>0$ is a constant depending on
$(x^*,y^*)$. We already know from Lemma \ref{l:ka}(iii) that $\kappa''(y^*)<0$.

\bl{LGstrconv}
$\psi''(x^*)<0$.
\el

\bpr
For $x>2$ satisfying $(\alpha,\beta,x)\in\cL_\psi$, we will show that $(x\psi_{AB}(x))''<0$.
For this it suffices to show
that there exists a $C>0$ such that, for $\delta$ small enough,
\begin{equation}
\label{equi2}
T(\delta)=\tfrac12\big[(x+\delta)\psi_{AB}(x+\delta)
+(x-\delta)\psi_{AB}\big(x-\delta\big)-2x\psi_{AB}(x)\big]
\leq -C \delta^2.
\end{equation}
Set $x_{-\delta}=x-\delta$ and $x_{\delta}=x+\delta$, and let $(e_{-\delta},b_{-\delta})$ and $(e_{\delta},b_{\delta})$ be the
unique maximisers of \eqref{psiinflink} at $x_{-\delta}$ and $x_{\delta}$. Pick $(c,b)=(\frac12(e_{-\delta}+e_{\delta}),
\frac12(b_{-\delta}+b_{\delta}))$ in \eqref{psiinflink}. Since $x=\frac12(x_{-\delta}+x_{\delta})$, we obtain $T(\delta)\leq
V_1(\delta)+V_2(\delta)$ with
\begin{equation}
\label{equi2*}
\begin{aligned}
V_1(\delta) &=\tfrac12\left[e_{-\delta}\phi^\cI(\tfrac{e_{-\delta}}{b_{-\delta}})+e_{\delta}
\phi^\cI(\tfrac{e_{\delta}}{b_{\delta}})
-(e_{-\delta}+e_{\delta})\phi^\cI\left(\tfrac{e_{-\delta}+e_{\delta}}{b_{-\delta}+b_{\delta}}\right)\right]\\
V_2(\delta) &= (x_{-\delta}-e_{-\delta})\,\kappa(x_{-\delta}-e_{-\delta},1-b_{-\delta})+(x_{\delta}-e_{\delta})\,
\kappa(x_{\delta}-e_{\delta},1-b_{\delta})\\
&\qquad -(x_{-\delta}+x_{\delta}-e_{-\delta}-e_{\delta})\,\kappa\left(\tfrac12(x_{-\delta}+x_{\delta}-e_{-\delta}-e_{\delta}),
1-\tfrac12(b_{-\delta}+b_{\delta})\right).
\end{aligned}
\end{equation}
\bl{jackap}
The determinant of the Jacobian matrix of $(a,b)\mapsto a\kappa(a,b)$ is strictly positive everywhere on $\DOM$.
\el
\bpr
The non-negativity of the Jacobian determinant is a consequence of the concavity of
$(a,b)\mapsto a\kappa(a,b)$ (recall Lemma \ref{l:ka}(ii)). The strict positivity can be checked with MAPLE
via the explicit expression $\kappa(a,b)$ given in den Hollander and Whittington \cite{dHW06}.
\epr

Since $(a,b)\mapsto a\kappa(a,b)$ is concave and twice
differentiable, Lemma \ref{jackap} allows us to assert that on $\DOM$
the Jacobian matrix of $(a,b)\mapsto a\kappa(a,b)$ has two strictly negative eigenvalues.
The second derivatives of $\kappa$ are continuous. Moreover,
the uniqueness of $(e_{-\delta},b_{-\delta})$ and $(e_{\delta},b_{\delta})$
imply their continuity in $\delta$, and so
there exists a $\widetilde{C}>0$ such that, for $\delta$ small enough,
\be{major1}
V_2(\delta)\leq -\widetilde{C}
\big[\big((x_{-\delta}-x_{\delta})-(e_{-\delta}-e_{\delta})\big)^2+(b_{-\delta}-b_{\delta})^2\big].
\ee
In what follows, we set $Y(\frac{e}{b})=
(\partial^2/\partial^2 \mu)[\mu\phi^\cI(\mu)](\frac{e}{b})$.
To bound $V_1(\delta)$ from above, we compute the Jacobian matrix of
$(e,b)\mapsto e\phi^\cI(e/b)$:
$$\tfrac{1}{b}\, Y(\tfrac{e}{b}) \left(%
\begin{array}{cc}
1 & -\tfrac{e}{b}\\
-\tfrac{e}{b} & \tfrac{e^2}{b^2}\\
\end{array}%
\right).$$
Thus, if for $t\in[0,1]$ and $u\in [0,t]$ we set $e_{u,t}=\tfrac{e_{-\delta}+e_{\delta}}{2}+t(u-\tfrac12)
(e_{-\delta}-e_{\delta})$ and
$b_{u,t}=\tfrac{b_{-\delta}+b_{\delta}}{2}+t(u-\tfrac12)(b_{-\delta}-b_{\delta})$, then a
Taylor expansion gives us
\be{major2}
V_1(\delta)=\tfrac14 \int_0^1 dt\, t\int_0^t du\, \tfrac{1}{b_{u,t}}
Y\big(\tfrac{e_{u,t}}{b_{u,t}}\big)
\big[(e_{-\delta}-e_{\delta})-\tfrac{e_{u,t}}{b_{u,t}} (b_{-\delta}-b_{\delta})\big]^2.
\ee
As explained in the proof of Proposition \ref{psiklsmooth}, the fact that $(\alpha,\beta,x)\in \cL_\psi$
implies $(e_0,b_0)\in\cL_{\alpha,\beta,x}$ and therefore
$(\alpha,\beta,\tfrac{e_0}{b_0})\in\cL_\phi$. Moreover,
$\cL_\phi$ is an open subset of $\CONE \times [1,\infty)$
and
$(e_{\delta},b_{\delta})$ is continuous in
$\delta$, so that for $\delta$ small enough, $t\in[0,1]$ and $u\in[0,t]$, we have
$(\alpha,\beta,\tfrac{e_{u,t}}{b_{u,t}})\in \cL_\phi$. This implies,
by Lemma \ref{stconc} and by the continuity of the second derivative of $\phi^\cI$ on $\cL_\phi$, that
there exists a $\widehat{C}>0$ such that, for $\delta$ small enough,
$\tfrac{1}{b_{u,t}} Y(\tfrac{e_{u,t}}{b_{u,t}})\leq -\widehat{C}$.
At this stage, we need to consider the following three cases:

\noindent[Case 1] $|b_{-\delta}-b_{\delta}|\geq \tfrac{b_0}{e_0}\tfrac{\delta}{4}$. Then,
\eqref{major1} gives
$V_2(\delta)\leq -\tfrac{\widetilde{C} b_0^2}{4^2 e_0^2} \delta^2$.

\noindent[Case 2] $|e_{-\delta}-e_{\delta}|\leq \delta$.
Then, since $x_{\delta}-x_{-\delta}=2\delta$,
\eqref{major1} gives
$V_2(\delta)\leq -\widetilde{C} \delta^2$.

\noindent[Case 3] $|e_{-\delta}-e_{\delta}|> \delta$ and
$|b_{-\delta}-b_{\delta}|< \tfrac{b_0}{e_0}\tfrac{\delta}{4}$.
By continuity of $e_\delta$ and $b_\delta$,
$\tfrac{e_{u,t}}{b_{u,t}}\leq \tfrac{2e_0}{b_0}$ for $\delta$ small enough and therefore
\be{major3}
|(e_{-\delta}-e_{\delta})-\tfrac{e_{u,t}}{b_{u,t}} (b_{-\delta}-b_{\delta})|
\geq |e_{-\delta}-e_{\delta}|-\tfrac{2e_0}{b_0} |b_{-\delta}-b_{\delta}|
\geq \delta-\tfrac{2e_0}{b_0} \tfrac{b_0}{e_0}\tfrac{\delta}{4}=\tfrac{\delta}{2}.
\ee
Thus, \eqref{major2} and \eqref{major3}  give
$V_1(\delta)\leq -\tfrac{\widehat{C}}{48}\delta^2$.

We conclude by setting
$C=\min\{\tfrac{\widetilde{C} b_0^2}{4^2 e_0^2},\widetilde{C},\tfrac{\widehat{C}}{48}\}$, so that
Cases 1,2
and 3 give $T(\delta)\leq -C\delta^2$ for $\delta$ small enough, which proves \eqref{equi2}.
\epr

\noindent
Lemma \ref{LGstrconv} implies that $J_3>0$. Hence, the implicit function theorem can indeed
be applied to (\ref{fe}), and it follows that $f$ is infinitely differentiable on $\cL$.


\end{document}